\newtheorem{thm}{Theorem}[section]
\newtheorem{cor}[thm]{Corollary}
\newtheorem{lemma}[thm]{Lemma}
\newtheorem{prop}[thm]{Proposition}
\newtheorem{defn}[thm]{Definition}
\newtheorem{claim}[thm]{Claim}
\newtheorem{rem}[thm]{Remark}
\renewcommand{\proofname}{Proof}
\def\Sing{\operatorname{Sing}}
\def\codim{\operatorname{codim}}
\def\min{\operatorname{min}}
\def\Im{\operatorname{Im}}
\def\rk{\operatorname{rk}}
\def\max{\operatorname{max}}
\def\length{\operatorname{length}}
\def\c1{\operatorname{c_1}}
\def\c2{\operatorname{c_2}}
\def\Cliff{\operatorname{Cliff}}
\def\gon{\operatorname{gon}}
\def\QQ{{\mathbb Q}}
\def\PP{{\mathbb P}}
\def\L{{\mathcal L}}
\def\R{{\mathcal R}}
\def\N{{\mathcal N}}
\def\O{{\mathcal O}}
\def\I{{\mathcal J}}
\def\G{{\mathcal G}}
\def\E{{\mathcal E}}
\def\F{{\mathcal F}}
\def\K{{\mathcal K}}
\def\J{{\mathfrak J}}
\def\*{\otimes}
\def\x{\times}                  % product (fiber)
\def\iso{\cong}
\def\eqv{\equiv}
\def\sub{\subseteq}
\def\sup{\supseteq}
\def\+{\oplus}                   % direct sum
\def\*{\otimes}                  % tensor product
\def\hpil{\longrightarrow}       % ----->
\def\khpil{\rightarrow}
\def\Pic{\operatorname{Pic}}
\def\Supp{\operatorname{Supp}}
\def\det{\operatorname{det}}
\def\Bs{\operatorname{Bs}}
\def\hs{\hspace{.1in}}
\begin{document}

\title[Brill-Noether theory of curves on Enriques surfaces, II. The Clifford index]{Brill-Noether theory
of curves on Enriques surfaces, II. The Clifford index}

\author[A.L. Knutsen and A.F. Lopez]{Andreas Leopold Knutsen* and Angelo Felice Lopez**}

\address{\hskip -.43cm Andreas Leopold Knutsen, Department of Mathematics, University of Bergen,
Johannes Brunsgate 12, 5008 Bergen, Norway. e-mail {\tt andreas.knutsen@math.uib.no}}

\thanks{* Research partially supported by a Marie Curie Intra-European Fellowship within the 6th European
Community Framework Programme}

\address{\hskip -.43cm Angelo Felice Lopez, Dipartimento di Matematica, Universit\`a di Roma 
Tre, Largo San Leonardo Murialdo 1, 00146, Roma, Italy. e-mail {\tt lopez@mat.uniroma3.it}}

\thanks{** Research partially supported by the MIUR national project ``Geometria delle variet\`a algebriche e dei loro spazi di moduli".}

\thanks{{\it 2000 Mathematics Subject Classification} : Primary 14H45, 14J28. Secondary 14H51, 14C20}

\begin{abstract}
We complete our study of linear series on curves lying on an Enriques surface
by showing that, with the exception of smooth plane quintics, there are no 
exceptional curves on Enriques surfaces, that is, curves for which the Clifford index is not computed by a pencil.
\end{abstract}

\maketitle

\section{Introduction}
\label{intro}

On a smooth irreducible curve $C$ of genus at least $4$ there are two very important and much studied invariants, a classical one, the gonality, $\gon C$, and a modern one, the Clifford index, $\Cliff C$. Their importance ranges from projective geometrical to moduli properties and tells a lot about the curve itself, for example, when $C$ is not hyperelliptic, about the syzygies of its ideal in the canonical embedding.

After the work of Coppens and Martens \cite{cm} we know that there is a relation between these invariants
\[ \gon C -3 \leq \Cliff C \leq \gon C -2 \]
and one would like to know what are the properties of curves realizing one of the two equalities. As it turns out, for the general curve one has $\Cliff C = \gon C -2$, while curves for which $\Cliff C=\gon C -3$, called {\it exceptional curves}, are conjectured to be extremely rare \cite{elms}. As a matter of fact, aside for smooth plane curves, very few cases of exceptional curves are known, almost all lying on $K3$ surfaces \cite{elms}.

The starting idea of this work was that, given the flexibility and richness of the Picard group of Enriques surfaces, we should investigate if there are exceptional curves lying on them. One such case was already known, the one of smooth plane quintics \cite{sta, umeq}.

The main result of this note is that, in fact, the above are the only examples:

\begin{thm} 
\label{thm:noexc}
On an Enriques surface there are no exceptional curves other than smooth plane quintics.

In particular, for any smooth curve $C$ on an Enriques surface $S$ such that $C^2 \neq 10$, we have $\Cliff C = \gon C-2$.
\end{thm}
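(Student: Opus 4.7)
The plan is to argue by contradiction: suppose $C$ is an exceptional curve on an Enriques surface $S$, so that $\Cliff C = \gon C - 3$, and show that $C$ must be a smooth plane quintic. By adjunction, since $K_S$ is numerically trivial on $S$, we have $C^2 = 2g-2$ where $g = g(C)$. Let $A$ be a line bundle on $C$ with $\Cliff A = \Cliff C$. Then $h^0(A) \geq 3$: otherwise $h^0(A) = 2$ would give $\Cliff A = \deg A - 2 \geq \gon C - 2 > \Cliff C$. By Serre duality, the residual $K_C \otimes A\v$ also computes $\Cliff C$, so we may also arrange $h^1(A) \geq 3$; thus $A$ is a $g^r_d$ with $r \geq 2$ and $h^1(A) \geq 3$.

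The key input is the structure theorem from Part I of this series: any line bundle $A$ on $C$ computing $\Cliff C$ with both $h^0(A) \geq 2$ and $h^1(A) \geq 2$ is induced (up to small adjustments) from a globally defined line bundle $L$ on $S$, subject to sharp numerical constraints on $L^2$ and $L \cdot C$. This reduces Theorem \ref{thm:noexc} to a finite case analysis parameterised by the numerical class of $L$.

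For each admissible class one combines the Hodge index theorem on $S$, Reider-type arguments, and the gonality classification from Part I to compute $\Cliff(L|_C)$ together with a lower bound for $\gon C$. Each class is then either incompatible with the exceptionality equation $\Cliff(L|_C) = \gon C - 3$, or yields the single surviving configuration $g = 6$, $C^2 = 10$, with $L|_C$ being a $g^2_5$, which identifies $C$ as a smooth plane quintic. The \emph{in particular} assertion follows immediately: a smooth plane quintic has $g = 6$ and hence $C^2 = 10$, so if $C^2 \neq 10$ then $C$ is not exceptional, and the Coppens--Martens inequalities force $\Cliff C = \gon C - 2$.

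The main obstacle is the case analysis: one must bound $L^2$ and $L\cdot C$, and then dispose of each potential $[L] \in \Num(S)$. The presence of $(-2)$-curves on $S$ complicates things, as they can lower the gonality in ways that must be accounted for before ruling the class out; the Enriques situation is genuinely more delicate than its $K3$ analogue, owing both to the $2$-torsion in $K_S$ and to the extra involution coming from the $K3$ double cover $\tilde{S} \to S$, which can produce non-trivial identifications among the candidate classes for $L$ that one must handle carefully.
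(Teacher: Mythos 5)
Your reduction hinges on a ``structure theorem from Part I'' asserting that any line bundle $A$ computing $\Cliff C$ with $h^0(A)\geq 2$ and $h^1(A)\geq 2$ is induced from a globally defined line bundle on $S$. No such theorem exists in \cite{kl1}, and this is precisely the difficulty the present paper is written to overcome: the results of that type in Part I (notably Prop.~3.1 there) apply only to \emph{pencils}, because they rest on rank-two Lazarsfeld--Mukai bundles and Bogomolov-type instability. For an exceptional curve the bundle $\E(C,A)$ has rank $h^0(A)\geq 3$, no instability criterion is available, and one cannot a priori produce a line bundle $L$ on $S$ with $\Cliff(L_{|C})=\Cliff C$. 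Consequently the ``finite case analysis parameterised by the numerical class of $L$'' to which you defer all the work is not set up by anything you have established; you have assumed the hard part.

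What the paper actually does is stratify by the Clifford dimension $r$ and import the classification of \cite{elms} and \cite{ma} for $r\leq 9$ (smooth plane curves for $r=2$, complete intersections of two cubics for $r=3$, and $\omega_C\sim 2A$, $g=4r-2$ for $4\leq r\leq 9$), which pins down $C^2$, $\phi(C)$ and $\deg A$ numerically before any surface geometry is invoked. The surface-side input is then obtained not by a descent theorem but by filtering $\E(C,A)$ by line subbundles coming from low-degree effective divisors on $S$ and analysing the rank-one-lower quotient (Lemmas~\ref{useful3} and~\ref{bigger0} and Proposition~\ref{prop:bassoc}); and even this machinery is insufficient for $r=3$, where a genuinely geometric analysis of the image of $\varphi_A$ in $\PP^3$ (cubic and quartic del Pezzo surfaces, their lines and pencils of conics) is required. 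Your sketch contains none of these mechanisms, so the gap is not a matter of omitted routine detail: the step ``$A$ is induced from $L\in\Pic S$'' is unjustified and, as far as is known, cannot be justified without essentially reproving the theorem. Only your final ``in particular'' deduction from the main assertion is complete as written.
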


This result gives more evidence for the conjecture in \cite{elms}.

We remark that similar results were proved for curves on del Pezzo and $K3$ surfaces by the first author in \cite{kn2} and \cite{kn3}.

Now in \cite{kl1} we computed the gonality of a general smooth curve $C$ in a linear system $|L|$ on an Enriques surface $S$. Recalling the two functions \cite{cd}, \cite[Def.1.1 and 1.2]{kl1}
\[ \phi(L) :=\inf \{|F.L| \; : \; F \in \Pic S, F^2 = 0, F \not\eqv 0\}\] 
\[\mu(L) = \min\{B.L - 2 \ : \ B \in \Pic(S) \ \hbox{with} \ B^2 = 4, \phi(B) = 2, B \not\eqv L \}, \] 
as an immediate consequence of Theorem \ref{thm:noexc} and \cite[Thm.1.3 and Prop.4.13]{kl1}, we are now able to compute the Clifford index of $C$:

\begin{cor} 
\label{cliff}
Let $|L|$ be a base-component free linear system on an Enriques surface with $L^2 \geq 6$ and let $C$ be a general curve in $|L|$. Then
\[  \Cliff C = \min\{2 \phi(L)-2, \mu(L)-2, \lfloor \frac{L^2}{4} \rfloor  \}. \]
\end{cor}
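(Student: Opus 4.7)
The plan is to combine the Coppens--Martens inequality $\gon C -3 \leq \Cliff C \leq \gon C -2$ with Theorem \ref{thm:noexc} and the gonality computation from \cite[Thm.1.3]{kl1}. Theorem \ref{thm:noexc} asserts that $\Cliff C = \gon C - 2$ whenever $C^2 \neq 10$, since in that range the general $C$ cannot be a smooth plane quintic and hence is not exceptional. This reduces the corollary, outside of the case $L^2 = 10$, to a purely formal manipulation.

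Concretely, I would first invoke \cite[Thm.1.3]{kl1}, which computes $\gon C$ for a general $C \in |L|$ as the minimum of the three quantities $2\phi(L)$, $\mu(L)$ and $\lfloor L^2/4 \rfloor + 2$; these are exactly the three terms appearing in the corollary's minimum, shifted by $+2$. When $L^2 \neq 10$, subtracting $2$ from each term and applying $\Cliff C = \gon C -2$ from Theorem \ref{thm:noexc} yields the desired formula at once.

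The genuinely delicate case is $L^2 = 10$. Here the general curve in $|L|$ may be a smooth plane quintic, for which $\Cliff C = 1$ and $\gon C = 4$, so the naive subtraction $\Cliff C = \gon C - 2$ fails. This is the role of \cite[Prop.4.13]{kl1}: that proposition classifies exactly which base-component-free $L$ with $L^2 = 10$ have a smooth plane quintic as general section, and pins down $\phi(L)$ and $\mu(L)$ precisely enough that one can check directly that $\min\{2\phi(L)-2,\mu(L)-2,2\} = 1$ in those cases. For the remaining $L$ with $L^2=10$ (whose general section is not a plane quintic, hence not exceptional), Theorem \ref{thm:noexc} still gives $\Cliff C = \gon C -2$ and the previous paragraph applies without change.

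The only place where a nontrivial computation is hidden is thus the matching of the three-term minimum against the Clifford index $1$ in the plane-quintic case, which is supplied by \cite[Prop.4.13]{kl1}. Everything else in the corollary is a formal consequence of the gonality formula via the identity $\Cliff C = \gon C - 2$ furnished by Theorem \ref{thm:noexc}.
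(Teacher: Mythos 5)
Your overall route is exactly the paper's: the corollary is stated there as an immediate consequence of Theorem \ref{thm:noexc}, the gonality formula of \cite[Thm.1.3]{kl1}, and \cite[Prop.4.13]{kl1}, and your reduction via $\Cliff C = \gon C - 2$ and the three-term gonality minimum is the intended argument. The one point to correct is your reading of \cite[Prop.4.13]{kl1} in the case $L^2 = 10$: that proposition does not classify the linear systems whose general member is a smooth plane quintic and then leave you a numerical check of $\min\{2\phi(L)-2,\mu(L)-2,2\}=1$; it shows that a smooth plane quintic is \emph{never} general in its linear system on an Enriques surface. Hence for a general $C \in |L|$ with $L^2 = 10$ the curve is not a plane quintic, so by Theorem \ref{thm:noexc} it is not exceptional, $\Cliff C = \gon C - 2$ holds there too, and no separate verification against the value $1$ is needed (or indeed possible, since the set of such $L$ is empty). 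With that adjustment your argument is complete and coincides with the paper's.
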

As a matter of fact, by Theorem \ref{thm:noexc} and \cite[Cor.1.5]{kl1}, the cases when the Clifford index is not $2 \phi(L)-2$ are completely characterized.

We point out that this is particularly important for us in the study of Gaussian maps on curves on Enriques surfaces in \cite{klGM}, which is a key ingredient 
to obtain the genus bound $g \leq 17$ for Enriques-Fano threefolds in \cite{klm}. In fact, the results in \cite{klGM} depend on the Clifford index of the curves and not on their gonality.

Another application of Theorem \ref{thm:noexc} will be in \cite{kl4}, where we will prove that a linearly normal Enriques surface $S \subset \PP^r$ is scheme-theoretically cut out by quadrics if and only if $\phi(\O_S(1)) \geq 4$ (improving \cite[Thm.1.3]{glm2}) and that, when $\phi(\O_S(1)) = 3$ and $\deg S \geq 18$, the intersection of the quadrics containing $S$ is the union of $S$ and the 2-planes spanned by the plane cubics contained in $S$. Moreover, in \cite{kl4} we will also use Theorem \ref{thm:noexc} to give a new proof (after \cite[Thm.1.1]{glm1}) of the projective normality of a linearly normal Enriques surface $S \subset \PP^r$ of degree at least $12$.

We now give an outline of the ideas concurring in the proof of our main result. The {\it Clifford dimension} of a smooth
curve is defined to be the least integer $r$ such that there is a $g^r_d$ computing its Clifford index. In this language, the exceptional curves are precisely the ones of Clifford dimension at least $2$ and curves of Clifford dimensions $r \leq 9$ are well classified by \cite{elms} and \cite{ma}, for example for $r=2$ we get smooth plane curves, for $r=3$ complete intersections of two cubics in $\PP^3$. In general the study of $g^r_d$'s with $r \geq 2$ on curves on surfaces by using vector bundle methods is much harder than the case $r=1$, because the vector bundles arising have ranks at least $3$, and are therefore much more difficult to handle than the ones of rank two, where various instability criteria can be used. In this note we show how to overcome this difficulty on an Enriques surface, but our methods and ideas can in principle be used also on other surfaces. The main idea is to use the geometry of the surface to find suitable line subbundles of the vector bundles, and after saturating we study the quotient bundle, which is of rank one less. This seems to be a promising method in the cases where one knows that the Picard group of the surface is particularly ``rich'' (at least of rank two!) However, as we will see below, vector bundle methods are not sufficient to treat the cases of Clifford dimension $3$, where we will need a more geometric approach, see Section \ref{sec:r=3}.

We give some preliminary results in Section \ref{sec:preliminari}. Then we divide the proof of Theorem \ref{thm:noexc} into four parts: the cases of plane curves ($r=2$) in Section \ref{sec:plane}, the cases of the complete intersections of two cubics ($r=3$) in Section \ref{sec:r=3}, the cases of Clifford dimension from $4$ to $9$ in Section \ref{sec:r=4-9}, and the cases of higher Clifford dimension in Section \ref{sec:r>9}. Finally in Section  \ref{sec:w14} we prove a result about Brill-Noether loci announced in \cite[Rmk.4.16]{kl1}.

\section{Preliminary results}
\label{sec:preliminari}

In this section we will gather some results that will be used throughout the note.

\begin{defn}
We denote by $\sim$ (resp. $\eqv$) the linear (resp. numerical) equivalence of divisors or line bundles on a smooth surface. A line bundle $L$ is {\bf primitive} if $L \eqv kL'$ implies $k = \pm 1$. If $V \subseteq H^0(L)$ is a linear system, we denote its {\bf base scheme} by $\Bs |V|$. A {\bf nodal} curve on an Enriques surface $S$ is a smooth rational curve contained in $S$. A {\bf nodal cycle} is a divisor $R>0$ such that, for any $0 < R' \leq R$ we have $(R')^2 \leq -2$.
\end{defn} 

We will use that if $R$ is a nodal cycle, then $h^0(\O_S(R)) = 1$ and $h^0(\O_S(R+K_S)) = 0$. 

\begin{lemma}
Let $C$ be an exceptional curve of Clifford dimension $r$ on an Enriques surface. Then
\begin{equation}
\label{eq:E2a}
\phi(C) \geq r
\end{equation}
and 
\begin{equation}
\label{eq:E2b}
\Cliff C \leq 2\phi(C) -3.
\end{equation}
\end{lemma}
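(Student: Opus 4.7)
The approach is to produce a pencil on $C$ of degree $2\phi(C)$ by restricting the genus-one fibration associated with a class achieving $\phi(C)$, and then to play this pencil against the complete $g^r_d$ computing the Clifford index via the base-point-free pencil trick.

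Choose a primitive $F \in \Pic S$ with $F^2 = 0$, $F \not\equiv 0$ and $|F \cdot C| = \phi(C)$; after replacing $F$ by $-F$ or $F + K_S$ we may assume $F$ is effective with $F \cdot C = \phi(C)$, so that $|2F|$ is a base-point-free genus-one pencil on $S$. Since $(2F - C) \cdot 2F = -2\phi(C) < 0$ and $2F$ is nef, $h^0(\O_S(2F - C)) = 0$; the restriction sequence $0 \to \O_S(2F - C) \to \O_S(2F) \to \O_C(2F) \to 0$ therefore yields $h^0(\O_C(2F)) \geq 2$, equipping $C$ with a base-point-free pencil $|B|$ of degree $2\phi(C)$. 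Hence $\gon C \leq 2\phi(C)$, and since $C$ is exceptional one has $\Cliff C = \gon C - 3 \leq 2\phi(C) - 3$, proving \eqref{eq:E2b}.

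For \eqref{eq:E2a} I argue by contradiction, assuming $\phi(C) \leq r - 1$. Let $|A|$ be a complete $g^r_d$ computing $\Cliff C$, so $d = \Cliff C + 2r$. The base-point-free pencil trick provides
\[
0 \to H^0(A - B) \to H^0(B) \otimes H^0(A) \to H^0(A + B),
\]
whence $h^0(A + B) \geq 2(r+1) - s$, with $s := h^0(A - B)$. Thus $|A + B|$ is a complete linear series of rank $\geq 2r + 1 - s$ and degree $d + 2\phi(C)$, with Clifford index at most $\Cliff C + 2(\phi(C) + s - r - 1)$; and when $s \geq 1$, $|A - B|$ is complete of rank $s - 1$ and degree $d - 2\phi(C)$, with Clifford index exactly $\Cliff C + 2(r + 1 - s - \phi(C))$.

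Minimality of $\Cliff C$ applied to $|A + B|$ forces $\phi(C) + s \geq r + 1$, so $s \geq r + 1 - \phi(C) \geq 2$. Applying the same principle to $|A - B|$ then yields $\phi(C) + s \leq r + 1$, giving equality $s = r + 1 - \phi(C)$. But then $|A - B|$ is a complete linear series of rank $r - \phi(C) \geq 1$ whose Clifford index equals $\Cliff C$, while having rank strictly smaller than $r$, contradicting that the Clifford dimension equals $r$. The main technical delicacy is to ensure that $|A + B|$ and $|A - B|$ genuinely contribute to the Clifford index, that is $h^1(A \pm B) \geq 2$; this reduces via Riemann-Roch to genus estimates forced by the assumption that $C$ is exceptional of Clifford dimension $r$, with perhaps a few low-genus edge cases to dispatch by hand.
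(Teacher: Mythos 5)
Your proof of \eqref{eq:E2b} and your construction of the degree-$2\phi(C)$ pencil $|B|$ coincide with what the paper does (it simply quotes $\gon C \leq 2\phi(C)$ and uses $\Cliff C = \gon C - 3$). For \eqref{eq:E2a}, however, the paper's argument is a one-line citation: the proof of Proposition 3.2 in \cite{elms} shows that \emph{every} pencil on an exceptional curve of Clifford dimension $r$ has degree at least $2r$, and applying this to $|B|$ gives $2\phi(C) \geq 2r$ at once. Your base-point-free-pencil-trick argument is essentially a self-contained re-derivation of that input, which is a legitimate and more transparent route --- but it is incomplete at exactly the point where the hypothesis ``exceptional of Clifford dimension $r$'' has to do real work.

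The gap is the inequality $h^1(A+B) \geq 2$: without it $A+B$ need not contribute to the Clifford index, so the key step ``$\phi(C)+s \geq r+1$'' is unjustified, and the whole contradiction evaporates. You flag this, but the reduction you sketch does not close it: Riemann--Roch only gives $h^1(A+B) \geq g+1-s-c-2\phi(C)$ (with $c=\Cliff C$), and the naive bounds $s \leq r$, $c \leq 2\phi(C)-3$, $h^1(A)\geq 2$ do not force this to be $\geq 2$. The missing observation is that $K_C-A$ also computes $\Cliff C$, so minimality of the Clifford dimension gives $h^1(A)=h^0(K_C-A) \geq r+1$, whence $g \geq \deg A +1 = c+2r+1$. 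One must also establish the bound $s \leq r+1-\phi(C)$ \emph{before} treating $A+B$: this follows because $h^1(A-B) \geq h^1(A) \geq 2$, so whenever $s \geq 2$ the bundle $A-B$ contributes and $\Cliff(A-B) \geq c$ forces $s \leq r+1-\phi(C)$, while $s \leq 1$ gives the same bound trivially. Combining, $h^1(A+B) \geq 2r+2-s-2\phi(C) \geq r+1-\phi(C) \geq 2$ under the contradiction hypothesis $\phi(C) \leq r-1$. With that inserted (and the order of the two halves of your argument swapped accordingly), the proof closes; as written, it is not complete.
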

 
\begin{proof} By \cite[Proof of Prop.3.2]{elms} any (not necessarily complete) pencil of divisors on $C$ has degree $\geq 2r$, whence \eqref{eq:E2a}. Since $\Cliff C = \gon C-3$ and $\gon C \leq 2\phi(C)$ we get  \eqref{eq:E2b}.
\end{proof}

\begin{lemma}
\label{cliffhanger}
Let $L$ be a base-point free line bundle on an Enriques surface $S$ with $L^2 \geq 6$. Assume that $L + K_S \sim D_1+ D_2$ for two divisors $D_1$ and $D_2$ satisfying $h^0(D_i) \geq 2$, $i = 1, 2$. Then $\O_C(D_1)$ and $\O_C(D_2)$ contribute to the Clifford index of any smooth $C \in |L|$ and
\[ \Cliff \O_C(D_1) = \Cliff \O_C(D_2) \leq D_1.D_2 -2 \max \{h^1(D_1), h^1(D_2) \} \leq  D_1.D_2.   \]
Also $\Cliff \O_C(D_1) = D_1.D_2$ if and only if $h^0(\O_C(D_i)) = h^0(D_i)$ and $h^1(D_i)=0$ for $i = 1, 2$.
\end{lemma}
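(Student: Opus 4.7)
The plan is to study $\O_C(D_i)$ via the restriction sequence
\[
0 \to \O_S(D_i - L) \to \O_S(D_i) \to \O_C(D_i) \to 0,
\]
together with the rewriting $D_i - L \sim K_S - D_{3-i}$ that follows from $L + K_S \sim D_1 + D_2$.

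First I would observe that, since $h^0(K_S) = 0$ on an Enriques surface and each $D_j$ is effective, the divisor $K_S - D_j$ is not effective for $j = 1, 2$. This forces $h^0(\O_S(D_i - L)) = 0$ and also $h^2(\O_S(D_i)) = h^0(K_S - D_i) = 0$. The first vanishing, combined with the long exact sequence in cohomology, yields $h^0(\O_C(D_i)) \geq h^0(D_i) \geq 2$. Next, using $K_C \sim (K_S + L)|_C \sim D_1|_C + D_2|_C$, one has $\O_C(D_{3-i}) \cong K_C \otimes \O_C(D_i)\v$, so $h^1(\O_C(D_i)) = h^0(\O_C(D_{3-i})) \geq 2$. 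Thus both line bundles contribute to $\Cliff C$, and the standard involution $\Cliff A = \Cliff(K_C - A)$ immediately gives $\Cliff \O_C(D_1) = \Cliff \O_C(D_2)$.

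For the numerical bound, I would apply Riemann--Roch on $S$: from $h^2(D_i) = 0$ and $\chi(\O_S) = 1$ one gets $D_i^2/2 = h^0(D_i) - 1 - h^1(D_i)$. Combined with $\deg \O_C(D_i) = D_i.L = D_i^2 + D_1.D_2$ (using $K_S.D_i = 0$), direct substitution yields
\[
\Cliff \O_C(D_i) = D_1.D_2 - 2 h^1(D_i) - 2 \bigl( h^0(\O_C(D_i)) - h^0(D_i) \bigr),
\]
where both correction terms are non-negative. This gives $\Cliff \O_C(D_i) \leq D_1.D_2 - 2 h^1(D_i)$, and since $\Cliff \O_C(D_1) = \Cliff \O_C(D_2)$, the common value is bounded by the minimum of the two right-hand sides, which is exactly $D_1.D_2 - 2 \max\{h^1(D_1), h^1(D_2)\}$. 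The equality characterization then reads off the displayed identity: $\Cliff \O_C(D_i) = D_1.D_2$ is equivalent to $h^1(D_i) = 0$ together with $h^0(\O_C(D_i)) = h^0(D_i)$, and running this for both $i$ produces the stated biconditional.

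I do not anticipate a serious obstacle. The only input specific to Enriques surfaces is the vanishing $h^0(K_S) = 0$, which cleanly removes what would otherwise be unknown terms in the restriction sequence; the rest is standard Riemann--Roch bookkeeping combined with the self-duality $\Cliff A = \Cliff(K_C - A)$ of the Clifford index.
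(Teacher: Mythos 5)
Your argument is correct and is exactly the standard computation the paper has in mind: the paper's own proof is just a one-line reference to \cite[Lemma 3.6]{kn2}, whose argument is the same restriction-sequence, Serre-duality and Riemann--Roch bookkeeping you carry out (with $h^0(K_S)=0$ killing $h^0(D_i-L)$ and $h^2(D_i)$, and the identity $\Cliff \O_C(D_i) = D_1.D_2 - 2h^1(D_i) - 2(h^0(\O_C(D_i))-h^0(D_i))$ giving both the inequality and the equality criterion). No gaps.
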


\begin{proof} This follows the lines and ideas in \cite[Lemma3.6]{kn2}.
\end{proof}
Given a smooth curve $C$ on a smooth surface $S$ and a base-point free line bundle $A$ on $C$, a standard construction (\cite{cp}, \cite{la1}, \cite{par}) allows to define a vector bundle $\E(C,A)$ of rank $h^0(A)$ and with $\det \E(C,A) = \O_S(C)$, sitting in an exact sequence
\begin{equation} 
\label{eq:eca}
0 \hpil H^0(A)^{\ast} \* \O_S \hpil \E \hpil \N_{C/S}-A \hpil 0
\end{equation}
and whose properties are listed in the mentioned references.
\begin{lemma}
\label{useful3}
Let $C$ be a smooth irreducible curve on a smooth irreducible regular surface $S$ and let $A$ be a complete base-point free $g^r_d$ on $C$ with $r \geq 1$ and $h^0(\N_{C/S} - A) > 0$. Let $s \in H^0(\E(C,A))$ be a nonzero section and let $D \geq 0$ be the divisorial subscheme of the zero locus of $s$. Then we have an exact sequence
\begin{equation}
\label{eq:E8}
0 \hpil \O_S(D) \hpil \E(C,A) \hpil \F \hpil \tau \hpil 0,
\end{equation}
where $\F$ is locally free of rank $r$, $\tau$ is a torsion sheaf supported on a finite set and $\F$ is globally generated off a finite set contained in $C \cup \Supp(\tau)$. Moreover $h^0(\F^*) = 0$ and, if $D > 0$, also $h^1(\F^*) = 0$.
Define $M = \det \F$. Then $M$ is nontrivial, base-component free, $C \sim M + D$ and, if $D > 0$, then $M_{|C} \geq A$. Finally
\begin{equation}
\label{eq:E11}
\Cliff A = D.M + \length(\tau) + c_2(\F) - 2 \rk \F.
\end{equation}
\end{lemma}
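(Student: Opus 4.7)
The approach is a standard saturation procedure on the Lazarsfeld--Mukai bundle $\E=\E(C,A)$, combined with two inputs extracted from \eqref{eq:eca}: the vanishings $h^0(\E^*)=h^1(\E^*)=0$, which follow from the cohomology of the dual sequence $0\to\E^*\to H^0(A)\otimes\O_S\to A\to 0$ together with the regularity $h^1(\O_S)=0$, and the Chern class identity $c_2(\E)=\deg A=d$. Every assertion then comes either from chasing the four-term sequence or from Chern class bookkeeping.

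The four-term sequence \eqref{eq:E8} itself is built in two steps. Since $s$ has divisorial zero $D$, the map $\O_S\to\E$ extends uniquely to an inclusion $\O_S(D)\hookrightarrow\E$ whose cokernel $\G:=\E/\O_S(D)$ is torsion-free of rank $r$. Auslander--Buchsbaum on the smooth surface $S$ forces $\F:=\G^{**}$ to be locally free of rank $r$, and the cokernel $\tau$ of $\G\hookrightarrow\F$ is supported on the finite locus where $\G$ fails to be locally free. Splicing these two short exact sequences gives \eqref{eq:E8}, and comparing first Chern classes via the Whitney formula (with $c_1(\tau)=0$) yields $M:=\det\F\sim C-D$, i.e.\ $C\sim M+D$.

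For the geometric properties I exploit that the trivial subbundle $H^0(A)^*\otimes\O_S\subset\E$ generates $\E$ off $C$, while a Tor computation on the fibers of \eqref{eq:eca} along $C$ shows that $\E$ fails to be globally generated at $x\in C$ only when $x$ is a base point of $|\N_{C/S}-A|$, a finite subset by the hypothesis $h^0(\N_{C/S}-A)>0$. Therefore $\F$ is globally generated off a finite subset of $C\cup\Supp(\tau)$ and $M=\det\F$ is base-component free. When $D>0$, the section $s$ projects to a section $\bar s\in H^0(\N_{C/S}-A)$ vanishing on $D_{|C}$, whence $M_{|C}-A\sim\N_{C/S}-A-D_{|C}$ is effective and $M_{|C}\geq A$. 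Both $M\not\iso\O_S$ and $h^0(\F^*)=0$ follow from the same observation: any nonzero map $\F\to\O_S$ cannot kill the image of $\E\to\F$ (since the obstruction $\tau$ is torsion and $\O_S$ is torsion-free), so it composes with $\E\twoheadrightarrow\G\hookrightarrow\F$ into a nonzero element of $H^0(\E^*)$, contradicting $h^0(\E^*)=0$.

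When $D>0$, dualizing $0\to\O_S(D)\to\E\to\G\to 0$ produces $0\to\F^*\to\E^*\to\O_S(-D)\to\Shext^1(\G,\O_S)\to 0$, whose last term has zero-dimensional support; the vanishings $h^0(\O_S(-D))=0$ (using $D>0$) and $h^1(\E^*)=0$ then combine in the associated long exact sequence to force $h^1(\F^*)=0$. Finally, equating total Chern classes in \eqref{eq:E8} gives $c_2(\E)+c_2(\tau)=D\cdot M+c_2(\F)$; since $c_2(\tau)=-\length(\tau)$ for a zero-dimensional torsion sheaf and $c_2(\E)=d$, this yields $d=D\cdot M+\length(\tau)+c_2(\F)$, and subtracting $2r=2\rk\F$ produces \eqref{eq:E11} via $\Cliff A=d-2r$. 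I expect the most delicate step to be the fiber-level global generation analysis of $\E$ along $C$; the remaining arguments are standard saturation, Whitney-formula bookkeeping, and diagram chasing with the two cohomological inputs about $\E^*$.
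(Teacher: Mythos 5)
Your proposal is correct and follows essentially the same route as the paper, which simply delegates the standard saturation construction and the properties of $\E(C,A)$ to \cite[2.12]{gl1} and \cite[1.11]{par} and then reads off the cohomological vanishings from $H^i(\E(K_S))=0$ and the Chern class identity exactly as you do. The only step whose justification is incomplete as written is the claim that $M\not\iso\O_S$ ``follows from the same observation'' as $h^0(\F^*)=0$: to get a nonzero map $\F\to\O_S$ from $\det\F\iso\O_S$ you still need to wedge $r-1$ general sections of $\F$ (using that $\F$ is globally generated off a finite set, so that $\wedge^{r-1}\F\iso\F^*\*\det\F$ has a nonzero section), or alternatively argue as the paper does that $M\sim 0$ would force $D\sim C>0$ and hence the absurdity $\O_C\geq A$ via the already-established $M_{|C}\geq A$.
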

\begin{proof}
The exact sequence (\ref{eq:E8}) and the facts that $\F$ is locally free of rank $r$ and $\tau$ is a torsion sheaf supported on a finite set are standard (\cite[2.12]{gl1} or \cite[1.11]{par}). As $h^0(\N_{C/S} - A) > 0$ we see from \eqref{eq:eca} that $\E = \E(C,A)$ is globally generated off a finite set contained in $C$, whence $\F$ is globally generated off a finite set contained in $C \cup \Supp(\tau)$. As $H^i(\E(K_S)) = 0$ for $i = 1, 2$ we get from \eqref{eq:E8} that $h^0 (\F^*) = 0$ and, if $D > 0$, also $h^1(\F^*) = 0$. Taking $c_1$ in (\ref{eq:E8}) yields $C \sim M+D$ and using $c_2(\E) =  \deg A$ and (\ref{eq:E8}) we get
\[ \Cliff A =  c_2(\E) -2(\rk \E-1) =  D.M + c_2 (\F) + \length(\tau) - 2\rk \F, \]
showing (\ref{eq:E11}). If $D > 0$, tensoring \eqref{eq:eca} and (\ref{eq:E8}) by $\O_S(-D)$ and taking global sections yields $h^0(M_{|C}-A) > 0$, that is $M_{|C} \geq A$. In particular we get that $M$ is nontrivial, otherwise we would have that $D \sim C > 0$, hence $\O_C \geq A$, a contradiction. Moreover $M$ is globally generated off a finite set since $\F$ is, hence $M$ is base-component free.
\end{proof}
\begin{lemma}
\label{bigger0}
Let $C$ be a smooth irreducible curve of Clifford index $c$ on an Enriques surface $S$, let $L = \O_S(C)$ and let $A$ be a line bundle on $C$ that computes the Clifford index of $C$ with $h^0(\N_{C/S} - A) > 0$ and $h^0(A) \geq 3$. Let $s \in H^0(\E(C,A))$ be a nonzero section and let $D \geq 0$ be the divisorial subscheme of the zero locus of $s$ and let $\F, M$ be defined as in Lemma {\rm \ref{useful3}}. Suppose that $L^2 \geq 6, c \leq 2 \phi(L) - 3$ and $h^0(D) \geq 2$.

Then $D^2 > 0$ and $M^2 > 0$. Consequently $h^1(M) = h^1(M + K_S) = 0$, $h^1(D) \leq 1$, $h^1(D + K_S) \leq 1$, $-2 \leq c_2(\F) - 2 \rk \F \leq 0$ and $D.M \leq c + 2$.
\end{lemma}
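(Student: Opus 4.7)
My plan is to establish the lemma in three parts: (i) the positivity $D^2, M^2 > 0$; (ii) the cohomological vanishings; and (iii) the Chern class inequalities together with the intersection bound, both of which reduce, via \eqref{eq:E11}, to a two-sided bound on $D.M + \length(\tau)$ in terms of $c$.

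For the positivity I argue by contradiction. Since $h^0(D) \geq 2$ the movable part $D_0$ of $|D|$ is a nonzero base-component free class, hence $D_0^2 \geq 0$. If $D^2 \leq 0$, then $D_0^2 = 0$ and the structure of isotropic classes on an Enriques surface forces $D_0 \eqv kF$ for some primitive $F > 0$ with $F^2 = 0$, so $D \geq F$ effectively. Then $\phi(L) \leq F.L = F.M + F.D$, where the right-hand side is non-negative term by term by nefness of $F$ and of $M$ (base-component free). Combining this with $M_{|C} \geq A$ (giving $M.C \geq \deg A = c+2r$), $\phi(L) \geq r \geq 2$ from \eqref{eq:E2a}, the hypothesis $c \leq 2\phi(L) - 3$, and $L^2 \geq 6$, one obtains a numerical contradiction. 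The argument for $M^2 > 0$ is entirely parallel: $M$ is base-component free so $M^2 \geq 0$, and if $M^2 = 0$ then $M \eqv k'F'$ modulo $K_S$ for some primitive isotropic $F'$, forcing $F'.L = F'.D \geq \phi(L)$ and $M.D = k'F'.D \geq c+2r$, which conflicts with $L^2 \geq 6$ and $c \leq 2\phi(L)-3$.

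Given $D^2, M^2 > 0$, the cohomological statements follow from standard Enriques surface theory. Since $M$ is nef and big, Kawamata-Viehweg gives $h^i(M + K_S) = 0$ for $i > 0$, and the same vanishing applied to $M + K_S$ (also nef and big) gives $h^1(M) = 0$. For $D$: bigness of $D$ and non-bigness of $K_S$ force $h^2(D) = h^0(K_S - D) = 0$, hence Riemann-Roch combined with the standard bound $h^0(D) \leq D^2/2 + 2$ on Enriques surfaces yields $h^1(D) \leq 1$, and similarly for $D + K_S$.

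The Chern class and intersection bounds follow from \eqref{eq:E11}: writing $c_2(\F) - 2\rk \F = c - D.M - \length(\tau)$, the double inequality $-2 \leq c_2(\F) - 2\rk\F \leq 0$ amounts to $c \leq D.M + \length(\tau) \leq c + 2$, and $D.M \leq c + 2$ follows at once from $\length(\tau) \geq 0$. The lower bound $D.M + \length(\tau) \geq c$ comes from applying Lemma \ref{cliffhanger} to the decomposition $L + K_S \sim (M + K_S) + D$: by $M^2 \geq 2$ and $h^1(M+K_S) = 0$ we have $h^0(M+K_S) \geq 2$, while $h^0(D) \geq 2$ by hypothesis, so the lemma yields $\Cliff \O_C(M+K_S) \leq (M+K_S).D = M.D$, and the fact that $A$ computes $\Cliff C$ then gives $c \leq M.D \leq D.M + \length(\tau)$.

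The main obstacle is the upper bound $D.M + \length(\tau) \leq c + 2$, i.e.\ $c_2(\F) \geq 2\rk\F - 2$. The plan here is to use the refined equality clause of Lemma \ref{cliffhanger} together with a careful bookkeeping of $h^0(\F)$ and $h^1(\F)$ via the long exact sequence associated to \eqref{eq:E8}, exploiting $h^0(\F^\ast) = h^1(\F^\ast) = 0$ from Lemma \ref{useful3}, the vanishings of the previous paragraph, and the hypotheses $h^0(A) \geq 3$ and $c \leq 2\phi(L) - 3$; these together should ensure that $\F$ has enough sections to force $c_2(\F) \geq 2\rk\F - 2$ by a degeneracy-locus argument on a generic pencil of sections of $\F$.
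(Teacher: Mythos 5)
Your proposal has the right skeleton (the identity \eqref{eq:E11}, Lemma \ref{cliffhanger} for the lower bound $D.M\geq c$, a Chern-class inequality for the upper bound), but the step you defer to the very end as ``the main obstacle'' --- the inequality $c_2(\F)-2\rk\F\geq -2$ --- is in fact the engine of the entire lemma, and you never prove it: ``should ensure \dots by a degeneracy-locus argument'' is a plan, not an argument. The paper imports this inequality directly from \cite[Prop.3.2]{kn2} (for globally generated bundles with $c_1^2>0$), together with its crucial variant $c_2(\F)-2\rk\F\geq -2m$ when $\det\F\eqv mP$ for an elliptic pencil $P$. That variant is exactly what kills the case $M^2=0$: plugging it into \eqref{eq:E11} gives $c\geq mP.D-2m=mP.L-2m\geq 2\phi(L)-2$, contradicting $c\leq 2\phi(L)-3$. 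Your own treatment of $M^2=0$ only produces lower bounds ($M.D\geq c+2r$, $F'.L\geq\phi(L)$) with no upper bound to play them against, so no contradiction actually materializes; the same criticism applies to your $D^2\leq 0$ case, which silently needs $D.M\leq c+2$ --- an inequality you have not yet established at that point. The order of deductions is forced: first $M^2>0$ and $c_2(\F)-2\rk\F\geq-2$, hence $D.M\leq c+2$; only then can one attack $D$.

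A second concrete error: your derivation of $h^1(D)\leq 1$ from Riemann--Roch plus ``the standard bound $h^0(D)\leq D^2/2+2$'' fails, because that bound is false for general effective divisors on an Enriques surface (take $D=N+kR$ with $N$ nef and big, $R$ nodal, $N.R=0$: then $h^0(D)\geq h^0(N)=N^2/2+1$ while $D^2=N^2-2k^2$, so $h^1(D)\geq k^2$). The bound holds for nef divisors, but nothing forces $D$ to be nef here. The paper instead gets $h^1(D)\leq 1$ from Lemma \ref{cliffhanger} applied to $L+K_S\sim (M+K_S)+D$, which gives $D.M\geq c+2h^1(D)$, squeezed against $D.M\leq c+2$; then $h^1(D)\leq 1$ forces $D^2\geq 0$ by Riemann--Roch, and $D^2=0$ is excluded because it would give $2\phi(L)\leq D.L=D.M\leq c+2$. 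Finally, note that your appeal to \eqref{eq:E2a} for $\phi(L)\geq r$ is unwarranted: that inequality is proved only for exceptional curves, whereas this lemma assumes nothing of the sort (all you may use is $\phi(L)\geq 2$, which follows from $0\leq c\leq 2\phi(L)-3$).
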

\begin{proof}
Assume first that $M^2 = 0$. Then $M \sim mP$ for an elliptic pencil $|P|$ and an integer $m \geq 1$. By \cite[Prop.3.2]{kn2} we have $c_2 (\F) - 2\rk \F \geq -2m$, whence by (\ref{eq:E11}) we get
\[c \geq mP.D - 2m = mP.L - 2m \geq 2m\phi(L) - 2m \geq 2\phi(L) - 2,\]
a contradiction. Therefore $M^2 > 0$, hence $h^1(M) = h^1(M + K_S) = 0$ since $M$ is nef, and by \cite[Prop.3.2]{kn2} again it follows that $c_2(\F) - 2\rk \F \geq - 2$. It follows from (\ref{eq:E11}) again that $D.M \leq c + 2$. Note that $\phi(L) \geq 2$, whence $L$ is base-point free by \cite[Thm.4.4.1]{cd}.

Since $M^2 \geq 2$ we have $h^0(M + K_S) \geq 2$, whence by Lemma \ref{cliffhanger} we must have $D.M \geq c + 2h^1(D)$, hence $h^1(D) \leq 1$. The latter immediately yields by Riemann-Roch that $D^2 \geq 0$, and if $D^2 = 0$, we get $2 \phi(L) \leq D.L = D.M \leq c + 2$, contradicting our assumption. It follows that $D^2 >0$, whence $h^0(D + K_S) \geq 2$, $h^1(D + K_S) \leq 1$ and $D.M \geq c$ by Lemma \ref{cliffhanger} once more. Now $D.M \geq c$ implies $c_2 (\F) - 2\rk \F \leq 0$.
\end{proof}
Let $\R$ be a vector bundle of rank $r \geq 1$ on a smooth surface $S$ with $\R$ globally generated off a finite set, $h^0(\R^*)=0$ and $c_1(\R)^2 >0$. It is a standard fact (see for example the proof of \cite[Prop.3.2(a)]{kn2}), that for a general subspace $V \sub H^0(\R)$ of dimension $r$, the evaluation map $V  \* \O_S \khpil \R$ is generically an isomorphism and drops rank along an irreducible curve $C \in |\det \R|$, which is smooth away from the points where $\R$ is not globally generated, and the cokernel is a torsion free sheaf of rank one.

\begin{defn} 
{\rm A vector bundle $\R$ of rank $r \geq 1$ on a surface $S$ is said to be} good {\rm if it is globally generated off a finite set, $h^0(\R^*)=0$, $c_1(\R)^2 >0$ and there is a subspace $V \sub H^0(\R)$ of dimension $r$ such that the evaluation map $V  \* \O_S \hpil \R$ is injective and drops rank along 
a} smooth, {\rm irreducible curve $C \in |\det \R|$.} 
\end{defn}

For our purposes it will be sufficient to know the following

\begin{lemma} 
\label{lemma:good}
Let $\L$ be a line bundle on a smooth regular surface with $\L^2 >0$ and $Z \subset S$ a zero-dimensional subscheme such that $|\I_Z \* \L| \neq \emptyset$, $\dim (\Bs |\I_Z \* \L|) =0$ and $\Bs |\I_Z \* \L|$ is curvilinear. Let $\R$ be the dual of the kernel of the evaluation map  $H^0(\I_Z \* \L) \* \O_S \khpil \I_Z \* \L$. Then $\R$ is good.
\end{lemma}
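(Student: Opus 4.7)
The plan is to work with $W := H^0(\I_Z \otimes \L)$, $B := \Bs|\I_Z \otimes \L|$, and $r := \dim W - 1$. Since $Z \subset B$ scheme-theoretically, the evaluation map $W \otimes \O_S \to \I_Z \otimes \L$ has image $\I_B \otimes \L$, so $\K$ fits into the short exact sequence
\[ 0 \to \K \to W \otimes \O_S \to \I_B \otimes \L \to 0. \]
Note that $r \geq 1$: otherwise $|\I_Z \otimes \L|$ consists of a single effective divisor whose base locus is divisorial, contradicting $\dim B = 0$. The first step is to show that $\K$ is locally free, and this is where the curvilinearity of $B$ is essential: at each $p \in B$ one has $\I_{B,p} = (x, y^n)$ in suitable local coordinates, a complete intersection ideal, so $\I_B \otimes \L$ has projective dimension at most $1$ and $\K$ is locally free of rank $r$. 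Hence $\R = \K^*$ is a vector bundle of rank $r$.

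Next I would read off the easy defining properties of $\R$. Computing determinants on $U := S \setminus B$ and extending across the finite set $B$ gives $\det \R = \L$, so $c_1(\R)^2 = \L^2 > 0$. Taking $H^0$ of the displayed sequence, $H^0(\K)$ is the kernel of the inclusion $W \hookrightarrow H^0(\L)$ and hence vanishes, giving $h^0(\R^*) = 0$. Dualizing the sequence (using $\Shom(\I_B, \O_S) = \O_S$, valid because $B$ is zero-dimensional) produces
\[ 0 \to \L^{-1} \to W^* \otimes \O_S \to \R \to \T \to 0, \]
with $\T$ a torsion sheaf supported on $B$; this shows $\R$ is globally generated by the image of $W^*$ off the finite set $B$. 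Moreover $h^0(\L^{-1}) = 0$ (since $\L^2 > 0$ excludes both $\L$ and $\L^{-1}$ being effective simultaneously), so we get an inclusion $W^* \hookrightarrow H^0(\R)$.

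For the final property, I propose to take, for general $\phi \in W \setminus \{0\}$, the subspace $V := \phi^\perp \subset W^* \subset H^0(\R)$, which has dimension $r$. The key observation is that at $p \in U$ the kernel of $W^* \otimes \O_S \to \R$ is the line $\L^{-1}|_p \subset W^*$ spanned by evaluation-at-$p$, so $V$ contains it precisely when $\phi(p) = 0$. Consequently the rank-drop divisor of $V \otimes \O_S \to \R$ equals $C := (\phi)_0 \in |\L| = |\det \R|$, while injectivity of $V \otimes \O_S \to \R$ is automatic from generic full rank. The remaining task is to arrange, for general $\phi$, that $C$ be smooth and irreducible.

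The hard part is smoothness at the points of $B$, where classical Bertini has nothing to say; this is where curvilinearity is used a second time. With $\I_{B,p} = (x, y^n)$ as above, the stalk-level surjection $W \otimes \O_{S,p} \twoheadrightarrow (\I_B \otimes \L)_p$ allows one to lift $x$ and $y^n$ to sections $\phi_1, \phi_2 \in W$; a general $\phi = \alpha \phi_1 + \beta \phi_2 + \ldots$ then has nonzero linear part $\alpha x$ in local coordinates, so $C$ is smooth at $p$, and finiteness of $B$ lets this be arranged at every point of $B$ simultaneously. Irreducibility follows from the Bertini--Jouanolou theorem once one observes that $|\I_Z \otimes \L|$ has no fixed component (as $\dim B = 0$) and is not composed of a pencil, since otherwise $\L$ would be pulled back from a curve and $\L^2 = 0$, contradicting the hypothesis.
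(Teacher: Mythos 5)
Your proof is correct and, at its core, follows the same strategy as the paper's: everything reduces to the fact that the general member $C=(\phi)_0$ of $|\I_Z\*\L|=|\I_B\*\L|$ is a smooth irreducible curve, and the distinguished subspace is in both proofs the annihilator $\phi^\perp$ of the equation of such a member inside $H^0(\I_Z\*\L)^*\sub H^0(\R)$ (the paper's $V_C^*$, with $V_C=\Im\{H^0(\I_B\*\L)\to H^0(\O_C(\L)(-B))\}=H^0(\I_B\*\L)/\langle\phi\rangle$, is literally your $\phi^\perp$). The differences lie in how the two inputs are handled. For the Bertini statement the paper simply cites \cite[Prop.1.1]{ei}, whereas you reprove it: classical Bertini off $B$, the local complete-intersection structure $(x,y^n)$ at the curvilinear base points to force a nonzero linear part, and non-composedness with a pencil for irreducibility. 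That last step is the one place you are too quick: a system composed with a rational pencil having base points can have $\L^2>0$ (unions of two lines through a point of $\PP^2$, say), so ``composed with a pencil $\Rightarrow\L^2=0$'' is false as stated; the gap closes at once, though, because such a general member is singular at the common base point, contradicting the smoothness you have already established, while a base-point free pencil does force $\L^2=0$. For the identification of the degeneracy locus you compute the pointwise kernel of $H^0(\I_B\*\L)^*\*\O_S\to\R$ directly, while the paper pushes the restriction sequence $0\to\O_S\to\I_B\*\L\to\O_C(\L)(-B)\to0$ through the snake lemma; the latter yields for free that the cokernel of $V\*\O_S\to\R$ is the line bundle $\O_C(B)$, which is the form in which goodness is actually exploited in Proposition \ref{prop:bassoc} (your route recovers this as well, since smoothness of $C$ forces the rank to drop by exactly one along $C$, but you would need to say so). Two cosmetic remarks: local freeness of the kernel does not require curvilinearity (the kernel of a map of bundles with torsion-free cokernel is reflexive, hence locally free on a smooth surface) --- curvilinearity is needed only for the Bertini step; and $h^0(\L^{-1})=0$ simply because $\L$ is effective and nontrivial.
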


\begin{proof}
Let $V = H^0(\I_Z \* \L)$ and set $W= \Bs |\I_Z \* \L|$. Then, by assumption, $V \iso H^0(\I_W \* \L)$ and we have a short exact sequence
\begin{equation*}
0 \hpil \R^* \hpil V \* \O_S \hpil \I_W \* \L \hpil 0,
\end{equation*}
where $\R^*$ is well-known to be locally free and is seen to satisfy $h^0(\R^*)=0$ and $c_1(\R) \sim \L$.

By Bertini's theorem \cite[Prop.1.1]{ei}, the general element in $|\I_W \* \L|$ is a smooth irreducible curve. Pick any such curve $C$ and 
consider the restriction map sequence
\begin{equation}
\label{eq:good1}
0 \hpil \O_S \hpil \I_W \* \L  \hpil \O_C(\L)(-W) \hpil 0.
\end{equation}
Taking evaluation maps in \eqref{eq:good1}, the snake lemma yields a short exact sequence
\[ \xymatrix{
0 \ar[r] & \R^* \ar[r] &  V_C \* \O_S \ar[r]^{ev_{V_C \ \ }} &   \O_C(\L)(-W) \ar[r] & 0} \]
where $V_C := \Im \{ H^0(\I_W \* \L) \khpil H^0(\O_C(\L)(-W))$. Dualizing, we obtain
\[ \xymatrix{
0 \ar[r] & V_C^* \* \O_S \ar[r] & \R \ar[r] &  \O_C(W) \ar[r] & 0} \]
and we see that $V_C^* \sub H^0(\R)$ is the desired subspace.
\end{proof}
The main application of this construction will be the following useful tool
\begin{prop} 
\label{prop:bassoc}
Let $\R$ be a vector bundle of rank at least $2$ on an Enriques surface $S$, with $\R$ globally generated off a finite set, $h^0(\R^*)=0$ and $c_1(\R)^2 >0$. Set $c(\R)=c_2(\R)-2(\rk \R-1)$ and $M = \det \R$. Then $c(\R) \geq 0$.

If $c(\R) \leq 1$, then $h^1(\R^*)=0$ and furthermore:
\begin{itemize}
\item[(i)] If $c(\R)=0$ and $\phi(M) \geq 2$, then $M^2 \leq 8$, and if equality occurs, then
$M \eqv 2M_0$, with $M_0^2=2$. 
\item[(ii)] If $c(\R)=1$, $h^0(\R(K_S)) \geq 2$, $\phi(M) \geq 2$ and $\R$ is good, then one of the following occurs:
\begin{itemize}
\item[(ii-a)] $M^2=12$ and $M \sim 3E_1+2E_2$, with $E_i >0$, $E_i^2=0$ and $E_1.E_2=1$.
\item[(ii-b)] $(M^2, \phi(M))=(10,3)$, $\rk \R=2$ or $4$ and $M \sim 2E +R+K_S$, with $R$ nodal, 
           $E >0$, $E^2=0$ and $E.R=3$.
\item[(ii-c)] $(M^2, \phi(M))=(10,2)$.
\item[(ii-d)] $M^2 \leq 8$.
\end{itemize}
\end{itemize}
\end{prop}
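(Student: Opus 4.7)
The plan is to adapt the techniques from \cite[Prop.3.2]{kn2} (where the analogue is proved on del Pezzo surfaces) and from the Lazarsfeld--Mukai theory on $K3$ surfaces to the Enriques setting, exploiting that $K_S$ is numerically trivial so that intersection-theoretic quantities behave as in the $K3$ case. The starting point is the standard construction: picking a general $r$-dimensional subspace $V \subset H^0(\R)$ with $r=\rk \R$, the hypotheses (global generation off a finite set, $c_1(\R)^2>0$, $h^0(\R^*)=0$) ensure the evaluation map $V \otimes \O_S \to \R$ is injective with cokernel $\iota_*\L$, a rank-one torsion-free sheaf on some curve $C \in |M|$, giving
\begin{equation*}
0 \to V \otimes \O_S \to \R \to \iota_* \L \to 0.
\end{equation*}
A Whitney-class computation and adjunction on $C$ (using $g(C)=M^2/2+1$) then express $c(\R)$ in terms of $M^2$ and $\deg \L$. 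The inequality $c(\R)\ge 0$ follows from a Clifford-type argument on $C$, using that $\iota_*\L$ inherits global generation off a finite set from $\R$. When $c(\R)\le 1$, dualizing the sequence and applying Serre duality together with standard vanishing on $S$ yields $h^1(\R^*)=0$.

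For case (i), assume $c(\R)=0$ and $\phi(M)\ge 2$; then $M$ is base-component free by \cite[Thm.4.4.1]{cd}. If $M^2 \ge 10$, I seek a decomposition $M \sim D_1+D_2$ with $D_i>0$, $D_i^2 \ge 0$ and $D_1.D_2$ sufficiently large; such a decomposition is guaranteed by the lattice arithmetic of Enriques surfaces together with $\phi(M)\ge 2$, unless $M$ has a very special form. Passing to a line subbundle of $\R$ associated with one of the $D_i$ and applying the analogue of Lemma \ref{bigger0} forces $c(\R)\ge 1$, a contradiction. The boundary case $M^2=8$ is then treated by invoking the lattice classification of base-point free $M$ on Enriques surfaces with $M^2=8$ and $\phi(M)\ge 2$ (see \cite{cd}), which forces $M \eqv 2M_0$ with $M_0^2=2$.

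For case (ii), with $c(\R)=1$, $h^0(\R(K_S))\ge 2$, $\phi(M)\ge 2$ and $\R$ good, the strategy is similar but finer. Goodness ensures the curve $C\in |M|$ is smooth and irreducible, so $\L$ is an honest line bundle on $C$, to which I apply the gonality/Clifford analysis of Lemma \ref{cliffhanger} (using the $K_S$-twist coming from $h^0(\R(K_S))\ge 2$ to produce the required divisorial decomposition). The resulting numerical constraints split the analysis: either $M$ admits a decomposition forcing $M^2\le 8$ (case (ii-d)), or $M$ is sufficiently rigid, in which case its lattice position on $S$ must fall into one of the three exceptional slots (ii-a), (ii-b), (ii-c). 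The identification of these slots relies on the explicit classification of primitive line bundles of small $\phi$-invariant on Enriques surfaces from \cite{cd} and \cite{kl1}.

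The hardest part will be the fine classification in case (ii), particularly the rank restriction $\rk \R \in \{2,4\}$ appearing in (ii-b). This restriction is expected to come from a parity argument combining $2K_S \sim 0$ with the hypothesis $h^0(\R(K_S))\ge 2$, while the nodal decomposition $M \sim 2E + R + K_S$ must be extracted from the list of $(M^2,\phi(M))=(10,3)$ line bundles using the structure of the genus-one pencil $|E|$ and the nodal cycle $R$. The critical numerical input throughout is translating the bound $D_1.D_2 \le c(\R) + 2 = 3$ (in the spirit of Lemma \ref{bigger0}) into lattice constraints on $M$ via $\phi(M)$, and then systematically ruling out all possibilities outside the listed cases.
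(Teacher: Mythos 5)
Your opening construction (general evaluation map, rank-one quotient $A$ on a curve $C\in|M|$, Whitney formula) matches the paper's, but the engine that drives the whole proof is missing from your sketch: the Eisenbud--Koh--Stillman theorem \cite[Thm.A, appendix]{eks} applied to $A$. The paper's identity $c(\R)=\Cliff A+2h^1(\R^*)$ together with EKS is what simultaneously yields $h^1(\R^*)=0$ when $c(\R)\le 1$ (your ``Serre duality plus standard vanishing'' does not do this, since one must first rule out $h^1(A)=0$) and, crucially, what converts the numerical hypothesis into low-gonality information about $C$: $c(\R)=0$ forces $C$ to carry a $g^1_2$, and $c(\R)=1$ forces $\Cliff C\le 1$. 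Your proposal never extracts this, and without it your lattice analysis has no starting point. Concretely, in (i) the paper applies the Reider-type result \cite[Prop.3.7]{kn1} to a general divisor $Z$ of the $g^1_2$ (which poses dependent conditions on $|M+K_S|$) to produce $D$ with $2D^2\le D.M\le D^2+2\le 4$, whence $M^2\le 8$ and $M\eqv 2D$ in the boundary case. Your alternative -- decompose $M$ by lattice arithmetic, pass to a line subbundle, and invoke an analogue of Lemma \ref{bigger0} to force $c(\R)\ge 1$ -- has no identified mechanism (Lemma \ref{bigger0} concerns $\E(C,A)$ for $A$ computing the Clifford index of $C$, not an arbitrary $\R$), and your fallback claim that the lattice classification of base-point-free $M$ with $M^2=8$, $\phi(M)\ge 2$ forces $M\eqv 2M_0$ is false: e.g.\ $M\sim E_1+E_2$ with $E_i^2=0$ and $E_1.E_2=4$ satisfies these numerics without being $2$-divisible in $\Num S$. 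The $2$-divisibility in the equality case is a genuine output of the geometric argument, not of the lattice.

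In (ii) the gap is larger. The restriction $\rk\R\in\{2,4\}$ in (ii-b) is not a parity argument with $2K_S\sim 0$: Riemann--Roch gives $\rk\R+h^0(\R(K_S))=6$, hence $\rk\R\in\{2,3,4\}$, and the exclusion of $\rk\R=3$ is the longest and most delicate part of the paper's proof. In that case $|A|$ is a very ample $g^2_5$, so $C$ is a smooth plane quintic; the paper then builds an auxiliary rank-two bundle $\E(C,A_0)$ from a $g^1_4$ of the form $A-Z_3+y-y$, computes $h^0(\E(C,A_0))=3$ via a saturation argument, identifies the failure locus of global generation of $\R$ as a length-$5$ scheme $T_5$, and derives a contradiction by comparing $\Bs|\I_{T_5}\*M|$ with $E\cap C$ using Reider's theorem. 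None of this is reachable by the ``lattice position'' analysis you propose, and the remaining identifications (ii-a), (ii-b) also require knowing that $C$ is trigonal so that \cite[Prop.3.1]{kl1} applies. As written, the proposal would not close.
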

\begin{proof} As mentioned above, for a general subspace $V \sub H^0(\R)$ of dimension $\rk \R$, the evaluation map yields 
a short exact sequence
\begin{equation} 
\label{eq:bassoc1}
0 \hpil V  \* \O_S \hpil \R \hpil B \hpil 0,
\end{equation}
where $B$ is a torsion free sheaf of rank one on some reduced irreducible $C \in |\det \R|$, given by wedging the sections in a basis of $V$. Moreover $B$ is globally generated off a finite set whence $h^0(B)>0$. Dualizing we obtain
\begin{equation} 
\label{eq:bassoc2}
0 \hpil \R ^* \hpil V^* \* \O_S \hpil A \hpil 0,
\end{equation}
where $A$ is a torsion free sheaf of rank one on $C$. Moreover, if $\R$ is good, then we can and will assume that $C$ is smooth and $A$ and $B$ are line bundles with $B= \N_{C/S}-A$.

One easily sees that $c_2(\R) = \deg A$, $h^1(A)=h^0(\R \* \K_S)$ and $\rk \R + h^1(\R^*)=h^0(A)$, so that $c(\R) =  \Cliff A +2h^1(\R^*)  \geq \Cliff A$. 

Now $c(\R) \geq 0$ follows from \cite[Prop.3.2(a)]{kn2}.
If $h^1(A)=0$, then one easily sees, as in the proof of \cite[Prop.3.2(a)]{kn2}, that $c(\R) \geq 2$. Therefore, if $c(\R) \leq 1$, we have that $A$ is special and, using \cite[Thm.A,appendix]{eks}, we deduce that $h^1(\R^*)=0$ and $c(\R) =  \Cliff A$.

We have left to prove (i) and (ii).

We first prove (i). If $c(\R)=0$ it follows again from \cite[Thm.A,appendix]{eks} that either $p_g(C)=0$, or $p_g(C) \geq 1$ and either $A = \O_C$ or $A = \omega_C$ or $C$ possesses a $g^1_2$ (that is, a line bundle $\L$ with $\deg \L =2$ and $h^0(\L)=2$). The case $p_g(C)=0$ cannot happen for general $C$ constructed as above, because, as $\R$ is globally generated off a finite set, it follows that $C$ moves, whence $S$ would be covered by (singular) rational curves, a contradiction. Therefore, the general $C$ obtained as above has $p_g(C) \geq 1$, whence also $p_a(C) \geq 1$. Now $h^0(A) = \rk \R \geq 2$ so it cannot be $A = \O_C$.  If $A = \omega_C = \O_C(C+K_S)$, twisting the exact sequence \eqref{eq:bassoc2} by $\O_S(-C-K_S)$ and dualizing we deduce that $B \otimes \O_C(C+K_S) \cong {\mathcal Ext}^1_{\O_S}(\O_C, \O_S) \cong \O_C(C)$ and therefore $B \cong \O_C(K_S)$. Now it is easily seen that $H^0(\O_C(K_S)) = 0$, a contradiction. Therefore, the general $C$ obtained as above has a $g^1_2$, which is necessarily base-point free, since $C$ is not rational. It is standard that any $Z$ in this linear system poses dependent conditions on $|M+K_S|$, and since $\phi(M) \geq 2$, then $|M+K_S|$ is base-point free, and if $M^2 \geq 8$ we can apply \cite[Prop.3.7]{kn1} and find that there is an effective divisor $D$ on $S$ passing through $Z$ and such that
$2D^2 \leq D.M \leq D^2+2 \leq 4$.
Also, as $C$ is irreducible, we have $D.C=D.M \geq 2$, and we get the only two possibilities $(D^2, D.M)=(0,2)$ or $(2,4)$. Since $\phi(M) \geq 2$, we have $h^0(D)=h^0(D+K_S)=1$ in the first case, whence there are only finitely many such divisors $D$ with $D.M=2$. Choosing $Z$ general, which we can do since the $g^1_2$ is base-point free, we can avoid this case. Hence we are in the second case and $M \eqv 2D$ by the Hodge index theorem. This proves (i).

Now we prove (ii). 
Since $\R$ is good we can assume that $C$ is smooth and $A$ a special line bundle on $C$ with $\Cliff A = 1$. By assumption $h^1(A)=h^0(\R \* \K_S) \geq 2$, whence $\Cliff C \leq 1$. If $\Cliff C=0$, then $C$ is hyperelliptic, and, as in the proof of (i), we get $M^2 \leq 8$.

We can therefore assume $\Cliff C=1$. We can furthermore assume that either $M^2 \geq 12$ or $(M^2, \phi(M))=(10,3)$, since otherwise we would be in one of the cases (ii-c) or (ii-d).

We first show that the case $(M^2, \phi(M), \rk \R)=(10,3,3)$ does not occur. 

Indeed, in this case $|A|$ is a $g^2_5$, that is base-point free since $\Cliff C=1$, whence $A$ is very ample since $g(C) = 6$. Hence $C$ is isomorphic to a smooth plane quintic. By \cite[Lemma2.18]{kl1} we can choose an $E >0$ such that $E^2=0$, $E.M=3$, $|M-E|$ is base-point free and $h^0(M-E)=3$. As in the proof of \cite[Prop.4.13]{kl1} we have that there is an effective divisor $Z_3 \subset C$ of degree $3$ such that $|\O_C(M-E)(-Z_3)|$ is a base-point free, complete $g^1_4$ on $C$, call it $A_0$. It is well-known that any $g^1_4$ on a smooth plane quintic comes from projecting from a point on the curve, that is there is a point $y \in C$ such that
$A \sim A_0 + y \sim \O_C(M-E)(-Z_3+y)$.
If $y \in \Supp Z_3$, then $\O_C(M-E) > A$, which is impossible, since $h^0(\O_C(M-E))=h^0(A)=3$ and $\O_C(M-E)$ is base-point free. Therefore $y \not \in \Supp Z_3$, which will be useful later on. 

Let now $\E =\E(C,A_0)$, which is locally free of rank two and sits into
\begin{equation} 
\label{eq:bassoc3}
0 \hpil H^0(A_0)^{\ast} \* \O_S \hpil \E \hpil \N_{C/S}-A_0 \hpil 0.
\end{equation}
Now  $h^1(-E)=0$ (as $E.M=\phi(M)=3$ implies $h^0(E)=h^0(E+K_S)=1$) whence $h^0(\E(-E)) = h^0((\N_{C/S}-A_0)(-E))=h^0(\O_C(M-E)(-A_0)) = h^0(\O_C(Z_3)) = 1$ and $\E$ is globally generated off a finite set. We also claim that
\begin{equation} 
\label{eq:bassoc6}
h^0(\E)=3.
\end{equation}
To prove this, we saturate the inclusion $0 \khpil \O_S(E) \khpil \E$ to obtain a short exact sequence
\begin{equation} 
\label{eq:bassoc4}
0 \hpil \O_S(\Delta)  \hpil \E \hpil \I_X (M-\Delta) \hpil 0 
\end{equation}
with $\Delta \geq E$ and $X \subset S$ a $0$-dimensional subscheme. Furthermore $c_2(\E)=4=\Delta.(M-\Delta) + \deg X \geq
\Delta.(M-\Delta)$. Moreover $|M-\Delta|$ is base-component free. If $M-\Delta \sim 0$ then \eqref{eq:bassoc4} yields the contradiction $3 = h^0(M-E) \leq  h^0(\E(-E)) = 1$. Therefore $(M-\Delta)^2 \geq 0$ and $h^0(M-\Delta) \geq 2$. If $(M-\Delta)^2 = 0$ we get the contradiction $4 \geq (M-\Delta).M \geq 2 \phi(M) = 6$. Hence $(M-\Delta)^2 > 0$ and $\Delta.M \geq E.M=3$, so that $\Delta^2 \geq -4 + \Delta.M \geq -1$, giving $\Delta^2 \geq 0$. Now one easily sees, by the Hodge index theorem and $M^2=10$, that $\Delta^2 \geq 2$ implies $\Delta^2=(M-\Delta)^2=2$ and $\Delta.(M-\Delta)=3$. But then $\Delta.M=5$, yielding $\phi(M)=2$, a contradiction. Therefore, we only have the two possibilities
\begin{equation} 
\label{eq:bassoc5}
(\Delta^2, \Delta.(M-\Delta), (M-\Delta)^2, \deg X)=(0,3,4,1), \; (0,4,2,0). 
\end{equation}
In both these cases we see that $h^0(\Delta)=1$ since $\Delta.M < 2\phi(M)=6$, whence $h^1(\Delta)=0$. Moreover, in the first case, as $(M-\Delta).M =7 < 3\phi(M)=9$, we must have $\phi(M-\Delta)=2$. Therefore $|M-\Delta|$ is base-point free in this case. It follows that in both 
cases of \eqref{eq:bassoc5} we have $h^0(\I_X (M-\Delta))=2$. Therefore \eqref{eq:bassoc6} follows by taking cohomology in  \eqref{eq:bassoc4}.

Comparing with \eqref{eq:bassoc3} we see that $h^0(\N_{C/S}-A_0)=1$. Taking evaluation maps in \eqref{eq:bassoc3} we get that the scheme where $\E$ fails to be globally generated is precisely the unique member $T_6 \in |\N_{C/S}-A_0|$ of length $6$. Moreover we note that
\begin{equation} 
\label{eq:bassoc6'}
T_6 = (E \cap C) \cup Z_3.
\end{equation}
The inclusion $A_0 = A-y \subset A$ yields the exact sequence
\begin{equation} 
\label{eq:bassoc7}
 \xymatrix{
0 \ar[r] & \O_S \ar[r] &  \R \ar[r] & \E  \ar[r] & \tau_y \ar[r] & 0 }
\end{equation}
where $\tau_y$ is a torsion sheaf of length one supported only at $y$.

By \eqref{eq:bassoc6} and \eqref{eq:bassoc7} we get $h^0(\R) \leq 4$. On the other hand, using \eqref{eq:bassoc1} and the fact that $h^0(B)>0$, we get $h^0(\R) = 3 + h^0(B) \geq 4$, whence $h^0(\R)=4$ and $h^0(B) = h^0(\N_{C/S}-A)=1$. Furthermore it follows that the scheme where $\R$ fails to be globally generated is precisely the unique member $T_5 \in |\N_{C/S}-A|$ of length $5$. Note that $T_5=T_6-y$ (as divisors on $C$). Since we have seen that $y \not\in \Supp Z_3$ we have $y \in E \cap C$ by \eqref{eq:bassoc6'}, whence
$T_5 = X_2 \cup Z_3, \; \mbox{where} \; X_2 = E \cap C - y$.
Now from
\[ \xymatrix{ 0 \ar[r] & \O_S \ar[r] &  \I_{T_5 / S} \* M \ar[r] & \I_{T_5 / C} \* M \ar[r] & 0 } \]
we see that $\Bs |\I_{T_5} \* M|=T_5$ since $\I_{T_5 / C} \* M \cong A$. We now claim that any $M' \in |M|$ passing through $X_2 \sub E \cap C$ must pass through the whole of $E \cap C$. Assuming the claim for a moment, we get the contradiction $\Bs |\I_{T_5} \* M|=T_6$. To see the claim first note that the exact sequence
\[ 0 \hpil \O_S \hpil \I_{(E \cap C) / S}(M) \hpil \O_C(M-E) \hpil 0 \]
shows that $h^0(\I_{(E \cap C) / S}(M)) = 4$, since $h^0(\O_C(M-E)) = 3$, which is easily seen to follow from $h^0(M-E)=3$. To see the claim it is therefore enough to show that $h^0(\I_{X_2 / S}(M)) = 4$. To this end first note that from the exact sequence
\[ 0 \hpil \O_S \hpil \I_{X_2 / S}(M) \hpil \O_C(M)(-X_2) \hpil 0 \]
we see that $W := \Bs|\I_{X_2 / S} \* M| \sub E \cap C$, since $\O_C(M)(-X_2) \cong \O_C(M-E)(y)$ and $\O_C(M-E)$ is base-point free. Hence $W$ is curvilinear of length at most $3$. Blowing-up  $S$ at most three times, we resolve the base-scheme of $|\I_{X_2 / S} \* M|$ and therefore the resulting linear system is base-point free and not composite with a pencil, so that its general divisor is irreducible by Bertini's theorem. It follows that the general divisor $M_0 \in |\I_{X_2 / S} \* M|$ is irreducible.  Now $h^0(M) = 6$ and $\deg X_2 = 2$, whence, to prove that $h^0(\I_{X_2 / S}(M)) = 4$, we can just show that $|M|$ separates $X_2$. By Reider's theorem \cite[Thm.1]{re}, if $|M|$ does not separate $X_2$, there exists an effective divisor $G$ on $S$ such that $X_2 \subset G$ and either $G^2 = 0, G.M = 1, 2$ or $G^2 = -2, G.M = 0$. But the first case is excluded since $\phi(M) = 3$ and the second since $G.M = G.M_0 \geq 2$.

We have therefore proved that the case $(M^2, \phi(M), \rk \R)=(10,3,3)$ does not occur. 

Now assume $(M^2, \phi(M))=(10,3)$. Then Riemann-Roch yields
\[ h^0(A)-h^1(A)= \rk \R - h^0(\R(K_S))= \deg A-5 = 2(\rk \R-1)-4 = 2\rk \R-6, \]
whence $\rk \R + h^0(\R(K_S))=6$, which yields the two possibilities $(\rk \R, h^0(\R(K_S)))=(h^0(A),h^1(A))=(2,4)$ or $(4,2)$, by assumption. In the first case $|A|$ is a $g^1_3$ and in the second $|\omega_C-A|$ is. Therefore $C$ is trigonal. This also holds for $M^2 \geq 12$, since $\Cliff C = 1$, therefore, in the remaining cases to treat, we have that $C$ is trigonal. Denote a $g^1_3$ by $|A_0|$.

We now use \cite[Prop.3.1]{kl1}. 
If we are in case (a) of that proposition there is a base-component free line bundle $D$ such that 
$2D^2 \leq D.M \leq D^2+3 \leq 6 \; \mbox{and} \; \O_C(D) \geq A_0$.
If $D^2=0$, then we must have $\O_C(D) \sim A_0$ and $D.M=3$, but this is impossible, since $D$ has to be a multiple of an elliptic pencil. Therefore $D^2 =2$ and $D.M \leq 5$, whence $\phi(M) \leq 2$. Therefore $M^2 \geq 12$ by our assumptions, and the Hodge index theorem yields $M^2=12$ and $D.M=5$. We can write $D \sim E_1+E_2$ for $E_i >0$, $E_i^2=0$, $E_1.E_2=1$, $E_1.M=2$ and $E_2.M=3$. Now $(M-3E_1)^2=0$, whence $M \sim 3E_1+F$ for some $F>0$ with $F^2=0$ by \cite[Lemma2.4]{kl1}. But $3=E_2.M=3+E_2.F$ yields $E_2.F=0$, whence $F \eqv 2E_2$ by \cite[Lemma2.1]{klvan}. Therefore $M \eqv 3E_1+2E_2$ and we are in case (ii-a).

If we are in case (b) of \cite[Prop.3.1]{kl1} then $M^2=10$ and we have $M \sim 2N+R+K_S$ for some nodal $R$ and $N>0$ such that $N.M \geq \phi(M)=3$, $N.(N+R)=3$ and $|N+R+K_S|$ is base-component free. Since $3 \leq N.M = N^2 +N.(N+R)$ we must have $N^2 \geq 0$, and similarly, as $h^0(N+R+K_S) \geq 2$, we must have $6=2\phi(M) \leq (N+R).M$, whence $(N+R)^2 \geq 4$. Using $M^2 = 10$ we see that the only possibility is $N^2=0$ and $(N+R)^2=4$, therefore $N.R=3$, which shows that we are in case (ii-b).
\end{proof}

\section{Plane curves}
\label{sec:plane}

It it easily seen that an Enriques surface can contain plane curves of degree up to $4$ and it is a nontrivial result of Stagnaro \cite{sta} and Umezu \cite{umeq} that the same happens for plane quintics (however never general in their linear system \cite[Prop.4.13]{kl1}). On the other hand, in this section we will prove that, on an Enriques surface, there are no smooth plane curves of degree at least $6$. 
We remark that in \cite{glm1} the same result was proved for degree at least $9$. The present proof is independent of that one.

\begin{prop}
\label{plane}
On an Enriques surface there are no curves which are isomorphic to a smooth plane curve of degree $d \geq 6$.
\end{prop}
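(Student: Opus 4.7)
Assume for contradiction that $C \subset S$ is isomorphic to a smooth plane curve of degree $d \geq 6$, and let $A$ denote the associated $g^2_d$ on $C$. Then $C$ is exceptional of Clifford dimension $2$, with $\Cliff C = \Cliff A = d-4$, $g(C) = \binom{d-1}{2}$ and $L^2 = (d-1)(d-2)-2$ for $L = \O_S(C)$; the preceding lemma yields $\phi(L) \geq 2$ and $\phi(L) \geq (d-1)/2$. The plan is to analyze the rank-$3$ Lazarsfeld--Mukai bundle $\E = \E(C,A)$ from \eqref{eq:eca}, which satisfies $\det \E = L$, $c_2(\E) = d$ and $h^0(\N_{C/S} - A) \geq g - 1 - d > 0$ by Riemann--Roch on $C$, so the hypothesis of Lemma \ref{useful3} is met.

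The crux is to produce a nonzero section $s \in H^0(\E)$ with divisorial vanishing $D > 0$ satisfying $h^0(D) \geq 2$. For $d \geq 7$ this is forced by Bogomolov's theorem: the discriminant $6 c_2(\E) - 2 c_1(\E)^2 = -2d(d-6)$ is negative, so $\E$ is Bogomolov-unstable, and extracting a saturated rank-$1$ subsheaf from its Harder--Narasimhan filtration (refining the maximal destabilizing piece to rank $1$ if necessary) yields $\O_S(D) \incl \E$ with $D^2 > 0$, hence $h^0(D) \geq 2$. The borderline case $d = 6$ requires a separate argument, either via a direct count showing $h^0(\E) > h^0(A) = 3$ in a way that forces divisorial vanishing, or by exploiting the Picard-lattice structure of plane sextics on Enriques surfaces established in \cite{kl1}.

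Having secured such a $D$, Lemma \ref{useful3} produces the exact sequence $0 \to \O_S(D) \to \E \to \F \to \tau \to 0$ with $\F$ locally free of rank $2$, $M = \det \F$ base-component free and $C \sim M + D$. Lemma \ref{bigger0} then gives $D^2, M^2 > 0$, $c_2(\F) \in \{2,3,4\}$, and $D.M \leq d-2$. Writing $c(\F) = c_2(\F) - 2 \in \{0,1,2\}$ and verifying via Lemma \ref{lemma:good} that $\F$ is good and that $\phi(M) \geq 2$ (a short computation using $\phi(L) \geq (d-1)/2$), Proposition \ref{prop:bassoc} forces $M^2 \leq 12$ whenever $c(\F) \leq 1$, with $M$ taking one of the explicit forms (i) or (ii-a)--(ii-d). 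The remaining case $c(\F) = 2$ is treated via \eqref{eq:E11}: the equation $d - 4 = D.M + \length(\tau) + c_2(\F) - 4$ forces $D.M + \length(\tau) = d-4$, and Lemma \ref{cliffhanger} applied to $L + K_S \sim D + (M + K_S)$ nails down $h^0(\O_C(D)) = h^0(D)$, $h^1(D) = h^1(M + K_S) = 0$, producing a rigid numerical picture.

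The contradiction comes from combining the lower bound $D^2 + M^2 = L^2 - 2 D.M \geq d^2 - 5d + 4$ with the upper bound $M^2 \leq 12$ (and a symmetric bound on $D^2$ obtained by reversing the roles of $D$ and $M$, up to the asymmetry between subsheaf and quotient), together with the Hodge index inequality $D^2 \cdot M^2 \leq (D.M)^2 \leq (d-2)^2$. For $d$ sufficiently large these are manifestly incompatible; the small cases $d = 6, 7$ require a case-by-case elimination of the four outputs (ii-a)--(ii-d), each checked against $\phi(L) \geq (d-1)/2$ and the arithmetic $L^2 = (d-1)(d-2) - 2$. I expect the main obstacle to lie in the borderline case $d = 6$, where Bogomolov instability is not strict and the production of the line subbundle is delicate, and in ruling out case (ii-b) with its decomposition $M \sim 2E + R + K_S$ involving an elliptic pencil and a nodal curve, since this is structurally the same feature that allows smooth plane quintics ($d = 5$) to exist on Enriques surfaces.
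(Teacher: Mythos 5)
Your overall architecture (Lazarsfeld--Mukai bundle $\E(C,A)$ of rank $3$, a line subbundle $\O_S(D)$, a rank-$2$ quotient $\F$, Hodge index on $D$ and $M=\det\F$) is the right one, and your final numerical contradiction for $d\geq 7$ --- $D^2M^2\leq (D.M)^2\leq (d-2)^2$, $D^2,M^2\geq 2$, $L^2=D^2+M^2+2D.M$ forcing $d\leq 6$ --- is exactly the computation the paper performs in \eqref{eq:E16}. But your crux step is the problem. You propose to \emph{produce} a section of $\E$ whose divisorial vanishing $D$ satisfies $h^0(D)\geq 2$; the paper's Lemma \ref{useful4} proves that for \emph{every} nonzero section of $\E$ one has $h^0(D)=1$ (indeed, the assumption $h^0(D)\geq 2$ is what the paper feeds into Lemma \ref{bigger0} and the Hodge index theorem to derive a contradiction, i.e.\ your endgame is the paper's proof that such a $D$ cannot exist). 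Your route to such a $D$ via Bogomolov instability of the rank-$3$ bundle $\E$ has a genuine gap: the maximal destabilizing subsheaf may have rank $2$, and ``refining the maximal destabilizing piece to rank $1$ if necessary'' is not justified --- a line subbundle of a rank-$2$ destabilizer need not be effective, need not have positive square, and need not destabilize $\E$ in the Bogomolov sense. (If the destabilizer happens to have rank $1$, then indeed $\xi=D-\tfrac13 L$ with $\xi^2>0$, $\xi.H>0$ gives $D^2>0$ and $h^0(D)\geq 2$, and your argument closes for $d\geq 7$; but you have no control over which case occurs.) The paper avoids this entirely: it never destabilizes $\E$ itself, but instead uses the rigidity $h^0(D)=1$ together with a choice of section forcing either $\length(\tau)\geq 2$ (sections vanishing at two points outside the finite union $\Pi$ of nodal and half-pencil curves, Lemma \ref{xylemma}) or $D\geq E$, so that the rank-$2$ quotient $\F$ has $c_1(\F)^2-4c_2(\F)\geq 2$ and can be destabilized by the standard rank-$2$ theory. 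Also, your closing paragraph is muddled: there is no ``symmetric bound on $D^2$'', and Proposition \ref{prop:bassoc} plays no role in the paper's proof of this proposition.

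The second, equally serious, gap is $d=6$, which you defer to ``a separate argument'' without content. This is where the paper does most of its work: the Bogomolov discriminant vanishes, $h^0(\N_{C/S}-A)$ may be as small as $2$, and two subcases arise. The paper first kills $(L^2,\phi(L))=(18,3)$ by an independent geometric argument (the restriction of $L-2E$ would cut a base-point-free complete $g^1_6$ on $C$ not contained in the series cut by lines, impossible on a smooth plane sextic), and then for $(18,4)$ runs the full machinery --- the set $\Pi$, the two-point section, or a section vanishing on $E$, followed by a further destabilization $0\to M_1\to\F\to\I_W\otimes M_2\to 0$ and a case analysis via Lemma \ref{cliffhanger}. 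Without a concrete plan here your proof does not establish the statement for $d=6$, which is the case actually needed downstream (e.g.\ for Proposition \ref{casi14,16} and the remark answering \cite[Rem.3.9]{glm1}).
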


\noindent {\it Proof.} Assume, to get a contradiction, that $C$ is a smooth plane curve of degree $d \geq 6$ lying on an Enriques surface $S$, that is $C$ has a very ample line bundle $A$ with $h^0(A) = 3$, $\deg A = d$. We set $L = \O_S(C)$ and let $E > 0$ be a nef divisor such that $E.L
= \phi(L)$.

If $d = 6$, then $\gon(C) = 5$ and $L^2 = 18$, whence $3 \leq \phi(L) \leq \lfloor \sqrt{18} \rfloor = 4$.

We will now rule out the case $d = 6$, $\phi(L) = 3$.
Set $M = L - 2E$, so that $M^2 = 6, M.L = 12$, hence $H^2(M) = H^0(- M + K_S) = 0$. From the exact sequence
\[ 0 \hpil - M \hpil 2E \hpil \O_C(2E) \hpil 0 \]
we see that if $H^1(- M) = 0$ we are done: In fact this implies that $|\O_C(2E)|$ is a base-point free complete $g^1_6$ on $C$, but this is not possible on a smooth plane sextic, as any such $g^1_6$ is contained in the linear series cut out by the lines (this is a well-known fact, see for example
\cite{lp}). Suppose then that $H^1(- M) \neq 0$. By \cite[Cor.2.5]{klvan} $M$ is not quasi-nef, that is there is a $\Delta > 0$ such that $\Delta^2 = -2$ and $\Delta.M \leq -2$, whence, by \cite[Lemma2.3]{kl1}, setting $k = - \Delta.M \geq 2$, there exists an $A > 0$ such that $A^2 = 6$, $A.\Delta = k$ and $M \sim A + k \Delta$. Now $0 \leq L.\Delta = 2E.\Delta + M.\Delta$ whence $E.\Delta \geq 1$. From $3 = E.M = E.A + k E.\Delta$ we see that the only possibility is $k = 2$ and $E.\Delta = 1$, therefore $L.\Delta = 2E.\Delta + M.\Delta = 0$ and the Hodge index theorem implies that $L \eqv 6E + 3\Delta$. In particular $2E + \Delta$ is nef, $(2E + \Delta)^2 = 2$, hence $h^0(2E + \Delta) = 2$, $h^1(2E + \Delta) = 0$. Also $L - 2E - \Delta \eqv 2(2E + \Delta)$ whence $h^i(2E + \Delta - L) = 0$, $i = 0, 1$. From the exact sequence
\[ 0 \hpil 2E + \Delta - L \hpil 2E + \Delta \hpil \O_C(2E + \Delta) \hpil 0 \]
we see that we are done because, as above, $|\O_C(2E + \Delta)|$ is a base-point free complete $g^1_6$ on $C$, leading to the same contradiction.

We now proceed with the proof of the other cases.
Since $(E+ K_S)_{|C} > 0$ we have
\begin{eqnarray*}
h^0(\N_{C/S} - A) & = & h^0(\omega_C(- A) (K_S) \geq h^0(\omega_C(- A) (- (E+ K_S)_{|C})) \geq \\
\nonumber & \geq & h^0(\omega_C - A) - \phi(L) = h^1(A) - \phi(L) = g - d + 2 - \phi(L) = \\
& = & \frac{1}{2}(d - 1)(d - 2) - d + 2 - \phi(L) \geq \frac{1}{2}d(d - 5) + 3 - \lfloor \sqrt{d(d - 3)} \rfloor,
\end{eqnarray*}
whence we have shown that
\begin{equation}
\label{eq:d>6}
\mbox{if} \; d \geq 7 \; \mbox{then} \; h^0(\N_{C/S} - A) \geq 5
\end{equation}
and
\begin{equation}
\label{eq:pl2}
\mbox{if} \; d = 6 \; \mbox{then} \; (L^2, \phi(L)) = (18,4) \; \mbox{and} \; h^0(\N_{C/S} - A) \geq 2.
\end{equation}
Set $\E = \E(C,A)$. By \eqref{eq:eca}, (\ref{eq:d>6}) and (\ref{eq:pl2}) we get \[ h^0(\E) \geq 8 \; \mbox{if} \; d \geq 7, \; \mbox{and} \; h^0(\E) \geq 5 \;
\mbox{if} \; d =6. \]
Moreover, as $h^0(\N_{C/S} - A) > 0$, we get by \eqref{eq:eca} that $\E$ is globally generated off a finite set.

We first need the following

\begin{lemma}
\label{useful4}
Let $s \in H^0(\E)$ be a nonzero section. Denote by $D \geq 0$ be the divisorial subscheme of the zero locus of $s$. Then we have an exact sequence as in (\ref{eq:E8}) with $h^0(D) = 1$, $D \leq L$ and $c_1(\F)^2 - 4c_2(\F) = d(d - 7) + 2D.L - 3D^2 + 4\length(\tau)$.
\end{lemma}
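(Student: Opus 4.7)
The first two assertions follow directly from Lemma \ref{useful3}, whose hypotheses are satisfied: $S$ is smooth and regular, $A$ is a complete base-point free $g^2_d$ on $C$, and $h^0(\N_{C/S}-A)>0$ by \eqref{eq:d>6} and \eqref{eq:pl2}. The sequence (\ref{eq:E8}) produced there has $\F$ locally free of rank $2$, $\tau$ a finite-length torsion sheaf, and $M:=\det\F\sim L-D$ nontrivial and base-component free, hence effective, so that $D\leq L$.

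The Chern class identity is a direct computation. Splitting the sequence as $0\to\O_S(D)\to\E\to\F'\to 0$ and $0\to\F'\to\F\to\tau\to 0$, the standard formulas give $c_1(\F)=c_1(\F')=M$ and $c_2(\F)=c_2(\F')-\length(\tau)=d-D.M-\length(\tau)$, starting from $c_1(\E)=L$ and $c_2(\E)=\deg A=d$. Since $L.K_S=0$ on an Enriques surface and $g(C)=\binom{d-1}{2}$, adjunction yields $L^2=d(d-3)$. Substituting this, together with $M^2=L^2-2D.L+D^2$ and $D.M=D.L-D^2$, into $c_1(\F)^2-4c_2(\F)=M^2-4(d-D.M-\length(\tau))$ and simplifying produces the stated identity.

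The essential point is $h^0(D)=1$. Suppose for contradiction that $h^0(D)\geq 2$. Then $L^2\geq 18\geq 6$, $\phi(L)\geq r=2$ by \eqref{eq:E2a}, and $c:=\Cliff C=d-4\leq 2\phi(L)-3$ by \eqref{eq:E2b}, so the hypotheses of Lemma \ref{bigger0} are met. This gives $D^2>0$, $M^2>0$, $h^1(M+K_S)=0$, $h^1(D)\leq 1$ and $D.M\leq d-2$. In particular $h^0(D+K_S)\geq 2$ and $h^0(M)\geq 2$, and writing $L+K_S\sim(D+K_S)+M$, Lemma \ref{cliffhanger} yields
\[
d-4=\Cliff C\leq\Cliff\O_C(M)\leq D.M\leq d-2,
\]
while $D>0$ and Lemma \ref{useful3} also give $M_{|C}\geq A$.

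The main obstacle is extracting a contradiction from these constraints. I would exploit the rigidity of the Clifford structure on a smooth plane curve of degree $d\geq 5$: its Clifford index is realized only by $A$ and $\omega_C-A$, and its minimal pencils are precisely the projections from a point of $C$. This pins down $\O_C(M)$ tightly in each of the cases $\Cliff\O_C(M)\in\{d-4,d-3,d-2\}$. Combining the resulting information on $M_{|C}$ with $M_{|C}\geq A$, $D.M\leq d-2$, $M^2,D^2>0$ and the Hodge index theorem applied to $L=D+M$, one obtains numerical relations on the Enriques surface that are incompatible with $\phi(L)\geq\lceil(d-1)/2\rceil$ (and with the explicit $\phi(L)=4$ from \eqref{eq:pl2} in the boundary case $d=6$, where the arithmetic is tightest). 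This case-by-case numerical analysis is the most delicate step.
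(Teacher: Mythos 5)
Your reduction is set up correctly: the exact sequence, $D\leq L$, and the Chern class identity are all handled exactly as the paper does (the identity is indeed just \eqref{eq:E11} with $\rk\F=2$, $c_2(\E)=d$, $L^2=d(d-3)$ and $M\sim L-D$), and your application of Lemma \ref{bigger0} under the assumption $h^0(D)\geq 2$ is legitimate --- the hypotheses $L^2\geq 6$, $h^0(A)\geq 3$, $h^0(\N_{C/S}-A)>0$ and $c\leq 2\phi(L)-3$ are all available from \eqref{eq:E2a}, \eqref{eq:E2b}, \eqref{eq:d>6} and \eqref{eq:pl2}. But the proof stops exactly where it matters. Everything after ``The main obstacle is extracting a contradiction'' is a plan, not an argument: you say you ``would exploit the rigidity of the Clifford structure'' and that ``one obtains numerical relations \dots incompatible with $\phi(L)\geq\lceil(d-1)/2\rceil$,'' but no such relations are derived and no contradiction is exhibited. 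Since $h^0(D)=1$ is the entire content of the lemma beyond bookkeeping, this is a genuine gap, not a cosmetic omission.

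What you are missing is that no plane-curve rigidity is needed at this point: the data you have already extracted from Lemma \ref{bigger0} suffice for a purely numerical contradiction. From $D^2>0$, $M^2>0$ and \eqref{eq:E11} one gets $D.M\leq c+2=d-2$, hence by the Hodge index theorem $D^2M^2\leq (D.M)^2\leq (d-2)^2$; since $D^2\geq 2$ and $M^2\geq 2$ (both are even), this forces $D^2+M^2\leq \tfrac12(d-2)^2+2$, whence
\[ d(d-3)=L^2=D^2+M^2+2D.M\leq \tfrac12(d-2)^2+2+2(d-2)=\tfrac12 d^2, \]
so $d\leq 6$, i.e.\ $d=6$ with equality throughout. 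Equality in Hodge then gives $L\eqv 3D$ with $D^2=2$ (or $L\eqv 3M$ with $M^2=2$), so $\phi(L)\leq 3$, contradicting $\phi(L)=4$ from \eqref{eq:pl2}. Note in particular that the boundary case $d=6$, $\phi(L)=3$, which your sketch flags as ``where the arithmetic is tightest,'' has already been disposed of separately before the lemma is invoked, which is precisely why \eqref{eq:pl2} may be quoted here. Your proposed detour through the classification of pencils and Clifford-index-computing bundles on plane curves might eventually close the case as well, but as written it is unverified, and it replaces a three-line lattice computation with a case analysis whose feasibility you have not demonstrated.
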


\begin{proof}
By Lemma \ref{useful3} we get an exact sequence as in (\ref{eq:E8}) with $\rk \F = 2$, $M = \det \F$ nontrivial, base-component free and $L \sim M + D$. The formula for $c_1(\F)^2 - 4c_2(\F)$ is immediate from \eqref{eq:E11}.
Assume, to get a contradiction, that $h^0(D) \geq 2$. Then Lemma \ref{bigger0} and (\ref{eq:E11}) yield $D^2 > 0$, $M^2 > 0$ and $d - 4 = c \geq D.M - 2 + \length(\tau)$, hence $D.M \leq d -2$. From the Hodge index theorem $D^2M^2 \leq (D.M)^2 \leq (d - 2)^2$, hence, as $D^2 \geq 2$, $M^2 \geq 2$ we have $D^2 + M^2 \leq \frac{1}{2}(d - 2)^2 + 2$ with equality if and only if either $D^2 = 2$ or $M^2 = 2$. Also
\begin{equation}
\label{eq:E16}
d(d - 3) = L^2 = D^2 + M^2 + 2D.M \leq \frac{1}{2}(d - 2)^2 + 2 + 2(d - 2) = \frac{1}{2}d^2,
\end{equation}
whence $d = 6$ and we must have equalities all along, in particular $D.M = d - 2 = 4$ and $(D^2, M^2) = (2, 8)$ or $(8, 2)$. By the Hodge index theorem we deduce that either $L \eqv 3D$ with $D^2 = 2$ or $L \eqv 3M$ with $M^2 = 2$, but this is a contradiction since, by (\ref{eq:pl2}), $\phi(L) = 4$.
\end{proof}

\noindent {\it Continuation of the proof of Proposition} \ref{plane}.
Now consider the set
\begin{equation} 
\label{eq:Q}
Q = \{\Gamma : \Gamma > 0, \Gamma.L \leq L^2 \ \mbox{and either} \ \Gamma \ \mbox{is nodal} \
\mbox{or} \ |2\Gamma| \ \mbox{is a genus one pencil} \}.
\end{equation}

We note that $Q$ is a finite set Q by standard arguments. We define
\begin{equation} 
\label{eq:pi}
\Pi = \bigcup\limits_{\Gamma \in Q} \Gamma.
\end{equation}
Then we have

\begin{lemma}
\label{xylemma}
If $h^0(\E) \geq 6$ then there are two distinct points $x$ and $y$ on $S$ lying outside of $\Pi$ and a section $s$ of $\E$ vanishing at $x$ and $y$.
\end{lemma}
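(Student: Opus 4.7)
My plan is a determinantal argument exploiting the rank-$3$ structure of $\E$: starting from a chosen point $x \in S \setminus \Pi$, I would produce a $3$-dimensional subspace $V$ of sections vanishing at $x$, form the induced morphism of rank-$3$ sheaves $\varphi \colon V \otimes \O_S \to \E$, and then extract the second point $y$ from the zero divisor in $|L|$ of the determinantal section $\det \varphi$.

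First I would pick any $x \in S \setminus \Pi$, which exists since $\Pi$ is a proper closed subset of $S$. The subspace $V_x := \Ker(H^0(\E) \to \E_x)$ of sections vanishing at $x$ has dimension at least $h^0(\E) - 3 \geq 3$, so I may choose a $3$-dimensional $V \subseteq V_x$. The inclusion $V \hookrightarrow H^0(\E)$ induces a morphism $\varphi \colon V \otimes_{\CC} \O_S \to \E$ between sheaves of rank $3$, and taking determinants yields a section $\sigma := \det \varphi \in H^0(\det \E) = H^0(L)$. Since every $s \in V \subseteq V_x$ vanishes at $x$, the fiber $\varphi_x$ is the zero map, so in particular $\sigma(x) = 0$.

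I would then split into two cases. If $\sigma \equiv 0$, then $\varphi$ has rank at most $2$ at every point of $S$, so for every $y \in S$ the fiber $\varphi_y \colon V \to \E_y$ has non-trivial kernel; choosing any $y \in S \setminus (\Pi \cup \{x\})$ (which is non-empty) and any nonzero $s \in \Ker(\varphi_y)$, we obtain $s \in V \subseteq V_x$ with $s(x) = s(y) = 0$. If instead $\sigma \not\equiv 0$, its zero divisor $D \in |L|$ contains $x$; let $D_x$ be an irreducible component of $D$ through $x$. Since $x \notin \Pi$, necessarily $D_x \not\subseteq \Pi$, so $D_x \cap \Pi$ is a finite set and $D_x \setminus (\Pi \cup \{x\})$ is cofinite in the irreducible curve $D_x$, hence infinite. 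For any $y$ in this set, the rank of $\varphi_y$ is at most $2$, producing a nonzero $s \in V \subseteq V_x$ with $s(x) = s(y) = 0$.

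I do not anticipate a serious technical obstacle: in either case the conclusion follows directly from the determinantal construction. The one point deserving explicit verification is that in the second case the component $D_x$ really contributes infinitely many points outside $\Pi \cup \{x\}$, which is immediate from $x \in D_x \setminus \Pi$ forcing $D_x$ to differ from every irreducible component of $\Pi$.
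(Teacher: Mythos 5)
Your argument is correct and is essentially the same as the paper's: the paper simply cites the proof of (2.10) in Green--Lazarsfeld \cite{gl1}, which is exactly this degeneracy-locus argument (take the $\geq 3$-dimensional space of sections vanishing at $x\notin\Pi$, form the determinant of the evaluation map $V\otimes\O_S\to\E$, and extract $y$ from its zero divisor, or from anywhere if the determinant vanishes identically). Your case split and the observation that the component of the degeneracy divisor through $x$ cannot lie in $\Pi$ are precisely the points needed to run that proof verbatim here.
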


\begin{proof}
This follows,  almost verbatim, from the proof of \cite[(2.10)]{gl1}.
\end{proof}

\renewcommand{\proofname}{Conclusion of the proof of Proposition \ref{plane}}
\begin{proof}
Consider the two cases
\begin{itemize}
\item[(I)] $h^0(\E) \geq 6$ (in particular if $d \geq 7$),
\item[(II)] $d = 6$ and $h^0(\E) = 5$
\end{itemize}
and choose a nonzero section $s$ of $H^0(\E)$ subject to the following conditions:
\begin{itemize}
\item[(I)] $s \in H^0(\E)$ vanishes on $x$ and $y$ as in Lemma \ref{xylemma},
\item[(II)] $s \in H^0(\E)$ vanishes on $E$, where $E.L = \phi(L) = 4$.
\end{itemize}
The existence of such a section follows by Lemma \ref{xylemma} in case (I) above, while, in case (II), it follows from  \eqref{eq:eca} twisted by $\O_S(- E)$ and by $h^0((\N_{C/S} - A) - E_{|C}) = h^0(\omega_C(- A) - (E+K_S)_{|C}) \geq h^1(A) - \phi(L) \geq 2$.
Now by Lemma \ref{useful4} we have an exact sequence as in (\ref{eq:E8}) with $h^0(D) = 1$, in particular $D^2 \leq 0$ by Riemann-Roch, and $c_1(\F)^2 - 4c_2(\F) = d(d - 7) + 2D.L - 3D^2 + 4 \length(\tau) \geq d(d - 7) + 2D.L + 4 \length(\tau)$.
Let $\Gamma$ be an irreducible component of $D$. Then either $\Gamma$ is nodal or $\Gamma^2 \geq 0$. In the latter case $h^0(\Gamma) \leq h^0(D) = 1$ therefore $h^0(\Gamma) = 1$ and $\Gamma^2 = 0$ by Riemann-Roch. Then $\Gamma$ is indecomposable of canonical type and by
\cite[Prop.3.1.2]{cd} we get that $|2\Gamma|$ is a genus one pencil. Since $D - \Gamma \geq 0$ and $L - D > 0$ we get, by the nefness of $L$, that $L.\Gamma \leq L.D \leq L^2$, therefore $\Gamma \in Q$.

In case (I), by the choice of $x$ and $y$, we have that $x \not\in D$, $y \not\in D$ hence $x, y \in \Supp(\tau)$. Therefore $\length(\tau) \geq 2$ and $c_1(\F)^2 - 4c_2(\F) \geq d(d - 7) + 2D.L + 8 \geq 2$. In case (II) $D \geq E$, whence $D.L \geq E.L = 4$ and again $c_1(\F)^2 - 4c_2(\F) \geq 2$.

As in the proof of \cite[Prop.3.1]{kl1} there are two line bundles $M_1$ and $M_2$ and a zero-dimensional subscheme $W \subset S$ fitting in an exact sequence
\[ 0 \hpil M_1 \hpil \F \hpil \I_W \* M_2 \hpil 0, \]
with $M = \det \F \sim M_1 + M_2$ and such that either $M_1 \geq M_2$ or $W = \emptyset$ and the sequence splits. Moreover we find $(M_1 - M_2)^2 = c_1(\F)^2 - 4c_2(\F) + 4 \length(W) \geq 2$. Hence, in any case, without loss of generality, we can assume, by Riemann-Roch, that $M_1 > M_2$.
Recall that $M_2$ is base-component free and nontrivial, whence $M_2$ is nef with $h^0(M_2) \geq 2$.

{\bf Case (I)}. We have $c_2(\F) = M_1.M_2 + \length(W)$, whence by (\ref{eq:E11}),
\begin{equation}
\label{eq:E18}
c =  D.M + \length(\tau) + c_2(\F) - 4 \geq D.M_1 + D.M_2 + M_1.M_2 - 2.
\end{equation}
Assume first that $M_2^2 = 0$. Then $M_2 \sim mP$ for an elliptic pencil $|P|$ and an integer $m \geq 1$ and $M_2.(D + M_1) = M_2.L = mP.L \geq 2m\phi(L)$. Then (\ref{eq:E18}) yields $c \geq D.M_1 + 2m\phi(L) - 2 \geq D.M_1 + 2\phi(L) - 2$, whence $D.M_1 \leq -1$ by (\ref{eq:E2b}) and
it follows that $D > 0$. Since $D.L = D.M_1 + D.M_2 + D^2 \leq -1 + D.M_2 + D^2$ and $L$ is nef we must have $D.M_2 + D^2 \geq 1$. Recall that $D^2 \leq 0$. Now if $D.M_2 \leq 2$, then $D^2 = 0$ and $2 \leq \phi(L) \leq D.L \leq 1$ a contradiction. Hence $D.M_2 \geq 3$ and $(D + M_2 + K_S)^2
\geq 4$, $h^0(D + M_2 + K_S) \geq 3$ by Riemann-Roch and \eqref{eq:E18} gives
$c \geq D.M_2 + (D + M_2).M_1 - 2 \geq (D + M_2).M_1 + 1$, contradicting Lemma \ref{cliffhanger}.

Then $M_2 ^2 > 0$, whence by Riemann-Roch, $h^0(M_2) \geq 2$, $h^0(M_2 + K_S) \geq 2$ and also $h^0(M_1) \geq h^0(M_2) \geq 2$, $h^0(M_1 + K_S) \geq h^0(M_2 + K_S) \geq 2$. By (\ref{eq:E18}) we have $c \geq D.M_1 - 2 + (D + M_1).M_2$, hence by Lemma \ref{cliffhanger} we have $D.M_1 \leq 2$ and similarly $D.M_2 \leq 2$.

If $D.M_1 \leq -1$ then $D > 0$. Therefore $D.L = D.M_1 + D.M_2 + D^2 \leq 1 + D^2$, a contradiction both if $D^2 = 0$ and if $D^2 \leq -2$. Hence $D.M_1 \geq 0$ and similarly $D.M_2 \geq 0$. Then $d - 4 = c \geq (D + M_1).M_2 - 2$ hence $(D + M_1).M_2 \leq d - 2$ and similarly $(D + M_2).M_1 \leq d - 2$. From Lemma \ref{cliffhanger} we deduce that $h^1(D + M_2) \leq 1$, whence $(D + M_2)^2 \geq 0$ by Riemann-Roch and, if equality occurs, we get the contradiction $2 \phi(L) \leq (D + M_2).L = (D + M_2).M_1 \leq d - 2 = \gon(C) - 1$. This shows that $(D + M_2)^2 > 0$ and similarly $(D + M_1)^2 > 0$.

From the Hodge index theorem we get $(D + M_1)^2M_2^2 \leq ((D + M_1).M_2)^2 \leq (d - 2)^2$, so that $(D + M_1)^2 + M_2^2 \leq \frac{1}{2}(d - 2)^2 + 2$ and we get the same contradiction as in (\ref{eq:E16}) above and the following lines.
This concludes the proof of Proposition \ref{plane} in Case (I).

{\bf Case (II)}. First write $M_1 \sim M_2 + M_3$ with $M_3 > 0$ and $M_3^2 > 0$ and set $l = \length(\tau) + \length(W)$. We have $c_2(\F) = M_1.M_2 + \length(W)$, whence by (\ref{eq:E11}) we get
\begin{eqnarray}
\label{eq:E19}
\nonumber 2 = \ c & = & D.M + \length(\tau) + c_2(\F) - 4 = \\
& = & D.M_1 + D.M_2 + M_1.M_2 - 4 + \length(\tau) + \length(W) = \\
\nonumber & = & 2D.M_2 + D.M_3 + M_2^2 + M_2.M_3 - 4 + l.
\end{eqnarray}
Moreover, by our assumptions, (\ref{eq:E8}) and $h^2(D) = 0$ we have
\begin{eqnarray*}
5 = h^0(\E) & \geq & \chi(D) + h^0(\F) - \length(\tau) \geq \\
& \geq & \chi(D) + \chi(M_1) + h^0(M_2) - \length(W)  - \length(\tau) = \\
& = & 3 + \frac{1}{2}D^2 + M_2^2 + \frac{1}{2}M_3^2 + M_2.M_3 + h^1(M_2) - l
\end{eqnarray*}
whence, combining with (\ref{eq:E19}),
\begin{eqnarray}
\label{eq:bis}
\nonumber 4 = \phi(L) = E.L & \leq & D.L = D^2 + 2D.M_2 + D.M_3 \leq \\
& \leq & 8 + \frac{1}{2}D^2 - 2M_2^2 - \frac{1}{2}M_3^2 - 2M_2.M_3 - h^1(M_2).
\end{eqnarray}

Assume first $M_2^2 = 0$. Then $M_2 \sim mP$ for an elliptic pencil $|P|$ and an integer $m \geq 1$ and we have $h^1(M_2) = m$, hence, since $M_3^2 \geq 2$ and $P.M_3 \geq 2\phi(M_3) \geq 2$, (\ref{eq:bis}) yields the contradiction
$4 \leq 7 + \frac{1}{2}D^2 - 2m P.M_3 - m \leq 7 - 5m \leq 2$.

This shows that $M_2^2 > 0$, hence $h^1(M_2) = 0$, $M_2.M_3 \geq 2$ and (\ref{eq:bis}) yields the contradiction
$4 \leq - \frac{1}{2}M_3^2 \leq -1$.
This concludes the proof of case (II) and of Proposition \ref{plane}.
\end{proof}
\renewcommand{\proofname}{Proof}

\begin{rem}
\rm{The fact that there are no smooth plane sextics on an Enriques surface answers positively the question raised in \cite[Rem.3.9]{glm1}. Also, for the same reason, in the latter article, Lemma 3.1 is no longer needed.}
\end{rem}

To end the section we will show a result that will have applications in the study of Gaussian maps of these curves (\cite[Proof of Prop.5.14]{klGM}).

\begin{prop}
\label{g^2_6particolari}
Let $L$ be a base-point free line bundle on an Enriques surface with $(L^2, \phi(L)) = (14, 3)$ or $(16, 2)$. Then the general curve in $|L|$ possesses no base-point free complete $g^2_6$.
\end{prop}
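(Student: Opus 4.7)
I argue by contradiction: suppose the general smooth $C \in |L|$ carries a base-point free complete $g^2_6$, that is, a line bundle $A$ on $C$ with $h^0(A) = 3$, $\deg A = 6$, whence $\Cliff A = 2$. By Riemann-Roch on $C$, $\chi(\N_{C/S} - A) = L^2 - g - 5 \geq 1$ in both cases, so $h^0(\N_{C/S} - A) > 0$ and the rank-$3$ Lazarsfeld bundle $\E = \E(C,A)$ is defined, fits in \eqref{eq:eca}, is globally generated off a finite set, and satisfies $\det \E = L$, $c_2(\E) = 6$, $h^0(\E) = 3 + h^0(\N_{C/S}-A)$.

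The main step is to apply Lemma \ref{useful3} to a nonzero section $s \in H^0(\E)$ chosen so that the divisorial part $D$ of its zero locus satisfies $h^0(D) \geq 2$. Using the largeness of $h^0(\E)$ together with an argument in the spirit of Lemma \ref{xylemma} (or Lemma \ref{lemma:good} applied to a suitable zero-dimensional subscheme), one prescribes vanishing of $s$ so as to force $D \geq F$ for some effective $F$ with $h^0(F) \geq 2$ (e.g.\ a member of a genus-one pencil $|2E_0|$, where $E_0>0$, $E_0^2=0$, $E_0.L = \phi(L)$). Lemma \ref{useful3} then provides
\[
0 \to \O_S(D) \to \E \to \F \to \tau \to 0,
\]
with $\F$ locally free of rank $2$, $M := \det \F \sim L - D$ base-component free and nontrivial, $h^0(\F^*) = h^1(\F^*) = 0$, and the Clifford identity $D.M + \length(\tau) + c_2(\F) = 6$.

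In the case $(L^2,\phi(L)) = (14,3)$ we have $c = \Cliff A = 2 \leq 2\phi(L) - 3 = 3$, so Lemma \ref{bigger0} applies and yields $D^2, M^2 > 0$, $D.M \leq 4$, $-2 \leq c_2(\F) - 4 \leq 0$, $h^1(M) = 0$, $h^1(D) \leq 1$. Combined with $L \sim D + M$, $L^2 = 14$, the Hodge-index inequality $D^2 M^2 \leq (D.M)^2$, and the Clifford identity, a short enumeration forces $D.M = 4$, $c_2(\F) = 2$, $\length(\tau) = 0$, and $\{D^2, M^2\} = \{2,4\}$. I would then apply Proposition \ref{prop:bassoc}(i) to the rank-$2$ bundle $\F$ (which has $c(\F) = c_2(\F)-2 = 0$ and satisfies the needed hypotheses by Lemma \ref{useful3}), extract the structural information on $M$, and combine it with $\phi(L) = 3$ and the decomposition $L = D + M$ to produce an elliptic divisor $F_0 > 0$ with $F_0.L < 3$, contradicting $\phi(L) = 3$. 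In the case $(L^2,\phi(L)) = (16,2)$, Lemma \ref{bigger0} is unavailable since $c > 2\phi(L) - 3$, but an analogous argument using Proposition \ref{prop:bassoc} applied directly to $\F$, together with (when $c(\F) \geq 2$) the secondary destabilization $0 \to M_1 \to \F \to \I_W \otimes M_2 \to 0$ from its proof, yields the parallel contradiction, since none of the explicit classifications in Proposition \ref{prop:bassoc}(i)-(ii) are compatible with $M = L - D$, $L^2 = 16$, $\phi(L) = 2$.

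The main obstacle is extracting the final numerical contradictions: the cases $h^0(D) \leq 1$ (or $D = 0$) and the $(16,2)$-case must be checked by hand against the explicit list in Proposition \ref{prop:bassoc}, and in each subcase an elliptic divisor $F_0$ with $F_0.L < \phi(L)$ must be extracted from the structure of $D$, $M$ or from the secondary destabilization of $\F$. The $(16,2)$-case is the more delicate of the two, since the weaker Clifford bound forces one to allow larger values of $c_2(\F)$ and hence a richer secondary analysis.
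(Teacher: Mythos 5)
Your overall framework (the rank-$3$ Lazarsfeld bundle $\E(C,A)$, Lemma \ref{useful3}, and a Bogomolov-type analysis of the rank-$2$ quotient $\F$) is the same as the paper's, but the pivotal step --- ``one prescribes vanishing of $s$ so as to force $D \geq F$ for some effective $F$ with $h^0(F) \geq 2$'' --- fails in the case $(L^2,\phi(L))=(14,3)$, and this removes the foundation of your Case 1. The only available lower bound is $h^0(\E(-F)) \geq h^0((\N_{C/S}-A)(-F_{|C})) \geq h^1(A) - F.L$, and here $h^1(A)=g-4=4$, while any effective $F$ with $h^0(F)\geq 2$ satisfies $F.L\geq 2\phi(L)=6$ (if $F^2=0$, $F$ is a multiple of an elliptic pencil) or $F.L\geq \sqrt{2L^2}>5$ (if $F^2>0$); so the bound is negative and no such section is guaranteed. (Lemma \ref{xylemma} is also out of reach: $\chi(\N_{C/S}-A)=g-7=1$, so one only gets $h^0(\E)\geq 4$, not $\geq 6$.) The best one can do is $h^0(\E(-E))\geq h^1(A)-\phi(L)=1$ with $E$ primitive isotropic, giving $D\geq E$ with $h^0(E)=1$; hence the hypothesis $h^0(D)\geq 2$ of Lemma \ref{bigger0} cannot be arranged and the case $h^0(D)=h^0(D+K_S)=1$, $D^2=0$ must be confronted directly. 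That is precisely where the paper's work lies: its case (1A) (the case you treat) is dispatched quickly, while the bulk of Case 1 is subcase (1B), requiring the computation $\chi(\F\otimes\F^*)=2D.L+4l-6$, a secondary destabilization of $\F$, and Claim \ref{cl:9.1'} producing an isotropic $F$ with $h^0(\F(-F))>0$. None of this appears in your proposal. Note also that in your forced endgame $M^2\in\{2,4\}$, Proposition \ref{prop:bassoc}(i) yields no contradiction (it only gives $M^2\leq 8$); the paper instead proves $h^1(-D)=0$, deduces $\O_C(M)=A$, and contradicts $M.L=8\neq 6=\deg A$.

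For $(L^2,\phi(L))=(16,2)$ the section vanishing on $2E$ does exist ($h^1(A)=5>4=2E.L$), but, as you observe, Lemma \ref{bigger0} is unavailable there, and ``an analogous argument'' is not supplied: the paper's Case 2 needs three separate claims ($M^2=0\Rightarrow M\sim P$ with $c_2(\F)=2$; $D^2=0\Rightarrow D\sim 2E$; $h^1(-D)=0$) and, crucially, uses the \emph{generality of $C$ in $|L|$} repeatedly --- the final contradiction is that the two base points of $|N'|$ (or of $|M|$ when $\phi(M)=1$) would have to lie on $C$, which a general member of $|L|$ avoids. Your proposal never invokes generality after the opening sentence, yet the statement is only asserted for the general curve, so generality must enter the argument somewhere; as written, your proof would purport to exclude a $g^2_6$ on \emph{every} smooth curve in $|L|$, which is more than the method can deliver.
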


\begin{proof}
Assume to get a contradiction that a general smooth irreducible curve $C \in |L|$ has a base-point free line bundle $A$ with $h^0(A) = 3$ and $\deg A = 6$. Set $\E = \E(C,A)$ as usual.

{\bf Case 1: $(L^2, \phi(L)) = (14, 3)$.}

Take a nef $E > 0$ with $E.L = 3$ and $E^2 = 0$. Then from  \eqref{eq:eca}, using $h^1(- E) = 0$, we get
\begin{equation}
\label{eq:8.5}
h^0(\E(- E)) = h^0(\omega_C - A - (E+K_S)_{|C}) \geq h^1(A) - 3 = 1,
\end{equation}
so that there is a nonzero section in $H^0(\E)$ which vanishes along $E$. By Lemma \ref{useful3} we get a sequence as in (\ref{eq:E8}) with $D \geq E$ and a decomposition $L \sim D + M$ where $M = \det \F$. Using the same notation as in that lemma, we have, for $l = \length(\tau)$,
\begin{equation}
\label{eq:8.10}
6 = c_2(\E) = D.M + c_2(\F) + l.
\end{equation}
If $M^2 = 0$, then $M \sim mP$ for an elliptic pencil $P$ and an integer $m \geq 1$. We have $P.D = P.L \geq 2\phi(L) = 6$, whence from (\ref{eq:8.10}) and \cite[Prop.3.2]{kn2} we get the contradiction
\[ 6 = D.M + c_2(\F) + l \geq mP.D - 2m + 4 \geq 4m + 4 \geq 8. \]
Hence $M^2 >0$ and by \cite[Prop.3.2]{kn2} we have $c_2(\F) \geq 2$.
Combining with (\ref{eq:8.10}) we get
\begin{equation}
\label{eq:8.15}
D.M \leq 4 - l \leq 4.
\end{equation}
We divide the rest of the proof of Case 1 into the two subcases:
\begin{itemize}
\item[(1A)] $h^0(D) \geq 2$ or $h^0(D + K_S) \geq 2$,
\item[(1B)] $h^0(D) = h^0(D + K_S) =1 $.
\end{itemize}

{\bf Case (1A):} We have $D.L \geq 2\phi(L) = 6$, whence by (\ref{eq:8.15}) we have
\begin{equation}
\label{eq:8.17}
D^2 = D.(L - M) \geq 2.
\end{equation}

We now claim that $h^1(- D) = 0$. Indeed, if not, by \cite[Cor.2.5]{klvan}, there exists a $\Delta > 0$ with $\Delta^2 = -2$ and $\Delta.D \leq -2$. Then
$\Delta.M = \Delta.(L - D) \geq 2$, since $L$ is nef. It follows that $(M + \Delta)^2 \geq 4$ and, by \cite[Lemma2.1]{klvan} and (\ref{eq:8.15}), $0 \leq (M + \Delta).D \leq 2$, but this contradicts the Hodge index theorem.
Therefore, from
\begin{equation}
\label{eq:8se}
0 \hpil - D \hpil M \hpil \O_C(M) \hpil 0
\end{equation}
and Lemma \ref{useful3} we find
\begin{equation}
\label{eq:8.21}
h^0(M) = h^0(\O_C(M)) \geq h^0(A) = 3,
\end{equation}
hence, in particular
\begin{equation}
\label{eq:8.21'}
M^2 \geq 4.
\end{equation}
From (\ref{eq:8.15}) and (\ref{eq:8.17}) and the fact that $L^2 = D^2 + M^2 + 2D.M \geq 6 + 2D.M$ we see that if $D.M = 4$ then $D^2 = 2$ and $M^2 = 4$. At the same time we see from the Hodge index theorem that $D.M \leq 3$ implies $D^2 = 2$ and $M^2 = 4$, which yields $L^2 \leq 12$, a contradiction. Hence $D.M = 4$, $D^2 = 2$ and $M^2 = 4$. In particular $M.L = 8$. It follows that $\phi(M) = 2$, because if $\phi(M) = 1$, by \cite[Lemma2.4]{kl1}, we can write $M \sim 2M_1 + M_2$ with $M_i > 0$, $M_i^2 = 0$ and $M_1.M_2 = 1$. But then $M.L \geq 3\phi(L) = 9$, a contradiction. Therefore $\phi(M) = 2$, so that $M$ is base-point free by \cite[Thm.4.4.1]{cd} and $\O_C(M)$ is base-point free as well. But by (\ref{eq:8.21}) we have $h^0(\O_C(M)) = h^0(A)$, and since $\O_C(M) \geq A$ by Lemma \ref{useful3}, we must have $\O_C(M) = A$, whence $M.L = \deg A = 6$, a contradiction.

{\bf Case (1B):} We have $D^2 \leq 0$ and $D.L \geq 3$ since $D \geq E$. At the same time (\ref{eq:8.15}) gives $D^2 = D.L - D.M \geq -1$, whence $D^2 = h^1(D) = h^1(D + K_S) = 0$,
the latter vanishings from Riemann-Roch. From (\ref{eq:8se}) we find that (\ref{eq:8.21}) and (\ref{eq:8.21'}) are still valid. Moreover
\begin{equation}
\label{eq:8.25}
D.M = D.L \geq 3.
\end{equation}

By (\ref{eq:8.10}) we get
\begin{eqnarray}
\label{eq:8.36}
\chi(\F \* \F^*) & = & c_1(\F)^2 - 4c_2(\F) + 4 = M^2 - 4(6 - D.M - l) + 4 = \\
\nonumber & = & (L - D)^2 + 4D.M + 4l - 20 = 2D.L + 4l - 6 \geq 0
\end{eqnarray}
where we have used (\ref{eq:8.25}). Hence we have the following two possibilities:
\begin{itemize}
\item[($\alpha$)] $\chi(\F \* \F^*) = 0$,
\item[($\beta$)] $\chi(\F \* \F^*) \geq 2$
\end{itemize}
We will treat these two cases separately.

{\bf Case (1B)($\alpha$):} By (\ref{eq:8.36}) and (\ref{eq:8.25}) we
must have
\begin{equation}
\label{eq:8.26}
D.M = D.L = 3 \; \mbox{and} \; l = 0,
\end{equation}
and it follows that
\begin{equation}
\label{eq:8.26'}
M^2 = L^2 - 2D.L =8  \; \mbox{and} \; \phi(M) = 2.
\end{equation}
Indeed, the latter follows as above, because if $\phi(M)=1$, by \cite[Lemma2.4]{kl1} we can write $M \sim 4M_1 + M_2$ with $M_i > 0$, $M_i^2 = 0$ and $M_1.M_2 = 1$, but then $M.L \geq 5 \phi(L) = 15 > L^2$, a contradiction.
From (\ref{eq:8.10}) we find
\begin{equation}
\label{eq:8.27}
c_2(\F) = 3.
\end{equation}
As in (\ref{eq:8.5}) we find $h^0(\E(-D+K_S)) \geq 1$ and from (\ref{eq:E8}) with $\tau = \emptyset$ we find $h^0(\F(- D + K_S)) = h^0(\E(- D + K_S))$, that is there is $s' \in H^0(\F)$ vanishing on $D + K_S > 0$. Saturating we get
\[ 0 \hpil \O_S(D_1) \hpil \F \hpil \I_X \* M_1 \hpil 0, \]
for an effective divisor $D_1 \geq D + K_S > 0$, $M_1$ a line bundle on $S$ and $X$ a $0$-dimensional subscheme of $S$. Moreover, as $M_1$ is a quotient of $\E$ off a finite set and $h^2(M_1 + K_S) \leq h^2(\F \* \omega_S) \leq h^2(\E \* \omega_S) = 0$ we have
\begin{eqnarray}
\label{eq:8.32}
M_1 \; \mbox{is effective, nontrivial, base-component free and } \\
\nonumber M = D_1 + M_1, \ \Bs |M_1| \sub X \cup \Bs |\N_{C/S} - A| \sub X \cup C.
\end{eqnarray}
Moreover, from (\ref{eq:8.27}) we get
\begin{equation}
\label{eq:8.31}
3 = c_2(\F) = D_1.M_1 + \length(X) \geq D_1.M_1.
\end{equation}
If $M_1^2 = 0$, then $M_1.D_1 = M_1.M \geq 2\phi(M) = 4$ from (\ref{eq:8.26'}), contradicting (\ref{eq:8.31}). Hence, by the nefness of $M_1$, we have
$M_1^2 \geq 2, \; h^1(M_1) = h^1(M_1 + K_S) = 0$.
Since $D_1 \geq D + K_S$ and $M$ is nef we have, using (\ref{eq:8.31}),
$3 = D.M \leq D_1.M = D_1^2 + D_1.M_1 \leq D_1^2 + 3$,
whence $D_1^2 \geq 0$.

If $\phi(M_1) = 1$, then $|M_1|$ has two base points and since $C$ is general in its linear system, it cannot contain any of these two points, whence $\length(X) \geq 2$ by (\ref{eq:8.32}). Then $D_1.M_1 \leq 1$ by (\ref{eq:8.31}), whence $D_1.M_1 = 1$ and $D_1^2 = 0$ by the Hodge index
theorem. But then $D_1.M = D_1.M_1 = 1$, contradicting (\ref{eq:8.26'}).
Hence $\phi(M_1) \geq 2$, so that $M_1^2 \geq 4$ and $D_1.M_1 \geq \phi(M_1) \geq 2$. But then (\ref{eq:8.26'}) implies $D_1^2 = 0$, $M_1^2 = 4$ and $D_1.M_1 = 2$, which implies $D_1.M = D_1.M_1 = 2$, a contradiction since $D_1 \geq D + K_S$ and (\ref{eq:8.26}) imply $D_1.M \geq D.M = 3$.

{\bf Case (1B)($\beta$):} By (\ref{eq:8.36}) we have that $c_1(\F)^2 - 4c_2(\F) \geq - 2$, whence, as in the proof of \cite[Prop.3.1]{kl1}, there are two line bundles $N$ and $N'$ and a zero-dimensional subscheme $X \subset S$ fitting in an exact sequence
\begin{equation}
\label{eq:dm}
0 \hpil N \hpil \F \hpil \I_X  \* N' \hpil 0
\end{equation}
with $M \sim N + N'$ and $N'$ is base-component free and nontrivial. Moreover, again as in the proof of \cite[Prop.3.1]{kl1}, it can be easily deduced that two cases are possible: (i) $c_1(\F)^2 - 4c_2(\F) \geq 0$ and either $N \geq N'$ or $X = \emptyset$, \eqref{eq:dm} splits and also $N$ is base-component free and nontrivial; (ii) $c_1(\F)^2 - 4c_2(\F) = -2$ and $N'.N = c_2(\F), X = \emptyset$ and for every $\Delta \geq 0$ such that $h^0(\F(-\Delta)) > 0$ we can choose $N \geq \Delta$.
 
If we are in case (ii) then, by (\ref{eq:8.36}) and (\ref{eq:8.25}), we must have $D.M = D.L = 4 \; \mbox{and} \; l = 0$
and it follows that
\begin{equation}
\label{eq:8.260'}
M^2 = L^2 - 2D.L = 6, \; c_2(\F) = 2, \; M.L = M.D + M^2 = 10 \; \mbox{and} \; \phi(M) = 2,
\end{equation}
for if $\phi(M) = 1$, then, by \cite[Lemma2.4]{kl1}, we can write $M \sim 3F_1 + F_2$ with $F_i > 0$, $F_i^2 = 0$ and $F_1.F_2 = 1$, so that $M.L \geq 4\phi(L) = 12$, a contradiction.

\begin{claim}
\label{cl:9.1'}
There exists a divisor $F > 0$ such that $F^2 = 0$, $F.M = 2$ and $h^0(\F(- F)) > 0$.
\end{claim}
\begin{proof}
By \cite[Lemma2.4]{kl1} we can write $L \sim 2E + E_1 + E_2$ with $E_i > 0$, $E_i^2 = 0$, $i = 1, 2$ and $E.E_1 = 2$, $E.E_2 = E_1.E_2 = 1$. Moreover for any $E' > 0$ such that $E'.L = 3$ we have that either $E' \eqv E$ or $E' \eqv E_2$. Now $M^2 = 6$, $\phi(M) = 2$ hence
by \cite[Lemma2.4]{kl1} we can write $M \sim F_1 + F_2 + F_3$ with $F_i > 0$, $F_i^2 = 0$, and $F_i.F_j = 1$ for $1 \leq i < j \leq 3$. Since $10 = L.M = L.F_1 + L.F_2 + L.F_3 \geq 3 \phi(L) = 9$ we can assume that $L.F_1 = L.F_2 = 3$ hence that $F_1 \eqv E$, $F_2 \eqv E_2$. For any $F' \eqv F_2$ we have that $F' > 0$, $(F')^2 = 0$, $F'.M = 2$, $F'.L = 3$. Now  $3 = (F'+K_S).L < 2\phi(L)$, whence $h^1(\O_S(F'+K_S))= h^1(\O_S(- F')) = 0$ and, as in (\ref{eq:8.5}), we deduce that $h^0(\E(- F')) > 0$. Moreover (\ref{eq:E8}) with $\tau = \emptyset$ gives
\[0 \hpil \O_S(D - F') \hpil \E(C,A)(- F') \hpil \F (- F') \hpil 0. \]
Hence we will be done if we prove that either $h^0(D - F_2) = 0$ or $h^0(D - F_2 + K_S) = 0$. Suppose therefore that $h^0(D - F_2) > 0$, so that we just need to prove that $\Gamma := D - F_2$ is a nodal cycle. Note that $L.\Gamma = L.D - L.F_2 = L.(L - M) - 3 = 1$. Now if $\Gamma \sim A + \Delta$ with $A > 0$, $\Delta \geq 0$ and $A^2 \geq 0$ then $1 = L.\Gamma = L.A + L.\Delta \geq \phi(L) = 3$, a contradiction.
\end{proof}

{\it Continuation of the proof of Proposition} \ref{g^2_6particolari}.
By Claim \ref{cl:9.1'} and \eqref{eq:8.260'} we have that $N \geq F$, $X = \emptyset$ and $N'.N = 2$. Since $\tau = X=
\emptyset$, we see from \eqref{eq:eca} and (\ref{eq:E8}) that $\Bs |N'| \sub C$. If $\phi(N') = 1$, then $\Bs |N'| = \{x, y \}$, and since $C$ is general in its linear system, $x ,y \not\in C$. Hence $\phi(N') \geq 2$.
In particular $(N')^2 \geq 4$ so that $M^2 = N^2 + (N')^2 + 2N.N' \geq N^2 + 8$, which combined with (\ref{eq:8.260'}) yields $N^2 \leq -2$, whence the contradiction $2 = F.M \leq N.M = N^2 + N.N' \leq 0$.
Therefore, we must be in case (i). Since $N'$ is base-component free and nontrivial we must have $N'.L \geq 2\phi(L) = 6$. Since either $N \geq N'$ or $N$ is base-component free and nontrivial, we have $N.L \geq 6$ as well. But $L \sim D + N + N'$ then implies $14 = L^2 = L.D + L.N + L.N' \geq L.E + 12 = 15$, a contradiction.

This concludes case (1B)($\beta$) and the proof of Proposition \ref{g^2_6particolari} when $(L^2, \phi(L)) = (14,3)$.

{\bf Case 2: $(L^2, \phi(L)) = (16, 2)$.}

Take a nef $E > 0$ with $E.L = 2$ and $E^2 = 0$. Then from  \eqref{eq:eca}, using $h^1(-2E) = 0$, we get
\[ h^0(\E( -2E)) = h^0(\omega_C - A - (2E + K_S)_{|C}) \geq h^1(A) - 4 = 1, \]
so that there is a nonzero section in $H^0(\E)$ which vanishes along an element in $|2E|$. By Lemma \ref{useful3} we get a sequence as in (\ref{eq:E8}) with $D \geq 2E$, whence with $L.D \geq 4$. Using the same  notation as in that lemma, we have
\begin{equation}
\label{eq:9.10}
6 = c_2(\E) = D.M + c_2(\F) + l,
\end{equation}
where $l:= \length(\tau)$. Moreover we have
\begin{equation}
\label{eq:9bs}
\Bs|M| \sub \Supp(\tau) \cup \Bs|N_{C/S} - A| \sub \Supp(\tau) \cup C.
\end{equation}

\begin{claim}
\label{cl:9.1}
If $M^2 = 0$, then $M \sim P$ for an elliptic pencil $P$ and $c_2(\F) = 2$.
\end{claim}
\begin{proof}
We have $M \sim mP$ for an elliptic pencil $P$ and an integer $m \geq 1$. We have $P.D = P.L \geq 2\phi(L) = 4$, whence from (\ref{eq:9.10}) and \cite[Prop.3.2]{kn2} we get
$6 = D.M + c_2(\F) + l \geq mP.D - 2m + 4 \geq 2m + 4$,
which implies $m = 1$ and $c_2(\F) = 2$, as stated.
\end{proof}

{\it Continuation of the proof of Proposition} \ref{g^2_6particolari}.
Combining Claim \ref{cl:9.1} with \cite[Prop.3.2]{kn2} for $M^2 >0$ we get, in any case,
\begin{equation}
\label{eq.new}
c_2(\F) \geq 2.
\end{equation}
Combining with (\ref{eq:9.10}) we get
\begin{equation}
\label{eq:9.15}
D.M \leq 4 - l \leq 4 \; \mbox{and} \; D^2 = D.(L - M) \geq 0.
\end{equation}

\begin{claim}
\label{cl:9.2}
If $D^2 = 0$, then $D \sim 2E$.
\end{claim}
\begin{proof}
We have $D \geq 2E$. Assume that $D^2 = 0$ and $D \sim 2E + \Delta$ for some $\Delta > 0$. Then $\Delta^2 = - 4E.\Delta$. Moreover we have $4 = 2E.L = D.L - \Delta.L = D.M - \Delta.L \leq 4 - \Delta.L$ by (\ref{eq:9.15}). Therefore $D.M = 4$ and $\Delta.L = 0$, in particular $\Delta^2 < 0$,
so that $E.\Delta > 0$. By (\ref{eq:9.15}) again we must have $\tau = \emptyset$. Therefore $\Bs |M| \sub C$, but since $C$ is general in its linear system, it cannot contain any of the possible base points of $M$, whence $\phi(M) \geq 2$. Therefore $\Delta.M = D.M - 2E.M \leq 0$, which means that
$E.M = 2$ and $\Delta.M = 0$. But then we get the contradiction
$\Delta.L = 2E.\Delta + \Delta^2 + \Delta.M = -2E.\Delta < 0$.
\end{proof}

\begin{claim}
\label{cl:9.3}
We have $h^1(- D) = 0$.
\end{claim}
\begin{proof}
By Claim \ref{cl:9.2} we can assume $D^2 > 0$. By \cite[Cor.2.5]{klvan}, if $h^1(- D) = h^1(D + K_S) > 0$, there exists a $\Delta > 0$ with $\Delta^2 = -2$ and $\Delta.D \leq -2$. Since $L$ is nef we must have $\Delta.M \geq 2$, whence $(M + \Delta)^2 \geq 2$ and $0 \leq (M + \Delta).D \leq 2$ by 
\cite[Lemma2.1]{klvan} and (\ref{eq:9.15}). By the Hodge index theorem we have
$4 \leq  (M + \Delta)^2 D^2 \leq ((M + \Delta).D)^2 \leq 4$,
whence we must have $\Delta.D = -2, \; \Delta.M = 2, \; M^2 = 0, \; D.M = 4, \; D^2 = 2$,
and $D \eqv M + \Delta$. But then $\Delta.D = \Delta.M + \Delta^2 = 0$, a contradiction.
\end{proof}

{\it Conclusion of the proof of Proposition} \ref{g^2_6particolari}.
From (\ref{eq:8se}) and Claim \ref{cl:9.3} we find $h^0(\O_C(M)) = h^0(M)$ and since $\O_C(M) \geq A$ by Lemma \ref{useful3}, we must have $h^0(M) \geq 3$. Hence, using Claim \ref{cl:9.1}, we find that $M^2 \geq 4$.

If $D^2 > 0$ we have $E.D \geq 1$, and since $2 = E.L = E.D + E.M$ we must have $E.D = E.M = 1$, whence $|M|$ has two base points. Since $C$ is general in its linear system it cannot contain any of these, whence $l \geq 2$, and from (\ref{eq:9.15}) we get $D.M \leq 2$. But this is impossible by the Hodge index theorem.

Hence $D^2 = 0$ and by Claim \ref{cl:9.2} we have $D \sim 2E$, whence $D.L = D.M = 4$, so that by (\ref{eq:9.10}) and \eqref{eq.new}, we have $\tau = \emptyset$ and $c_2(\F) = 2$. Moreover we have $M^2 = L^2 - 2L.D = 8$, whence $c_1(\F)^2 - 4c_2(\F) = 0$. Therefore we have an exact sequence
\begin{equation}
\label{eq:seqbog3}
0 \hpil N \hpil \F \hpil \I_X \* N' \hpil 0,
\end{equation}
as in the proof of \cite[Prop.3.1]{kl1}, with $M \sim N + N'$, $N'$ is base-component free and nontrivial and either $N \geq N'$ or $N$ is base-component free and nontrivial. Therefore, in any case, $M.N \geq 2 \phi(M)$. Moreover, from (\ref{eq:seqbog3}), we find that
\begin{equation}
\label{eq:9.25}
2 = c_2(\F) = N.N' + \length(X) \geq N.N'.
\end{equation}

Now we claim that $\phi(M) = 2$.
Indeed $1 \leq \phi(M) \leq \lfloor \sqrt{8} \rfloor = 2$. If $\phi(M) = 1$ then $|M|$ has two base points, and since $\tau = \emptyset$ we get by (\ref{eq:9bs}) that $C$ contains these two base points, a contradiction on the generality of $C$ in its linear system.

As $h^0(N') \geq 2$ we must have $8 = M^2 = M.N + M.N' \geq 4\phi(M) = 8$,
which implies $M.N = M.N'= 4$. Combined with (\ref{eq:9.25}) and the Hodge index theorem we find that $N \eqv N'$ and $N^2 = 2$. In particular, by (\ref{eq:9.25}) again $X = \emptyset$. But then $|N'|$ has two base points, which have to be contained in $\Bs|N_{C/S} - A| \sub C$, again a contradiction.

This concludes the the proof of Proposition \ref{g^2_6particolari} when $(L^2, \phi(L)) = (16,2)$.
\end{proof}
\renewcommand{\proofname}{Proof}

\section{Complete intersections of two cubics}
\label{sec:r=3}

In this section we will prove that there are no exceptional curves of Clifford dimension $3$ on an Enriques surface.

Assume, to get a contradiction, that $C$ is an exceptional curve of Clifford dimension  $3$ lying on an Enriques surface $S$. By \cite[Satz1]{ma} $C$ is isomorphic to a complete intersection of two cubics, has genus $10$, Clifford index $3$ and possesses a unique line bundle $A$ computing its Clifford dimension, that is with $\dim |A|=3$ and $\deg A=9$. Also $A$ is very ample, it satisfies $\omega_C \sim 2A$, and embeds $C$ into $\PP^3$ as a complete intersection of two cubics. Moreover $C$ has gonality $6$, it has a $1$-dimensional family of $g^1_6$'s, and every $g^1_6$ is of the form $A-Z_3$, with $Z_3$ effective and $\deg Z_3=3$ \cite[Thm.3.7]{elms}. 

Set $L = \O_S(C)$. Then $L^2 = 18$. As $\gon C=6$ and $\phi(L) \leq \sqrt{L^2}$, we have $\phi(L)=3$ or $4$.

\subsection{The case $L \sim 3B$ with $B^2=2$}

Set $\E =\E(C,A)$.

\begin{claim}
\label{cl:ecc10}
We have $h^0(\E(-B)) >0$ and $h^0(\E(-B-K_S))>0$.
\end{claim}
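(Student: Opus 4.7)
My plan is to exploit the specific structure $\omega_C \sim 2A$ to simplify the defining sequence of $\E = \E(C,A)$, and then reduce each vanishing to the effectivity of a degree $3$ divisor on $C$.

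First I would simplify \eqref{eq:eca}. Since $\omega_C \sim 2A$, adjunction gives $\N_{C/S} - A = \omega_C - A = A$, so the defining sequence reads
\[ 0 \hpil H^0(A)^{\ast} \otimes \O_S \hpil \E \hpil A \hpil 0. \]
Twisting by $\O_S(-B)$ and taking cohomology yields
\[ H^0(-B)^{\oplus 4} \hpil H^0(\E(-B)) \hpil H^0(A - B_{|C}) \hpil H^1(-B)^{\oplus 4}. \]
Since $B$ is nef (numerically $B \equiv \tfrac{1}{3}L$ and $L$ is nef) with $B^2 = 2 > 0$, Kawamata--Viehweg vanishing (or the standard vanishing for nef and big line bundles on Enriques surfaces) gives $h^0(-B) = h^1(-B) = 0$, so $h^0(\E(-B)) = h^0(A - B_{|C})$. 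The same argument applied to $B + K_S$, which is also nef and big with $(B+K_S)^2 = 2$, gives $h^0(\E(-B-K_S)) = h^0(A - (B+K_S)_{|C})$.

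The heart of the proof is then to show that $A - B_{|C}$ and $A - (B+K_S)_{|C}$ are both effective on $C$. Each of $B_{|C}$ and $(B+K_S)_{|C}$ has degree $B.C = 6$. Moreover, Riemann--Roch gives $h^0(B) \geq 2$ and $h^0(B+K_S) \geq 2$, and the restriction maps from $S$ to $C$ are injective because $h^0(B - L) = h^0(-2B) = 0$ and similarly $h^0(B + K_S - L) = 0$. Thus both restricted bundles carry pencils, i.e. each is a (possibly incomplete) $g^1_6$ on $C$. By the classification of pencils of degree $6$ on the complete intersection of two cubics \cite[Thm.3.7]{elms}, every such $g^1_6$ has the form $A - Z_3$ with $Z_3$ an effective divisor of degree $3$, so in both cases we obtain effective representatives and $h^0(A - B_{|C}), h^0(A - (B+K_S)_{|C}) \geq 1$.

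I do not expect any substantial obstacle: the argument is an application of the Lazarsfeld--Mukai sequence, and the only nontrivial input is the classification of $g^1_6$'s on $C$, which is already cited. The main things to verify carefully are the nefness of $B$ (which follows since $3B \equiv L$ is nef) and the injectivity of the restriction maps from $|B|$ and $|B+K_S|$ to $C$, both of which are immediate from $L \sim 3B$ and $B^2 > 0$.
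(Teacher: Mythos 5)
Your proof is correct and follows essentially the same route as the paper: both arguments twist the defining sequence of $\E(C,A)$ by $-B$ and $-B-K_S$, kill the $H^0$ and $H^1$ of the trivial part via nef-and-bigness of $B$, and reduce to the effectivity of $A-B_{|C}$ and $A-(B+K_S)_{|C}$, which follows from the classification of $g^1_6$'s on a complete intersection of two cubics. One small slip: on an Enriques surface $\N_{C/S} \cong \omega_C \otimes \O_C(-K_S)$, so $\N_{C/S}-A \sim A + (K_S)_{|C}$, not $A$ (and $(K_S)_{|C}$ is a nontrivial $2$-torsion class since $h^0(\O_C(K_S))=0$); consequently the twist by $-B$ computes $h^0(A-(B+K_S)_{|C})$ and the twist by $-B-K_S$ computes $h^0(A-B_{|C})$, i.e.\ your two cases are interchanged. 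Since you establish the effectivity of both $A-B_{|C}$ and $A-(B+K_S)_{|C}$, the swap is harmless and the conclusion stands.
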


\begin{proof}
 Since $B.L=6$, we have that $|\O_C(B)|$ is a $g^1_6$ on $C$, whence we have $\O_C(B) \sim A -Z_3$, for some effective $Z_3 \subset C$ of degree $3$.
Therefore 
\[ \N_{C/S}-A-\O_C(B+K_S) \sim \omega_C-A-\O_C(B) \sim A - (A-Z_3) = \O_C(Z_3) >0. \]
Tensoring \eqref{eq:eca} by $\O_S(-B+K_S)$, and using the fact that $h^1(-B+K_S)=0$ we find
$h^0(\E(-B-K_S)) = h^0(\N_{C/S}-A-\O_C(B+K_S)) >0$.
Similarly, $h^0(\E(-B)) >0$. 
\end{proof}
Pick a section $s \in H^0(\E)$ vanishing along some element of $|B|$ and denote by $D$ the largest effective divisor on which it vanishes. 
Then by Lemma \ref{useful3} we have an exact sequence
\begin{equation}
\label{eq:ecc10}
0 \hpil \O_S(D) \hpil \E \hpil \F \hpil \tau \hpil 0,
\end{equation}
where $\F$ is a locally free rank $3$ sheaf which is globally generated off a finite set contained in $C \cup \Supp \tau$, $\tau$ is a torsion sheaf supported on a finite set and $M:= \det \F$ is nontrivial, base-component free and $L \sim M+D$. 

\begin{claim} 
\label{cl:ecc11}
We have $D \sim B$, $M \sim 2B$ and $(c_2(\F), \length(\tau))=(5,0)$ or $(4,1)$.
\end{claim}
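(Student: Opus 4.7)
My approach is to apply Lemma~\ref{useful3} and Lemma~\ref{bigger0} to the exact sequence (\ref{eq:ecc10}) to extract numerical constraints on $D^2$, $M^2$, $D.M$ and $c_2(\F)$, then perform a short case analysis based on the value of $D.B$ (which is forced to be an integer in $\{2,3,4\}$ by $L \sim 3B$).

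First, since $D \geq B > 0$ gives $h^0(D) \geq h^0(B) \geq 2$, and $L^2 = 18 < 25$ forces $\phi(L) \in \{3,4\}$, so that $c = \Cliff C = 3 \leq 2\phi(L) - 3$, the hypotheses of Lemma~\ref{bigger0} are met and we obtain $D^2 > 0$, $M^2 > 0$, $D.M \leq 5$, and $c_2(\F) \in \{4,5,6\}$. From $L \sim 3B$ we have $D.B + M.B = 6$, and the Hodge index theorem applied via $L$ gives $2D^2 \leq (D.B)^2$ and $2M^2 \leq (M.B)^2$, so $D.B, M.B \geq 2$. Moreover Lemma~\ref{useful3} gives $M_{|C} \geq A$, which forces $\deg M_{|C} = 3 M.B \geq \deg A = 9$, i.e.\ $M.B \geq 3$. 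This leaves only the numerical possibilities $(D.B, M.B) = (2,4)$ or $(3,3)$.

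The hard part will be ruling out the case $(D.B, M.B) = (3,3)$. In that case $D^2 = M^2 = 4$, $D.M = 5$, the Clifford formula forces $c_2(\F) = 4$ and $\length(\tau) = 0$, and $M_{|C} \sim A$ (degree $9$ combined with $M_{|C} \geq A$); moreover $(D-B)^2 = 0$ and $(D-B).B = 1$ force $D \sim B + P$ for some primitive isotropic $P$ with $P.B = 1$, and $M \equiv 2B - P$. To derive a contradiction I plan to exploit the uniqueness of $A$ as the only $g^3_9$ on $C$: Riemann--Roch together with $h^1(M) = 0$ from Lemma~\ref{bigger0} gives $h^0(M) = 3$, so $|M|$ cuts out on $C$ a hyperplane $g^2_9$ inside $|A|$, corresponding to projection of $C \subset \PP^3$ from a point. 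Combined with the analogous computation for $D$ (which restricts to a line bundle of degree $9$ on $C$, forced by Lemma~\ref{cliffhanger} applied to $L + K_S \sim (D + K_S) + M$ to have Clifford index exactly $5$ or to coincide with $A$), the rigidity of $g^2_9$'s on a complete intersection of two cubics in $\PP^3$ should produce a numerical incompatibility, ruling out this case.

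In the remaining case $(D.B, M.B) = (2,4)$, equality in Hodge forces $D \equiv B$ numerically, and since $D \geq B$ while $h^0(K_S) = 0$ on the Enriques surface $S$ rules out $D \sim B + K_S$, the only possibility is $D \sim B$, whence $M \sim 2B$ and $D.M = 4$. Substituting into the Clifford formula $3 = \Cliff A = D.M + \length(\tau) + c_2(\F) - 6$ yields $c_2(\F) + \length(\tau) = 5$; combined with $c_2(\F) \in \{4,5,6\}$ and $\length(\tau) \geq 0$ (noting that $c_2(\F) = 6$ would demand $\length(\tau) = -1$), we are left with exactly $(c_2(\F), \length(\tau)) = (5,0)$ or $(4,1)$, as the claim asserts.
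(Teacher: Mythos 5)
Your framework coincides with the paper's: apply Lemmas \ref{useful3} and \ref{bigger0} to \eqref{eq:ecc10} to get $D^2>0$, $M^2>0$, $D.M\leq 5$ and $4\leq c_2(\F)\leq 6$, and your treatment of the case $(D.B,M.B)=(2,4)$ is correct and complete (Hodge forces $D\eqv B$, effectivity of $D-B$ together with $h^0(K_S)=0$ forces $D\sim B$, and \eqref{eq:E11} then yields $c_2(\F)+\length(\tau)=5$, hence the two stated possibilities). The genuine gap is the case $(D.B,M.B)=(3,3)$, which you explicitly leave as a plan rather than an argument (``should produce a numerical incompatibility''). The configuration $D\sim B+P$, $M\eqv 2B-P$ with $P^2=0$, $P.B=1$ is numerically consistent with everything you have established, so no purely numerical contradiction is available; and the ``rigidity of $g^2_9$'s'' you invoke does not exist: the $g^2_9$'s contained in $|A|$ are exactly the hyperplanes of $|A|^{\vee}\iso\PP^3$, a three-dimensional family, so the fact that $|M|$ cuts out one of them on $C$ contradicts nothing.

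The missing ingredient is the vanishing $h^1(-D)=h^1(D+K_S)=0$, which the paper proves by contradiction: if it fails, \cite[Cor.2.5]{klvan} yields $\Delta>0$ with $\Delta^2=-2$ and $\Delta.D\leq -2$, hence $\Delta.M\geq 2$ by nefness of $L$; then $(D-\Delta)^2\geq 4$ and $(D-\Delta).M\leq 3$, and the Hodge index theorem (after checking $D-\Delta\geq 0$) forces $D^2=M^2=2$, $D.M=5$, contradicting $L^2=18$. Once this vanishing is in place, the restriction sequence $0\to \O_S(-D)\to \O_S(M)\to\O_C(M)\to 0$ gives $h^0(M)=h^0(\O_C(M))\geq h^0(A)=4$, using $\O_C(M)\geq A$ from Lemma \ref{useful3}; by Riemann--Roch and the nefness of $M$ this forces $M^2\geq 6$, which kills $M^2=4$, i.e.\ your case $(3,3)$, outright. (The paper then also rules out $M^2=6$ via $\phi(M)=1$ before concluding $M\eqv 2D$; in your organization by $(D.B,M.B)$ that step is absorbed into the case $(2,4)$.) Your computation $h^0(M)=3$ in the $(3,3)$ case is correct, but it only becomes a contradiction after the vanishing $h^1(-D)=0$ lets you transfer the inequality $h^0(\O_C(M))\geq 4$ from $C$ to $S$; without it your argument does not close.
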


\begin{proof}
By Lemma \ref{bigger0} we also have 
\begin{equation}
\label{eq:ecc8}
D^2 >0, \; M^2 >0, \; D.M \leq 5 \; \mbox{and} \; 4 \leq c_2(\F) \leq 6.
\end{equation}

We now observe that $h^1(D+K_S)=0$. Indeed, if not, by \cite[Cor.2.5]{klvan}, there would exist a $\Delta >0$ satisfying $\Delta^2=-2$ and $\Delta.D \leq -2$. Since $L$ is nef we must have $\Delta.M \geq 2$. Hence $(D-\Delta)^2 \geq 4$ and $(D-\Delta).M \leq 3$ using \eqref{eq:ecc8}. Now if $h^0(D - \Delta) = 0$ then, by Riemann-Roch, $K_S - D + \Delta \geq 0$ and \cite[Lemma2.1]{klvan} implies that $0 \leq D.(K_S - D + \Delta) = - D^2 + D.\Delta \leq -4$, a contradiction. Therefore $D - \Delta \geq 0$, $M.(D - \Delta) \geq 0$ and the Hodge index theorem implies $(D-\Delta)^2=4$, $M^2=2$ and $(D-\Delta).M=3$,
whence $D^2=M^2=2$ and $D.M=5$, yielding the contradiction $18=L^2=(D+M)^2=14$.
From
\[ 0 \hpil \O_S(-D) \hpil \O_S(M) \hpil \O_C(M) \hpil 0 \]
we therefore find $h^0(\O_C(M))=h^0(M)$ and since $\O_C(M) \geq A$ by Lemma \ref{useful3}, we must have $h^0(M) \geq 4$, whence $M^2 \geq 6$ by Riemann-Roch and the nefness of $M$. The Hodge index theorem, \eqref{eq:ecc8} and the fact that
$D^2+M^2+2D.M=18$ now imply that $D^2=2$. 

If $M^2=6$, then $M.L=11$ and since $\O_C(M) \geq A$ with $h^0(\O_C(M))=h^0(A)$,
this means that $|M|$ has two base points, whence $\phi(M)=1$ and by \cite[Lemma2.4]{kl1} we can write
$M \sim 3E_1+E_2$ with $E_i >0$, $E_i^2=0$ and $E_1.E_2=1$. But then $M.L \geq 4\phi(L) \geq 12$, a contradiction.
Hence $M^2 \geq 8$, which implies $D.M=4$, so that $M \eqv 2D$ by the Hodge index theorem. Therefore $L \eqv 3D$, so that $M \sim 2B$.
By \eqref{eq:E11}  in Lemma \ref{useful3} we get $\length(\tau) + c_2(\F)=5$, and \eqref{eq:ecc8} yields the two stated possibilities.
\end{proof}
From \eqref{eq:ecc10} tensored by $\O_S(-B+K_S)$ we find, using Claim \ref{cl:ecc10},
\[ h^0(\F(-B+K_S)) \geq h^0(\E(-B+K_S))- h^0(K_S)=h^0(\E(-B+K_S)) >0. \]
Hence there is a section $t \in H^0(\F)$ vanishing along some element of $|B+K_S|$. Denoting by $D_1$ the largest effective divisor on which it vanishes, we get as above an exact sequence
\begin{equation}
\label{eq:ecc13}
0 \hpil \O_S(D_1) \hpil \F \hpil \G \hpil \tau_1 \hpil 0,
\end{equation}
where $\G$ is a locally free rank $2$ sheaf which is globally generated off a finite set contained in $C \cup \Supp \tau \cup \Supp \tau_1$, $\tau_1$ is a torsion sheaf supported on a finite set and $M_1:= \det \G$ is nontrivial and base-component free and $M \sim 2B \sim M_1+D_1$ (look at the proof of Lemma \ref{useful3}; the fact that $M_1$ is nontrivial follows by Porteous \label{port} (see for example \cite[proof of Lemma 1.12]{par} since $\G$ is nontrivial because, as is well-known, $H^2(\E(K_S)) = 0$, whence also $H^2(\G(K_S)) = 0$). Taking $c_2$ in \eqref{eq:ecc13} and combining with Claim \ref{cl:ecc11} we find
\begin{equation}
\label{eq:ecc12}
5 = D_1.M_1+ \length(\tau) + \length(\tau_1) + c_2(\G).
\end{equation}

If $M_1^2=0$ then $M_1 \sim kP$ for an integer $k \geq 1$ and  an elliptic pencil $P$, and $c_2(\G) \geq 4-2k$ 
by \cite[Prop.3.2]{kn2}. By \eqref{eq:ecc12} we have, using $\phi(M)=2$, the contradiction
\[ 5 \geq kP.D_1 +c_2(\G) = kP.M +c_2(\G) \geq 4k +4-2k = 2k+4 \geq 6. \]
Hence $M_1^2 >0$, so that $c_2(\G) \geq 2$ by \cite[Prop.3.2]{kn2}. Therefore $D_1.M_1 \leq 3$ by \eqref{eq:ecc12}. 
Moreover we have $D_1.M \geq B.M=4$, and since $D_1.M=D_1^2+D_1.M_1 \leq D_1^2+3$, we have 
$D_1^2 \geq 2$. Now the Hodge index theorem and the fact that $D_1^2+M_1^2+2D_1.M_1=8$ imply that $D_1 \eqv M_1$ and $D_1^2=2$. Therefore $M \sim 2B \eqv 2D_1$ and it follows that $D_1 \sim M_1 \sim B+K_S$. Now $|M_1|$ has two 
base points, and by the above they must lie in $C \cup \Supp \tau \cup \Supp \tau_1$. From  \eqref{eq:ecc12} we 
get $\length(\tau) + \length(\tau_1) \leq 1$, so that at least one of the base points of $|M_1|$, say $x$, lies on $C$. As $H^1(M_1-L)=0, |\O_C(M_1) -x|$ is a $g^1_5$, a contradiction.

\subsection{The case $L$ not divisible by $3$ in $\Pic S$} 

Let $Z \in |A|$. Recall that $\O_C(2Z) \sim \omega_C$.

From 
\begin{equation*}
0 \hpil K_S \hpil \I_Z(L+K_S)  \hpil A \hpil  0
\end{equation*}
we get
\begin{equation}
\label{eq:c1}
h^0(\I_Z(L+K_S))=h^0(A)=4
\end{equation}
and
\begin{equation}
\label{eq:c2}
\Bs |\I_Z(L+K_S)| \cap C \sub Z.
\end{equation}

We can write
\begin{equation}
\label{eq:c4'}
|\I_Z(L+K_S)| = \{M \} + \Delta, 
\end{equation}
where $\{M \}$ is the moving part which is a sublinear system   $\{M \} \sub |M|$ for some
$M$ which is without fixed components (whence nef) and $\Delta$ is the fixed divisor. Clearly
\begin{equation}
\label{eq:c5}
h^0(M) \geq \dim \{M \}+1 = h^0(\I_Z(L+K_S))=4.
\end{equation}
  
We will now use the set $\Pi$ in \eqref{eq:pi} (see also \eqref{eq:Q}).

\begin{lemma} 
\label{lemmaA}
If $Z$ does not intersect $\Pi$ then $\Delta.L=0$.
\end{lemma}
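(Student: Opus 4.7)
The plan is to analyze $|\I_Z(L+K_S)|$ by restricting to $C$. Using the exact sequence $0 \to \O_S(K_S) \to \I_Z \otimes \O_S(L+K_S) \to \I_Z \otimes \omega_C \to 0$ together with $h^0(K_S) = 0$, the identification $\I_Z \otimes \omega_C \cong \O_C(\omega_C - Z) \cong \O_C(A)$ (from $\omega_C \sim 2A$), and \eqref{eq:c1}, the restriction map $H^0(\I_Z(L+K_S)) \to H^0(A)$ is an isomorphism. Consequently $|\I_Z(L+K_S)|$ restricts bijectively to $Z + |A|$ on $C$, and since $A$ is very ample, $|A|$ is base-point-free on $C$, so the base locus of $Z + |A|$ equals $Z$. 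As $\Delta$ lies in the base locus of $|\I_Z(L+K_S)|$, this gives the inclusion $\Delta \cap C \subseteq Z$ as $0$-cycles on $C$.

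Since $L$ is nef, $\Delta . L = 0$ if and only if every irreducible component $\Gamma$ of $\Delta$ satisfies $\Gamma . L = 0$. Take such a $\Gamma$ and distinguish cases by $\Gamma^2$. If $\Gamma^2 \leq 0$, then either $\Gamma$ is nodal ($\Gamma^2 = -2$) or $\Gamma^2 = 0$, in which case $\Gamma$ is indecomposable of canonical type and $|2\Gamma|$ is a genus-one pencil by \cite[Prop.3.1.2]{cd}; noting that $\Gamma . L \leq \Delta . L \leq L^2$ (using $\Delta \leq L + K_S$ and $K_S . L = 0$), in both subcases $\Gamma \in Q$, so $\Gamma \subseteq \Pi$. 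The hypothesis $Z \cap \Pi = \emptyset$ then gives $\Gamma \cap Z = \emptyset$, and combined with $\Gamma \cap C \subseteq \Delta \cap C \subseteq Z$ this forces $\Gamma \cap C = \emptyset$, so $\Gamma . L = \Gamma . C = 0$, as desired.

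What remains is to rule out components with $\Gamma^2 \geq 2$. Such a $\Gamma$ would be big and nef on $S$ with $h^0(\Gamma) = 1 + \Gamma^2/2 \geq 2$ (using $h^1(\Gamma) = 0$ by Kawamata--Viehweg), so $\Gamma$ moves in a linear system of positive dimension. The intended contradiction is a movability argument: write a typical $D' \in |\I_Z(L+K_S)|$ as $D' = \Gamma + D''$ with $D'' \in |L+K_S - \Gamma|$, and replace $\Gamma$ by another $\Gamma' \in |\Gamma|$ passing through $Z \cap \Gamma$; the divisor $D'' + \Gamma'$ then still contains $Z$ and still belongs to $|L+K_S|$, but (for generic $D''$, since $\{M\}$ is fixed-component-free and $\Delta - \Gamma$ does not contain $\Gamma$) no longer contains the specific $\Gamma$ as a component, contradicting $\Gamma \subseteq \Delta$. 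The main obstacle is producing such a $\Gamma' \neq \Gamma$, i.e.\ verifying that $h^0(\I_{Z \cap \Gamma}(\Gamma)) \geq 2$: this is immediate when $Z \cap \Gamma = \emptyset$, but in general requires combining the Hodge-index bound $\Gamma . L \geq 6$ (from $\Gamma^2 \geq 2$, $L^2 = 18$) with the constraint $\Gamma . L \leq 9$ coming from $\Gamma \cap C \subseteq Z$, and exploiting the special structure of $Z$ as a plane section of the complete intersection $C \subset \PP^3$ embedded by $|A|$.
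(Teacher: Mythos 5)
There is a genuine gap: the case of a fixed component $\Gamma$ with $\Gamma^2\geq 2$ --- which is exactly the hard case --- is not actually disposed of. Your first two paragraphs are fine (and match the paper's starting point: $\Delta\cap C\sub Z$ from \eqref{eq:c2}, and any component with $\Gamma^2\leq 0$ lies in $\Pi$, hence misses $Z$, hence misses $C$). But your third paragraph only sketches a ``movability'' argument and then concedes that its key input, $h^0(\I_{Z\cap\Gamma}(\Gamma))\geq 2$, is unverified. In fact that input fails in the most relevant subcase: if $\Gamma^2=2$ then $h^0(\Gamma)=2$, and the Hodge index theorem forces $\Gamma.L\geq 6>0$, so $\Gamma\cap C\neq\emptyset$ and hence $Z\cap\Gamma\neq\emptyset$; since the two base points of the pencil $|\Gamma|$ lie on half-pencils (which are in $\Pi$ and therefore avoid $Z$), the points of $Z\cap\Gamma$ are not base points of $|\Gamma|$, and already one such point kills all members of $|\Gamma|$ other than $\Gamma$ itself. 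So no $\Gamma'\neq\Gamma$ exists and the replacement trick cannot be run. Appealing to ``the special structure of $Z$ as a plane section'' does not repair this.

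The paper's proof goes a different and more arithmetic route, working with the whole fixed divisor $\Delta$ rather than its components: from $\Delta\cap C\sub Z$ and $Z\cap\Pi=\emptyset$ one gets $h^0(\Delta)\geq 2$, hence $6\leq 2\phi(L)\leq\Delta.L\leq\deg Z=9$, while $h^0(M)\geq 4$ forces $M^2\geq 6$ and then $\phi(L)=3$. The Hodge index theorem and $L^2=18$ then leave only $(\Delta^2,M^2,M.\Delta)=(0,6,6)$, $(2,6,5)$ or $(2,8,4)$, and each is excluded by a specific mechanism your proposal never reaches: in the first two, $|M|$ would acquire a base scheme of positive length inside $Z$, but base points of such $|M|$ lie on half-pencils, which $Z$ avoids by (C1); in the last, $M\eqv 2\Delta$ gives $L\eqv 3\Delta$, contradicting the standing hypothesis of this subsection that $L$ is not divisible by $3$. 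Note in particular that the lemma is false without that $3$-divisibility hypothesis, so any correct proof must use it somewhere; yours does not, which is another sign the argument cannot close as written.
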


\begin{proof}
Assume to get a contradiction that $\Delta.L>0$. By \eqref{eq:c2} we have $ \Delta \cap C \sub Z$, so that, by definition of $\Pi$, $h^0(\Delta) \geq 2$, and in particular
\begin{equation}
\label{eq:c4}
6 \leq 2\phi(L) \leq \Delta.L \leq \deg Z =9.
\end{equation}

If $M^2=0$ then $M \sim mP$ for an elliptic pencil $|P|$ and an integer $m \geq 3$, by (\ref{eq:c5}). But then we get the contradiction $L^2 = \Delta.L +M.L \geq 2\phi(L) +2m\phi(L) \geq 8 \phi(L) \geq 24$. 

Hence $M^2 >0$ and from (\ref{eq:c5}) we must even have $M^2 \geq 6$. Consequently $M.L \geq 3\phi(L)$ and as above $18=L^2= \Delta.L +M.L \geq 2\phi(L)+3\phi(L)=5\phi(L)$ implies $\phi(L)=3$.

We now claim that $\Delta^2 \geq 2$. Indeed if  $\Delta^2 \leq 0$, then (\ref{eq:c4}) yields
$\Delta.M = \Delta.L - \Delta^2 \geq 6$ and since $M^2 \geq 6$ we then get $L^2=18=M.L+\Delta.L = M^2 + M.\Delta + \Delta.L \geq 18$, whence we must have equalities all the way, that is $M^2=M.\Delta=\Delta.L=6$ and $\Delta^2=0$. Then we can write
$Z = (C \cap \Delta) \cup Z_3$
with $\deg Z_3=3$. It follows that $\{M \} \sub |\I_{Z_3} \* M| \sub |M|$ and $3=\dim \{M \}= \dim |M|$, whence $|M|$ has a base scheme of length three, a contradiction. This shows that $\Delta^2 \geq 2$.
The case $\Delta^2 \geq 4$ is easily ruled out by the Hodge index theorem, whence $\Delta^2=2$. The Hodge index theorem yields $M.\Delta \geq 4$ and one easily sees that $L^2=18$ yields only the two possibilities $(\Delta^2, M^2, M.\Delta)=(2,6,5)$ or $(2,8,4)$. 
In the first case we can write $Z = (C \cap \Delta) \cup Z_2$
with $\deg Z_2=2$. It follows that $\{M \} \sub |\I_{Z_2} \* M| \sub |M|$ and $3=\dim \{M \}= \dim |M|$, whence $Z_2$ consists of the two base points of $|M|$. It is well-known that such base points are contained in halfpencils, contradicting our choice of $Z$. 
Hence we must be in the second case, where the Hodge index theorem yields $M \eqv 2\Delta$ so that $L \eqv 3\Delta$, contradicting our assumptions.
\end{proof}
We will from now on fix once and for all a $Z \in |A|$ subject to the following four conditions:
\begin{itemize}
\item[(C1)] $Z$ consists of nine distinct points outside of $\Pi$.
\item[(C2)] No six of the nine points in $Z$ form a $g^1_6$ on $C$.
\item[(C3)] If $|P|$ is a complete base-component free pencil on $S$, then for any $P_0 \in |P|$ we have 
$\length(P_0 \cap Z) \leq 1$ and for any singular or reducible $P_0 \in |P|$ we have $P_0 \cap Z =\emptyset$. 
\item[(C4)] If $|B|$ is a complete base-component free net on $S$, then $\length(B_0 \cap Z) \leq 1$ for any 
singular $B_0 \in |B|$, and if furthermore $|B|$ is nonhyperelliptic, then $\length(B_0 \cap Z) \leq 1$ for any smooth hyperelliptic curve $B_0 \in |B|$.
\end{itemize}

Clearly, since $A$ is base-point free and the curves in $\Pi$ are finite, the general $Z \in |A|$ satisfies (C1).
Moreover, since the family of $g^1_6$'s on $C$ has dimension one, we see that the dimension of the family consisting of $Z \in |A|$ such that a length six subscheme forms a  $g^1_6$ is at most $2 < \dim |A|=3$, whence 
the general $Z \in |A|$ satsifies (C2).

To see that the general $Z \in |A|$ satisfies (C3), consider a base-component free pencil $|P|$ on $S$ and let 
$P_0 \in |P|$. Note that $P_0 \cap Z$ consists of distinct points, so that it has only finitely many subschemes. If $Z_2 \sub P_0 \cap Z$ is any subscheme of length two, then $\dim |A-Z_2| = \dim A-2=1$, as $|A|$ is very ample. Consider $\J \sub |P| \x |A|$ given by
\[ \J : = \{ (P_0,Z) \; | \; P_0 \in |P|, \; Z \in |A| \; \mbox{and} \; 
\length(P_0 \cap Z) \geq 2 \} \]
and denote by $\pi_1$ its projection to $|P|$ and $\pi_2$ its projection to $|A|$. Then, by what we saw right above, 
$\dim \pi_1^{-1}(P_0) = \dim |A|-2=1$ for any $P_0 \in |P|$, so that $\dim \pi_2(\J) \leq 1+ \dim |P| = 2=
\dim |A|-1$, so that $\pi_2$ is not surjective and the general $Z \in |A|$ satisfies $\length(P_0 \cap Z) \leq 1$ 
for any $P_0 \in |P|$. Moreover, since the singular and reducible members of $|P|$ are a finite number, the general 
$Z \in |A|$ also satisfies $P_0 \cap Z =\emptyset$ for every singular or reducible $P_0 \in |P|$. Therefore, as the possible $P$'s are countably many, the general $Z \in |A|$ satisfies (C3).
Similarly, since the family of singular curves in a complete base-component free net on $S$, and the family of smooth 
hyperelliptic curves in a complete base-component free nonhyperelliptic net on $S$, both have dimension one, we can argue as 
above, substituting $|P|$ with any irreducible family of dimension one of hyperelliptic smooth curves
or singular curves in the net $|B|$, and prove that the general $Z \in |A|$ satisfies (C4).

This shows that we can indeed choose a $Z \in |A|$ satisfying (C1)-(C4).

From Lemma \ref{lemmaA} and property (C1) we get that we can write (\ref{eq:c4'}) as
\begin{equation*}
|\I_Z(L + K_S)| = |\I_Z \* M| + \Delta, \hs \Delta \cap C =\emptyset.
\end{equation*}
Moreover, using that $\Delta^2 \leq -2$ by Lemma \ref{lemmaA}, we have
$M^2= L^2 + \Delta^2 = 18 + \Delta^2 \leq 18$.
Since $M_{|C}-A \sim (L+K_S-\Delta)_{|C}-A \sim \omega_C -A \sim A$ and
$h^0(\I_Z \* M)=h^0(\I_Z(L+K_S))=h^0(A)=4$
from (\ref{eq:c1}), we get that the natural restriction map arising from
\begin{equation*}
0 \hpil K_S-\Delta \hpil \I_Z \* M \hpil A \hpil  0 
\end{equation*}
is an isomorphism:
$\xymatrix{ H^0 (\I_Z \* M) \ar[r]^{\ \ \cong} & H^0(A) }$.
Moreover, from (\ref{eq:c2}), we have
\begin{equation}
\label{eq:c14}
\Bs |\I_Z \* M| \cap C = Z.
\end{equation}
Set $\overline{Z} =\Bs |\I_Z \* M| \sup Z$ and denote by $f: \tilde{S} \hpil S$
the resolution of $\overline{Z}$. Let $\tilde{H}$ be the strict transform of the general element in $|\I_Z \* M|$ and let $\tilde{C}$ be the strict transform of $C$. Then $|\tilde{H}|$ is base-point free with $h^0(\tilde{H})=4$ and since $\deg \overline{Z} \geq 9$ we must have
\begin{equation}
\label{eq:c20}
\tilde{H}^2 = 9 -k, \hs 0 \leq k \leq 9.
\end{equation}

\begin{lemma} 
\label{lemmaB}
If the general curve in $|\I_Z \* M|$ is singular, then $k \geq 4$.
\end{lemma}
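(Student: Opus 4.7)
The plan is to exploit the standard birational formula
\[ \tilde H^2 = M^2 - \sum m_i^2, \]
where $m_i$ is the multiplicity of the strict transform of the general $H \in |\I_Z \* M|$ at the $i$th blow-up center in the resolution $f\colon \tilde S \to S$. A singularity of the general $H$ forces an extra contribution to $\sum m_i^2$, and combined with an a priori upper bound on $M^2$ this yields $k \geq 4$.

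The bound $M^2 \leq 16$ is immediate: since $\Delta > 0$, $\Delta \cdot L = 0$ and $L$ is big and nef, the Hodge index theorem forces $\Delta^2 \leq 0$, and equality would force $\Delta \sim K_S$, which is impossible since $h^0(K_S) = 0$. Hence $\Delta^2 \leq -2$ by the evenness of the Enriques intersection lattice, so that $M^2 = (L + K_S - \Delta)^2 = 18 + \Delta^2 \leq 16$.

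Next, since $Z \subset \overline Z$ consists of $9$ distinct points (by condition (C1)), the resolution blows up each of them at least once, contributing at least $9$ to $\sum m_i^2$. Assume the general $H$ is irreducible and has a singular point $p$; by Bertini applied on $S \setminus \overline Z$ one has $p \in \overline Z$, and the multiplicity $m$ of the generic member at $p$ satisfies $m \geq 2$, so the blow-up at $p$ contributes $m^2 \geq 4$ to $\sum m_i^2$. If $p \in Z$, the contribution at $p$ is upgraded from $1$ to at least $4$, giving $\sum m_i^2 \geq 8 + 4 = 12$ and $\tilde H^2 \leq 16 - 12 = 4$, hence $k \geq 5$. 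If $p \notin Z$, the contribution at $p$ is additional to the $9$ coming from $Z$, giving $\sum m_i^2 \geq 13$ and $\tilde H^2 \leq 3$, hence $k \geq 6$. In both cases $k \geq 4$.

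The remaining possibility is that the general $H$ could be reducible, i.e., $|\tilde H|$ composite with a pencil; since $|\tilde H|$ is base-point free with $h^0(\tilde H) = 4$, this would force $\tilde H \sim 3P$ for a genus-one pencil $|P|$, whence $\tilde H^2 = 0$ and $k = 9$, and the conclusion $k \geq 4$ holds trivially. The main technical input to check carefully is the Bertini plus semicontinuity step showing that a generic singularity at $p$ really forces the base-scheme multiplicity at $p$ to be at least $2$; once granted, the rest is a clean combinatorial count.
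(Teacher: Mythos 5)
Your overall strategy is the same as the paper's: by Bertini a singular point $p$ of the general member is a base point, it contributes at least $m_p^2\geq 4$ to the drop $M^2-\tilde H^2=\sum m_i^2$, and one concludes by bookkeeping. However, there is a genuine error in the bookkeeping. You assert $\Delta>0$ and deduce $M^2\leq 16$; but $\Delta$ is merely the fixed part of $|\I_Z(L+K_S)|$ and may well be zero (indeed $\Delta=0$ occurs in the cases analysed immediately afterwards in the paper), so the only available bound is $M^2=18+\Delta^2\leq 18$. With $M^2\leq 18$ your case ``$p\in Z$'' gives only $\sum m_i^2\geq 8\cdot 1+4=12$, hence $\tilde H^2\leq 6$ and $k\geq 3$, which does not prove the lemma.

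The correct way to close this is to rule out $p\in Z$ altogether, which you do not do. If the general member had multiplicity $\geq 2$ at a point $x\in Z$, then by upper semicontinuity every member would, and since $C$ is smooth at $x$ and is not a component of any member of $|\I_Z\*M|$, every member would meet $C$ with multiplicity $\geq 2$ at $x$; thus $\Bs|\I_Z\*M|\cap C$ would be non-reduced at $x$, contradicting \eqref{eq:c14} together with condition (C1), which say that this intersection is the reduced scheme of nine distinct points $Z$. Once $p\notin Z$ is known, your second case applies with the honest bound: $\sum m_i^2\geq 9+4=13$, so $\tilde H^2\leq 18-13=5$ and $k\geq 4$. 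Your treatment of the composite-with-a-pencil case is fine in substance (a base-point-free composite system forces $\tilde H^2=0$, so $k=9$), although the pencil need not have genus one; that point is harmless.
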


\begin{proof}
If the general curve in $|\I_Z \* M|$ has a point $x$ of multiplicity $\geq 2$ then, by Bertini's theorem, $x$ is a base point of $|\I_Z \* M|$. Now one easily sees that resolving the base scheme located at $x$
makes the self-intersection drop at least by $4$.
\end{proof}

Now let $\varphi = \varphi_{\tilde{H}}$ be the morphism to $\PP^3$ defined by ${\tilde{H}}$ and denote by $S_0$ the image of $\tilde{S}$ and $C_0$ the image of $\tilde{C}$. Then by construction
$C_0$ is the Clifford embedding of $C$, that is letting $\varphi_A$ be the morphism defined by $A$ we have a commutative diagram
\[ \xymatrix{ \tilde{C} \ar_{\varphi_{|\tilde{C}}}[rd] \ar^{f_{|\tilde{C}}}[r]  &  C \ar^{\varphi_A}[d] \\ & C_0. } \]
 
In particular $C_0$ is smooth and nondegenerate in $\PP^3$ of degree $9$ and is the complete intersection of two cubics. 

If $\dim S_0=1$ then $\varphi$ is composed with a rational pencil (since $h^1(\O_{\tilde{S}})=h^1(\O_S)=0$), so that $\varphi$ factorizes as $\tilde{S} \khpil \PP^1 \khpil \PP^3$ and $C_0$ is the twisted cubic in $\PP^3$, a contradiction.

Hence $S_0$ is a surface and, since $\deg C_0=9$, we have $\tilde{C}. {\tilde{H}}=9$.

Moreover, by (\ref{eq:c14}) we have 
$\tilde{C}^2=\tilde{C}.K_{\tilde{S}}=9$.
Set $d = \deg \varphi$ and $d_0 = \deg S_0$.
Then
\begin{equation}
\label{eq:c25}
d d_0 = 9-k, \hs d_0 \geq 3, \hs 0 \leq k \leq 6, 
\end{equation}
where we have used (\ref{eq:c20}) and the fact that $C_0$ is neither contained in any hyperplane nor in any quadric to conclude that $d_0 \geq 3$.

Now $S_0$ is Cartier in $\PP^3$ whence it is Cohen-Macaulay  and by adjunction
\begin{equation}
\label{eq:c26}
\omega_{S_0} \iso \omega_{\PP^3} (S_0) \* \O_{S_0} \iso \O_{S_0}(d_0-4).
\end{equation}
Let $ \Theta = |\I_{C_0 / \PP^3} (3)|$, which has dimension one. Since $C_0$ is the complete intersection of two cubics, $\Theta$ is base-point free off $C_0$, and, since $C_0$ is smooth and Cartier on any member of $\Theta$, the general member of  $\Theta$ is smooth by Bertini's theorem.

Now we choose once and for all a smooth irreducible surface $T \in \Theta$ such that
\begin{equation*}
T \cap S_0 = C_0 \cup C_1,
\end{equation*}
where $C_1$, if not empty, is an irreducible curve such that
\begin{eqnarray}
\label{eq:c33a} 
C_0 \ \mbox{and} \ C_1 \ \mbox{intersect transversally and outside of} \ \Sing S_0  \ \mbox{and the } \\
\nonumber \mbox{finitely many points on} \
S_0 \ \mbox{coming from the curves contracted by} \ \varphi, 
\end{eqnarray}
\begin{equation*}
C_1 \ \mbox{does not meet the isolated singularities of} \ S_0,
 \mbox{and} \ \Sing C_1 \sub \Sing S_0.
\end{equation*}
Also note that since $3d_0=\O_{S_0}(1).\O_{S_0}(3) = \deg C_0 + \deg C_1$ we have
\begin{equation}
\label{eq:c33d}
\deg C_1= 3(d_0-3).
\end{equation}

Now the surface $T \subset \PP^3$ satisfies $K_T = \O_T(-1)$ and $K_T^2=3$. On $T$ we have $T \cap S_0 \in |\O_T(d_0)|$ whence $C_0 + C_1 \in |O_T(d_0)|$. Furthermore, since $C_0 \in |\O_T(3)|$ (being the complete intersection of two 
cubics) we have 
\begin{equation*}
C_1 \sim \O_T(d_0-3) \sim -(d_0-3)K_T, 
\end{equation*}
whence on $T$ we have $C_0.C_1=3(d_0-3)K_T^2= 9(d_0-3)$. Since $C_0$ and $C_1$ intersect transversally and outside 
$\Sing S_0$ we have 
\begin{equation}
\label{eq:c33f}
\# (C_0 \cap C_1) = 9(d_0-3).
\end{equation}

We now note that if $d \geq 2$, then from (\ref{eq:c25}) the only possibilities are $(d, d_0, k)=(3,3,0)$, $(2,3,3)$ or $(2,4,1)$. 
We will now divide the rest of the proof into the three cases $d =1$, $(d, d_0, k)=(2,4,1)$ and $(d, d_0, k)=(3,3,0)$ or $(2,3,3)$. 

\subsubsection{The case $d =1$.}
We have
\begin{equation*}
3\tilde{H} \sim \varphi^* (C_0 + C_1) \sim \tilde{C} + \tilde{C_1} + \sum r_iR_i,
\end{equation*}
where $\tilde{C_1}$ is the strict transform of $C_1$ and the $R_i$ are the exceptional divisors of
$\varphi$. Define $\tilde{C_2} = \tilde{C_1} + \sum r_iR_i$, then $3\tilde{H} \sim \tilde{C} + 
\tilde{C_2}$ and $27=3\tilde{H}.\tilde{C}= \tilde{C}^2 + \tilde{C}.\tilde{C_2}$ implies
$18= \tilde{C}.\tilde{C_2}= \sum r_iR_i.\tilde{C}+ \tilde{C_1}.\tilde{C}$, whence $\tilde{C_1}.\tilde{C} \leq 18$, and it follows from (\ref{eq:c33a}) that
$\# (C_0 \cap C_1) \leq 18$.

Comparing with (\ref{eq:c33f}) we see that 
\begin{equation}
\label{eq:c35b}
\tilde{H}^2=d_0 =9-k \leq 5, \; \mbox{whence} \; k \geq 4.
\end{equation}
Now consider the Stein factorization of $\varphi$:
\[ \xymatrix{ \tilde{S} \ar^{\pi_1}[r] & \overline{S_0} \ar^{\pi_2}[r]  &  S_0. } \]
Then $\overline{S_0}$ is normal and, as $d = \deg \varphi = 1$, $\pi_2$ is an isomorphism, so that $S_0$ is normal and we can assume that $\pi_1 = \varphi$. Using (\ref{eq:c26}) we get
\begin{equation}
\label{eq:c29}
K_{\tilde{S}} \eqv \varphi^*(K_{S_0}) + \sum c_iR_i \eqv (d_0-4)\tilde{H} +\sum c_iR_i,
\end{equation}
for some $c_i \in \QQ$. Also note that since $\tilde{H}$ is nef and 
\begin{equation*}
K_{\tilde{S}} \sim f^*K_S + \sum a_i \mathfrak{e}_i, a_i \geq 0,
\end{equation*}
where the $\mathfrak{e}_i$ are the exceptional divisors of $f$, we have $\tilde{H}.K_{\tilde{S}} \geq 0$.

Since $h^2(\tilde{H})=h^0(K_{\tilde{S}}-\tilde{H})=0$, we have
$h^1(\tilde{H}) = h^0(\tilde{H})-\frac{1}{2}\tilde{H}.(\tilde{H}-K_{\tilde{S}})-1 \geq 1$.

Let $H \in |\tilde{H}|$ be a general smooth curve. From the short exact sequence
\[ 0 \hpil \O_{\tilde{S}} \hpil \O_{\tilde{S}}(\tilde{H}) \hpil \O_{H}(\tilde{H})  \hpil 0, \]
we see that $h^0(\O_{H}(\tilde{H})) =3$ and $h^1(\O_{H}(\tilde{H})) =h^1(\tilde{H}) \geq 1$. As
$|\O_{H}(\tilde{H})|$ is birational, it cannot be a multiple of a $g^1_2$ on $H$. If $\O_{H}(\tilde{H}) \cong \omega_H$ 
we get $\tilde{H}.K_{\tilde{S}}=0$ whence $\tilde{H}.\mathfrak{e}=0$ for every exceptional divisor $\mathfrak{e}$, so that $\tilde{H} = f^{\ast}(M_0)$ for some $M_0$, a contradiction. Now Clifford's theorem gives $0 < \Cliff \O_{H}(\tilde{H}) = \tilde{H}^2 - 4 = d_0-4$. 
Combining with \eqref{eq:c35b} we get
\begin{equation}
\label{eq:d2}
\tilde{H}^2=d_0=5 \; \mbox{and} \; k=4.  
\end{equation}
It follows from the proof of Lemma \ref{lemmaB} that the general curve in $|\I_Z \* M|$ has at most one singular 
point, and if so, it is of multiplicity two. 

Using Lemma \ref{lemmaA} and \eqref{eq:d2} we get $5 = \tilde{H}^2 \leq (L-\Delta)^2 -9 = 9+\Delta^2$,
whence $\Delta^2 \geq -4$. We also have
\[ M^2=18+\Delta^2 = 2p_a(M)-2 \leq 2p_a(\tilde{H})-2+2 = 
\tilde{H}.(\tilde{H}+K_{\tilde{S}})+2=7+\tilde{H}.K_{\tilde{S}}, \]
which yields
$\tilde{H}.K_{\tilde{S}} \geq 11 +\Delta^2 \geq 7$. Combined with (\ref{eq:c29}) we get the contradiction
\[ 7 \leq \tilde{H}.K_{\tilde{S}}= \tilde{H}. \Big(\tilde{H} + \sum c_iR_i \Big) = 5.  \]

\subsubsection{The case $(d, d_0, k)=(2,4,1)$.}

First of all note that we have $\tilde{H}^2=8$, whence, using Lemma \ref{lemmaA}, we get 
$8 = \tilde{H}^2 \leq (L - \Delta)^2 - 9 = 9 + \Delta^2$ so that $\Delta^2 \geq -1$, whence $\Delta=0$ and $M \sim L+K_S$. Moreover the general curve in $|\I_Z \* M|$ is smooth by Lemma \ref{lemmaB}.

We now show that $S_0$ is normal.

Assume, to get a contradiction, that $S_0$ is not normal. Since it is Cohen-Macaulay, it is singular in codimension one, so that the general smooth curve in $|\tilde{H}|$ is mapped $2:1$ to a hyperplane section of $S_0$, which is a singular curve of arithmetic genus $3$. This map factors through the normalisation of the hyperplane section, whence the general smooth curve in $|\tilde{H}|$ can be mapped $2:1$ to a smooth curve of genus $\leq 2$. It follows that the general smooth curve in $|\tilde{H}|$ has gonality $\leq 4$. Since the elements in an open dense subset of the smooth curves in $|\tilde{H}|$ are in one-to-one correspondence with the smooth curves in $|\I_Z \* M|$, it follows by \cite[Thm.1.4]{glm1} that the gonality is $4$ for the  general smooth curve in $|\I_Z \* M| \sub |M|$.
By Lemma \ref{lemma:18,1,4} right below, we have $M \sim L+K_S \eqv 3D$ with $D^2=2$, contradicting our assumptions.
Therefore we have shown that $S_0$ is normal.

We now prove the needed lemma, which will be useful later as well:

\begin{lemma} 
\label{lemma:18,1,4}
Let $N$ be a nef line bundle with $N^2=18$ and $\phi(N) \geq 3$ on an Enriques surface $S$. Assume $Y \in |N|$ is a smooth curve of gonality $4$. Then $N \eqv 3D$ with $D^2=2$.
\end{lemma}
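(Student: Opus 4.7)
The plan is to analyse the rank-$2$ Lazarsfeld--Mukai bundle attached to a minimal pencil of degree $4$ on $Y$ and extract the required divisibility of $N$ via Lemma \ref{useful3} combined with a Hodge-index argument.

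Let $|A|$ be a base-point-free $g^1_4$ computing the gonality of $Y$ and let $\E = \E(Y,A)$ be as in \eqref{eq:eca}. Then $\rk \E = 2$, $\det \E = N$ and $c_2(\E) = \deg A = 4$; since $\deg(\N_{Y/S} - A) = 14$, Riemann--Roch on $Y$ gives $h^0(\N_{Y/S} - A) \geq 5$, so $\E$ is globally generated off a finite subset of $Y$. The bound $\phi(N)^2 \leq N^2 = 18$ restricts $\phi(N) \in \{3,4\}$. Fix a nef isotropic $F > 0$ with $F^2=0$ and $F \cdot N = \phi(N)$. A further Riemann--Roch computation on $Y$ shows $h^0(\O_Y(N - F - A)) \geq 14 - \phi(N) - 9 \geq 1$, so twisting \eqref{eq:eca} by $\O_S(-F)$ yields a nonzero section $s \in H^0(\E(-F))$.

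Applying Lemma \ref{useful3} to $s$ (with $r = h^0(A) - 1 = 1$), the divisorial zero scheme $D$ of $s$ satisfies $D \geq F > 0$, and there is an exact sequence
\[
0 \hpil \O_S(D) \hpil \E \hpil \I_W \* \O_S(M) \hpil 0,
\]
with $M = N - D$ nef and base-component free, $M_{|Y} \geq A$, and $D \cdot M + \length(W) = c_2(\E) = 4$. From $D \cdot M \leq 4$ and $D \cdot N \geq \phi(N) \geq 3$ we get $D^2 \geq -1$, and evenness of the Enriques intersection form forces $D^2 \geq 0$. The Hodge index theorem applied to $D$ and $N$ gives $18 D^2 \leq (D \cdot N)^2 \leq (D^2 + 4)^2$, while $M^2 \geq 0$ and $M \cdot N = \deg M_{|Y} \geq \deg A = 4$ impose further restrictions. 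Putting these together cuts the possibilities to $D^2 \in \{0, 2, 8, 10\}$. The case $D^2 = 10$ is excluded as follows: here $M^2 = 0$ and $M \cdot N = 4 = \deg A$ force $M_{|Y} \cong A$; but then $M$ must be (numerically) a half-pencil, and standard vanishing gives $h^0(\O_Y(M)) = h^0(M) = 1 < 2 = h^0(A)$, a contradiction.

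In the case $D^2 = 2$, equality in Hodge forces $D \cdot N = 6$ and $N \eqv 3 D$ in $\Num S$, which is the desired conclusion. In the case $D^2 = 8$, Hodge equality yields $D \cdot N = 12$, hence $2 N \eqv 3 D$ in the torsion-free lattice $\Num S$; since $\gcd(2, 3) = 1$, this forces $D \eqv 2 E$ and $N \eqv 3 E$ with $E^2 = D^2 / 4 = 2$, again the desired conclusion.

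The main obstacle is the remaining case $D^2 = 0$, in which $D$ is effective isotropic with $D \cdot N \in \{3, 4\}$ and $M^2 \in \{12, 10\}$. The plan is to combine the inclusion $M_{|Y} \geq A$ with the classification of line bundles of small square on an Enriques surface (as in \cite[Lemma 2.4]{kl1}) to analyse the possible decompositions of $M$ and the restriction $|\O_Y(M)|$: one expects that in each sub-case either $\phi(M) \leq 1$, which via $N \sim D + M$ with $D$ isotropic propagates to $\phi(N) \leq 2$ (contradicting the hypothesis), or $Y$ inherits a pencil of degree at most $3$ (contradicting $\gon Y = 4$). When $\phi(N) = 3$, one may also exploit the positive-dimensional family $H^0(\E(-F))$ to vary the section $s$ and force $D^2 \geq 2$ in the destabilising analysis.
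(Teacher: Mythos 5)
Your overall strategy (the rank-two Lazarsfeld--Mukai bundle of the $g^1_4$, a destabilizing sub-line-bundle, then the Hodge index theorem) is the right one, and your cases $D^2=2$ and $D^2=8$ are handled correctly: in both you get equality in Hodge and hence $N\eqv 3D_0$ with $D_0^2=2$. But the proof is incomplete where you yourself flag it. The paper's actual proof is a three-line application of \cite[Prop.3.1]{kl1}, which produces a \emph{base-component free} linear system $|D|$ with $2D^2\leq N.D\leq D^2+4$; base-component freeness gives $h^0(D)\geq 2$, hence $N.D\geq 2\phi(N)\geq 6$, which together with $N.D\leq D^2+4$ kills $D^2\leq 0$ immediately, and the Hodge index theorem then forces $D^2=2$ and $N\eqv 3D$. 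By instead choosing a section of $\E$ vanishing along a specific isotropic divisor $F$ with $F.N=\phi(N)$, you obtain a divisorial part $D\geq F$ that has no reason to move (it may well be $F$ itself, a half-fiber with $h^0=1$), so the crucial inequality $N.D\geq 2\phi(N)$ is unavailable and the case $D^2=0$ (with $D.N=D.M\in\{3,4\}$ and $M^2\in\{12,10\}$) survives all your numerical constraints. Your treatment of that case is explicitly only a ``plan'' (``one expects that\dots''), so as written the argument does not prove the lemma.

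Two smaller points. First, in your exclusion of $D^2=10$ you assert that $M$ ``must be (numerically) a half-pencil''; that cannot happen, since Lemma \ref{useful3} guarantees $M$ is base-component free and nontrivial, so $M^2=0$ forces $M\sim mP$ for a genus-one pencil $P$, whence $M.N\geq 2m\phi(N)\geq 6>4=M.D=M.N$ --- the case dies for that reason, not yours. Second, the cleanest repair of your approach is not to vary the section by hand but to invoke the Bogomolov-instability statement packaged in \cite[Prop.3.1]{kl1} (the discriminant is $c_1(\E)^2-4c_2(\E)=18-16=2>0$), which is exactly what upgrades your $D$ to a moving class and reduces the whole lemma to the two lines of arithmetic you already carried out in the case $D^2=2$.
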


\begin{proof}
By \cite[Prop.3.1]{kl1} there is a base-component free linear system $|D|$ such that $2D^2 \leq N.D \leq D^2+4 \leq 8$. Since $\phi(N)\geq 3$, we must have $N.D \geq 2\phi(N) \geq 6$, whence $D^2 \geq 2$. The Hodge index theorem yields $D^2=2$ and $N \eqv 3D$.
\end{proof}

Let $\sigma: \tilde{S_0} \khpil S_0$ be a minimal desingularisation. 
After taking a succession of monoidal transformations $f': \tilde{\tilde{S}} \khpil \tilde{S}$ we get a commutative diagram  
\[ 
\xymatrix{\tilde{\tilde{S}}  \ar^{f'}[r] \ar_{\Phi}[d]  &  \tilde{S} \ar^{\varphi}[d]  \ar^{f}[r] & S \\  
\tilde{S_0} \ar^{\sigma}[r]  & S_0} 
\]
with $\Phi$ a degree two morphism of smooth surfaces.

We now claim that $h^1(\O_{\tilde{S_0}})=0$.
Indeed consider the Stein factorization of $\Phi$:
\[ 
\xymatrix{
  \tilde{\tilde{S}} \ar^{\Phi_1}[r] &  {\tilde{S}_0}' \ar^{\Phi_2}[r] & \tilde{S_0}.  
} \]
Then $({\Phi_1})_*\O_{\tilde{\tilde{S}}} \iso \O_{{\tilde{S}_0}'}$, so that ${\tilde{S}_0}'$ is normal (see for example \cite[Prop.1.2.16]{mat}). By the lemma of Enriques-Severi-Zariski \cite[III, Thm. 7.8]{hart} and the criterion of \cite[III, Thm. 7.6(b)]{hart}, we have that ${\tilde{S}_0}'$ is Cohen-Macaulay. Now the double cover $\Phi_2$ satisfies $({\Phi_2})_*\O_{{\tilde{S}_0}'} \iso \O_{{\tilde{S}_0}} \+ \L$, for some $\L \in \Pic {\tilde{S}_0}$ \cite[Chp. 0, section 1]{cd}. Hence ${\Phi}_*\O_{\tilde{\tilde{S}}} \iso \O_{{\tilde{S}_0}} \+ \L$.
By Leray we get $h^1({\Phi}_*\O_{\tilde{\tilde{S}}}) \leq h^1(\O_{\tilde{\tilde{S}}})=h^1(\O_S)=0$. Hence $h^1(\O_{\tilde{S_0}})=0$, as claimed.  

As an immediate consequence, we get from combining \cite[Prop.3.7]{lau}, \cite[Prop.8 and Thm.1 on p.~345]{ume} and the fact that $\omega_{S_0} \iso \O_{S_0}$ by \eqref{eq:c26}, that $\tilde{S_0}$ must be a $K3$ surface.

Now $\Phi$ is ramified along an effective divisor $R$ (possibly zero) and we have
$K_{\tilde{\tilde{S}}} \sim \Phi ^* K_{\tilde{S_0}} +R \sim R$.
Therefore $h^0(K_{\tilde{\tilde{S}}})=h^0(R) >0$, which is impossible, since $h^0(K_{\tilde{\tilde{S}}})= h^2(\O_{\tilde{\tilde{S}}})=h^2(\O_S)=0$. 
Hence we have a contradiction, which rules out this case. 

\subsubsection{The cases $(d, d_0, k)=(3,3,0)$ or $(2,3,3)$}

By Lemma \ref{lemmaA} and \eqref{eq:d2} we have $\tilde{H}^2 \leq (L-\Delta)^2 - 9 = 9 + \Delta^2$, whence the three options
\begin{itemize}
\item[(a)] $\tilde{H}^2=9$, $\Delta=0$, $d=3$.
\item[(b)] $\tilde{H}^2=6$, $\Delta=0$, $d=2$.
\item[(c)] $\tilde{H}^2=6$, $M^2=16$, $\Delta^2=-2$, $d=2$.
\end{itemize}
Moreover, in this case $C_1=0$ by \eqref{eq:c33d}, whence $C_0 \in |\O_{S_0}(3)|$ is Cartier and $S_0$ has only isolated singularities, in particular it is normal. From (\ref{eq:c26}) we find that $\omega _{S_0} \iso \O_{S_0}(-1)$. Also note that, in all cases, the general curve in $|\I_Z \* M|$ is smooth by Lemma \ref{lemmaB}.

By \cite[Prop.0.3.3]{cd} $S_0$ is either an anticanonical del Pezzo surface or a projection of a  scroll or an elliptic cone. In the two latter cases the pullback by $\varphi$ of the ruling is a moving complete linear system $|R|$ on $\tilde{S}$ such that $R.\tilde{H}=d$, that is all the smooth curves in $|\tilde{H}|$ have gonalities $\leq d \leq 3$. As the general element in $|\I_Z \* M|$ is smooth, the curves in an open, dense subset of the smooth curves in $|\I_Z \* M|$ are in one-to-one correspondence with the smooth curves in $|\tilde{H}|$ and they all have gonalities $\leq d \leq 3$. Since $g(M)=9$ or $10$ it follows from \cite[Thm.1.4]{glm1} that they must all have gonality two, that is they are hyperelliptic. Therefore $\phi(M)=1$, so that $|M|$ has base points and we must be in case (c), since $L$ is base-point free. One easily sees that one can write $M \sim 8E_1+E_2$ with both $E_i >0$ and $E_i^2 =0$, such that $E_1.E_2=1$. Since $\phi(L)  \geq 3$ we must have $E_1. \Delta \geq 2$ and since $\Delta.M =2$ it follows that $\Delta.E_2\leq -14$, whence the contradiction $E_2.L =E_2.M +E_2.\Delta \leq -6$.

Note that we have also just proved that $\phi(M) \geq 2$, which will be useful later.

Hence $S_0$ is an anticanonical del Pezzo surface. 

We now rule out the cases (b) and (c). 

In these two cases we have $d=2$, so that the general smooth curve  in $|\tilde{H}|$ is mapped generically $2:1$ to a plane cubic. Therefore the general smooth curve in $|\tilde{H}|$ possesses a one dimensional family of complete $g^1_4$'s. 
In case (b), the smooth curves in an open, dense subset of the smooth curves in $|\I_Z \* M|$ are in one-to-one correspondence with the smooth curves in $|\tilde{H}|$, and by Lemma \ref{lemma:18,1,4} we have that $L$ is $3$-divisible, a contradiction.
As for case (c), we first need the following result:

\begin{lemma} 
\label{lemma:16,1,4}
Let $N$ be a nef line bundle with $N^2=16$ and $\phi(N) \geq 2$ on an Enriques surface $S$. Assume $Y \in |N|$ is a smooth curve of gonality $4$. Then any $g^1_4$ on $Y$ is of one of the following types:
\begin{itemize}
\item[(i)]  $\O_Y(P)$ for an elliptic pencil $P$ with $P.N=4$ (in particular $\phi(N)=2$); 
\item[(ii)] $\O_Y(D)-x-y$, for a base-component free linear system $|D|$ with $D^2=2$ and
             two distinct base points $x$ and $y$;
\item[(iii)] $\O_Y(D)-Z_4$, for a base-component free linear system $|D|$ with $D^2=4$ and $N \eqv 2D$ and $Z_4$ a $0$-dimensional scheme of length $4$ imposing only one condition on $|D|$.
\end{itemize}
\end{lemma}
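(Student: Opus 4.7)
The strategy is to imitate the proof of Lemma \ref{lemma:18,1,4}: given a $g^1_4$, say $|A_0|$, on $Y$ (which is automatically base-point free since $Y$ has gonality $4$), apply \cite[Prop.3.1]{kl1}. The weaker hypothesis $\phi(N)\geq 2$ (versus $\phi(N)\geq 3$) now leaves several numerical possibilities for the base-component free divisor $D$ supplied by that proposition, precisely the three cases (i), (ii), (iii). Case (b) of \cite[Prop.3.1]{kl1} is ruled out by arguments analogous to those in Section \ref{sec:r=3}, the incompatibility following from $N^2=16$ and $\phi(N)\geq 2$. Case (a) yields a base-component free $D$ on $S$ with $|\O_Y(D)|\sup |A_0|$ and
\[
  2D^2 \leq N.D \leq D^2+4 \leq 8,
\]
and since $N.D\geq 2\phi(N)\geq 4$ (and, in the event $D^2=0$, $D\sim mP$ gives $N.D\geq 4m$), we obtain $D^2 \in \{0,2,4\}$.

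The cases $D^2=0$ and $D^2=4$ are routine. For $D^2=0$ one has $D\sim mP$ with $|P|$ an elliptic pencil; the bounds force $m=1$ and $P.N=4$, hence $\phi(N)=2$, and $A_0\iso \O_Y(P)$ by equality of degrees, giving case (i). For $D^2=4$, the Hodge index theorem applied to $N^2=16$ yields $N.D=8$ and $N\eqv 2D$; writing $\O_Y(D)\iso A_0+Z_4$ for an effective $Z_4$ of length $4$ on $Y$, the identity $h^0(A_0)=2$ translates (via the restriction map $H^0(\O_S(D))\khpil H^0(\O_Y(D))$) into $Z_4$ imposing exactly one condition on $|D|$, giving case (iii).

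The remaining case $D^2=2$ is the main obstacle. Here the Hodge index theorem combined with the bounds above forces $N.D=6$, so $\O_Y(D)$ is a degree-$6$ line bundle on $Y$ and one may write $\O_Y(D)\iso A_0+Z_2$ for an effective $Z_2$ of length $2$. To land in case (ii), one must show that $|D|$ has exactly two distinct base points $x,y$ on $S$ lying on $Y$, and that $Z_2=x+y$. The crucial step is to exclude the possibility that $|D|$ is base-point free: in that event, from $(D-N).N=-10$ and the nefness of $N$ one deduces $h^1(D-N)=0$, so that $|\O_Y(D)|$ is a complete base-point-free $g^1_6$ on $Y$; but then the subpencil $|A_0|\sub |\O_Y(D)|$ obtained by removing the fixed divisor $Z_2$ would force $Z_2\sub \Bs|\O_Y(D)|=\emptyset$, a contradiction. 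Thus $|D|$ has nonempty base scheme, and these base points must lie on $Y$ (otherwise the same contradiction recurs on the complement). A standard analysis of base-component-free divisors with $D^2=2$ on an Enriques surface shows that $\Bs|D|$ has length exactly $2$; the explicit description of $A_0$ as a base-point-free $g^1_4$ then forces $Z_2$ to be reduced, so $\Bs|D|=\{x,y\}$ consists of two distinct points and $Z_2=x+y$, completing case (ii).
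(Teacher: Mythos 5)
Your overall route is the same as the paper's: apply \cite[Prop.3.1]{kl1} to a base-point free $g^1_4$ to produce a base-component free $|D|$ with $2D^2 \leq N.D \leq D^2+4 \leq 8$ and $\O_Y(D) \geq A_0$, use $N.D \geq 2\phi(N) \geq 4$ to reduce to $D^2 \in \{0,2,4\}$, and match the three values with cases (i)--(iii) via the Hodge index theorem. Your treatment of $D^2=0$ and $D^2=4$ agrees with the paper's (in the latter case the needed surjectivity of $H^0(D) \to H^0(\O_Y(D))$ is harmless because $N-D \eqv D$ is nef with positive square, so $h^1(D-N)=0$ is automatic).

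There is, however, a genuine gap in the case $D^2=2$, which is exactly where the paper has to work. You assert that ``from $(D-N).N=-10$ and the nefness of $N$ one deduces $h^1(D-N)=0$.'' This is not a valid deduction on an Enriques surface: for an effective divisor $N_1 = N-D > 0$, negativity of $(-N_1).N$ says nothing about $h^1(-N_1)$, and in general $h^1(-N_1)$ vanishes if and only if $N_1$ is quasi-nef in the sense of \cite[Cor.2.5]{klvan}, i.e.\ there is no $\Delta>0$ with $\Delta^2=-2$ and $\Delta.N_1 \leq -2$. The paper's proof establishes precisely this: assuming such a $\Delta$ exists with $k:=-\Delta.N_1 \geq 2$, it invokes \cite[Lemma2.3]{kl1} to write $N_1 \sim A_1 + k\Delta$ with $A_1^2=6$, uses the Hodge index theorem to get $D.A_1 \geq 4$, deduces $D.\Delta = 0$ from $4 = D.N_1 = D.A_1 + kD.\Delta$, and then reaches the contradiction $0 \leq N.\Delta = D.\Delta + N_1.\Delta = -k < 0$. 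Without this (or some substitute), you cannot conclude $h^0(\O_Y(D)) = h^0(D) = 2$, and the identification of the degree-two fixed divisor $Z_2$ of $|\O_Y(D)|$ with the two base points of $|D|$ collapses: a priori $|\O_Y(D)|$ could be a larger linear series and $A_0$ a subpencil not of the stated form. The remainder of your case $D^2=2$ (that $\Bs|D|$ consists of two distinct points and that $Z_2$ is exactly their trace on $Y$) is fine once the vanishing is in place, so the gap is localized but essential.
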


\begin{proof}
Let $|B|$ be a base-point free complete $g^1_4$ on $Y$. By \cite[Prop.3.1]{kl1} we can write $N \sim D + N_1$ with $N_1 > 0, |D|$ base-component free and such that $2D^2 \leq N.D \leq D^2+4 \leq 8$ and $\O_Y(D) \geq B$. 
If $D^2=0$, then $D.N=4$ and $|D|$ is an elliptic pencil, since $\phi(N) \geq 2$, and we are in case (i).
If $D^2=2$ then $D.N=6$ by the Hodge index theorem, and we claim that $H^1(-N_1) = 0$. As $N_1^2 = 6$ if $H^1(-N_1) \neq 0$ by \cite[Cor.2.5]{klvan} there exists a $\Delta >0$ such that $\Delta^2=-2$ and $k:= - \Delta.N_1 \geq 2$. By \cite[Lemma2.3]{kl1} we can write $N_1 \sim A_1 + k \Delta$ with $A_1 > 0, A_1^2 = 6$. Now $D.A_1 \geq 4$ by the Hodge index theorem and $4 = D.N_1 = D.A_1 + k D.\Delta \geq 4$ gives $D.\Delta = 0$ whence the contradiction $0 \leq N.\Delta = D.\Delta + N_1.\Delta = -k$. Therefore $H^1(-N_1) = 0$ and $h^0(\O_Y(D))=h^0(D)=2$. Hence $B \sim \O_Y(D)-x-y$, where $x$ and $y$ are the two distinct base points of $|D|$. This is case (ii).
Finally, if $D^2=4$, then $N \eqv 2D$ by the Hodge index theorem and for reasons of degree $B \sim \O_Y(D) -Z_4$ for a $0$-dimensional scheme $Z_4$ of length $4$ on $Y$. As $h^1(D)=h^1(D+K_S)=0$ by the nefness of $D$, we must have 
$h^0(\O_Y(D))=h^0(D)=3$ and $h^0(B)=h^0(\O_Y(D) -Z_4)=h^0(\I_{Z_4}(D))=2$, we see that the scheme $Z_4$ poses only one condition on $H^0(D)$. This is case (iii).
\end{proof}

Returning to case (c), the curves in an open, dense subset of the smooth curves in $|\tilde{H}|$ are in one-to-one correspondence with the smooth curves in $|\I_Z \* M| \sub |M|$ and their $g^1_4$'s are therefore given by Lemma \ref{lemma:16,1,4}. 
Since the general smooth curve in $|\I_Z \* M|$ has infinitely many $g^1_4$'s, they cannot all be of type (i), that is cut out by an elliptic pencil, nor can they all be of type (ii), since there are only finitely many linear equivalence classes of divisors $D$.
Hence the general smooth curve $Y \in |\I_Z \* M|$ has infinitely many $g^1_4$'s of type (iii) in Lemma \ref{lemma:16,1,4}, that is $M \eqv 2D$  for a base-component free linear system $|D|$ with $D^2=4$ and these $g^1_4$'s are of type $\O_Y(D)-Z_4$ with $Z_4$ a $0$-dimensional scheme of length $4$ imposing only one condition on $|D|$. Since $D^2=4$, and $h^0(\I_{Z_4}(D))=2$, in general the scheme $Z_4$ is uniquely determined by any of the four points in $Z_4$, and for general $x \in S$ the $Z_4$ corresponding to $x$ is nothing but
\[ Z_4(x) := \Bs |\I_x(D)|. \]
Since the general smooth curve in $|\I_Z \* M|$ has a one dimensional family of $g^1_4$'s of this type, we must have that the general element in $|\I_Z \* M|$ passing through $x$ also passes through the whole of $Z_4(x)$.
But this implies that the map given by $|\tilde{H}|$ is $4:1$, a contradiction. 

We have therefore shown that we are in case (a), that is $\tilde{H}^2=9$, $\Delta=0$ and $d=3$.

We now treat this case. Note that $\overline{Z} = Z$ and $f$ is the blow-up at nine distinct points.

\begin{claim} 
\label{cl:eecc1}
Let $|P|$ be a base-component free pencil with $(f^*P).\tilde{H}=6$ or $8$.
Then $|f^*P|$ is base-component free and its general element is mapped generically one-to-one to $S_0$.
\end{claim}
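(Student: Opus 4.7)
The plan is to prove the two parts of the Claim in turn. For the base-component freeness of $|f^*P|$, I observe that $|P|$ is a pencil with no fixed components and $P^2=0$, hence already base-point free on $S$. Therefore for each of the nine points $z_i\in Z$ some $P'\in|P|$ avoids $z_i$, so no exceptional divisor $\mathfrak{e}_i$ is a fixed component of $|f^*P|$; combined with the absence of horizontal fixed components, this gives the base-component freeness.

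For the 1-to-1 statement, take a general $P'\in|P|$, smooth and irreducible by (C3), with $P'\cap Z=\emptyset$ (which avoids the finitely many members of $|P|$ meeting $Z$). Suppose for contradiction that the generic degree $e$ of $\varphi|_{f^*P'}\colon f^*P'\to C_{P'}:=\varphi(f^*P')\subset S_0$ satisfies $e\geq 2$. Since $\deg\varphi=3$, we have $e\in\{2,3\}$, and $e\cdot\deg C_{P'}=\tilde{H}.f^*P'=P.M\in\{6,8\}$. In the key case $e=3$, necessarily $P.M=6$ and $C_{P'}$ is a plane conic. All three sheets of $\varphi$ over a general point of $C_{P'}$ lie on $f^*P'$, so $|f^*P|$ descends through $\varphi$ to a pencil of plane conics covering $S_0$. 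Each such conic is residual in a hyperplane section to a line of $S_0$; since the cubic del Pezzo $S_0$ is Gorenstein anticanonical and not a cone (ruled out earlier), it contains only finitely many lines, so the residual line $L$ is constant and the descended map is projection from $L$. Pulling back $C_{P'}+L\sim\O_{S_0}(1)$ via $\varphi$ gives $\tilde{H}\sim f^*P+\varphi^*L$, equivalently $\varphi^*L\sim f^*(M-P)-\sum_{i=1}^{9}\mathfrak{e}_i$ is effective, which by Leray forces $h^0(\I_Z\otimes(M-P))\geq 1$. Restricting such a divisor $D\in|M-P|$ to $C$ and using $\O_C(M-P)\sim 2A-\O_C(P)$ together with \cite[Thm.3.7]{elms} to identify $|A-\O_C(P)|$ with the unique $Z_3$ attached to the $g^1_6=|\O_C(P)|$, one aims to extract a six-point subset of $Z$ realising a $g^1_6$ on $C$, contradicting (C2). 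Case $e=2$ is treated analogously, producing image curves of degree $3$ or $4$ on $S_0$ whose pencil structure violates (C2), (C3) or (C4).

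The main obstacle is the final step in each case: converting the effective divisor $\varphi^*L\sim f^*(M-P)-\sum\mathfrak{e}_i$ into an explicit violation of (C1)--(C4). Concretely one must show that under these genericity assumptions $Z$ imposes independent conditions on $|M-P|$, i.e.\ $h^0(\I_Z(M-P))=0$. A careful analysis of the nine lines $\ell_i=\varphi(\mathfrak{e}_i)\subset S_0$, their incidence with the fixed residual line $L$, and how these translate back to linear relations among the $z_i$ on $C$, is expected to provide this; the case $e=2$ requires analogous book-keeping for the lower-degree image curves.
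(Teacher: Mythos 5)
Your proposal has two genuine gaps. First, you assume at the outset that $P^2=0$, but the claim must also cover base-component free pencils with $P^2=2$ (which have two base points on $S$), and the paper does use the claim in exactly that situation: in the proof of Corollary \ref{cor:eecc1} the case $D^2=2$, $(f^*D).\tilde{H}=8$ is ruled out by appealing to this claim. For $P^2=2$ one needs property (C1) (the base points of such a pencil lie on half-pencils, hence in $\Pi$, hence off $Z$) to conclude that $|f^*P|$ still has no fixed component. Second, and more seriously, your argument for generic injectivity is not a proof: you reduce the case $e=3$ to showing that a certain effective class $\varphi^*L\sim f^*(M-P)-\sum\mathfrak{e}_i$ (which, incidentally, should also account for the $\varphi$-contracted part of $\varphi^*C_{P'}$) produces a violation of (C1)--(C4), and you explicitly leave this step open (``one aims to extract \dots'', ``is expected to provide this''). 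The case $e=2$, where the image has degree $3$ or $4$, is only asserted to be ``analogous''. As it stands the contradiction is never reached, so the statement is not proved.

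For comparison, the paper's proof avoids all of this with a short parity/degree argument in the opposite direction: instead of taking a general member of $|P|$ disjoint from $Z$, it takes the unique $P_0\in|P|$ through a chosen point $x\in Z$. By (C3), $P_0$ is smooth, irreducible and meets $Z$ only at $x$, so $f^*P_0=\tilde{P_0}+\mathfrak{e}_x$ with $\tilde{P_0}$ irreducible and $\tilde{H}.\tilde{P_0}=5$ or $7$. Since neither $5$ nor $7$ is divisible by $2$ or $3$ (the only possible degrees of $\varphi$ restricted to a curve, as $\deg\varphi=3$), $\varphi$ is generically one-to-one on $\tilde{P_0}$, and its image has degree $5$ or $7$, hence differs from the line $\varphi(\mathfrak{e}_x)$; thus $\varphi$ is generically one-to-one on this member of $|f^*P|$ and therefore on the general one. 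If you want to salvage your approach you would have to actually carry out the (C2)-type bookkeeping you defer, but the paper's choice of a special member through $Z$ makes the whole analysis unnecessary.
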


\begin{proof}
If $P^2=0$ then $|P|$ is base-point free, whence $|f^*P|$ is base-point free as well. If $P^2=2$ then $|P|$ has two distinct base points which lie outside of $Z$ by property (C1), whence $|f^*P|$ has still only two base points.
Choose any $x \in Z$ and denote by $\mathfrak{e}_x$ the exceptional curve of $f$ over $x$. Choose the unique $P_0 \in |P|$ passing through $x$. Then by property (C3) we have that $P_0$ is smooth and irreducible and $P_0 \cap Z =\{ x \}$. 
It follows that $f^* P \sim f^*P_0 = \tilde{P_0}+\mathfrak{e}_x$,
with $ \tilde{P_0}$ irreducible satisfying $\tilde{H}.\tilde{P_0}=5$ or $7$, which is neither divisible by $2$ nor $3$. Therefore $\varphi$ maps $\tilde{P_0}$ generically one-to-one to a curve of degree five or seven on $S_0$, whence different from the line $\varphi(\mathfrak{e}_x)$. 
We conclude that $\varphi$ is generically one-to-one on $\tilde{P_0}+\mathfrak{e}_x$, whence on the general element of $|f^*P|$. 
\end{proof}

Now recall that, by \cite[Prop.0.3.4]{cd} and \cite[Thm.1]{ko} (or \cite{gr}), any anticanonical del Pezzo cubic surface in $\PP^3$ is $\mathbb Q$-factorial and it contains at least one line and one pencil of conics (which is complete). For such a pencil $|\mathfrak{D}|$ we define its {\it strict transform} to be the moving part of the pencil $\{ \varphi^*D_0 \}_{D_0 \in |\mathfrak{D}|}$. Note that it is complete. 

\begin{cor} 
\label{cor:eecc1}
Let $|\tilde{D}|$ be the strict transform of a pencil of conics on $S_0$. Then
\[ \tilde{D} \sim f^*D- \sum \alpha_i \mathfrak{e}_i, \; \alpha_i \geq 0 \]
for some nef $D \in \Pic S$ with $D^2 \geq 4$ and $\sum \alpha_i \geq 3$.
\end{cor}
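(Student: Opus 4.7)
The plan is to translate the pencil of conics on $S_0$ back to $S$ via $\varphi$ and $f$. Since $f$ is the blow-up of $S$ at the nine distinct points of $Z$, we may uniquely write $\tilde{D} \sim f^*D - \sum_i \alpha_i \mathfrak{e}_i$ for some $D \in \Pic S$ and $\alpha_i \in \ZZ$. Because $|\tilde{D}|$ is the moving part of $\{\varphi^*\mathfrak{D}_0\}_{\mathfrak{D}_0 \in |\mathfrak{D}|}$, it has no fixed components; since each $\mathfrak{e}_i$ is irreducible this forces $\alpha_i = \tilde{D}.\mathfrak{e}_i \geq 0$, and two distinct general members share no component, so $\tilde{D}^2 \geq 0$, i.e. $D^2 \geq \sum_i \alpha_i^2$. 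Since $\mathfrak{D}_0$ has degree $2$ in $\PP^3$ and $\deg\varphi = 3$, we get $\varphi^*\mathfrak{D}_0.\tilde{H} = 6$; the fixed part of $\{\varphi^*\mathfrak{D}_0\}$ lies over the zero-dimensional base locus of $|\mathfrak{D}|$ and hence consists of $\varphi$-contracted curves, which have zero intersection with $\tilde{H}$. Thus $\tilde{D}.\tilde{H} = 6$, which, writing $a := \sum_i \alpha_i$ and using $M \eqv L$ numerically, rewrites as $D.L = 6 + a$.

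To verify nefness of $D$, pick an irreducible $\Gamma \subset S$ with strict transform $\tilde{\Gamma}$. Expanding $f^*D = \tilde{D} + \sum_i \alpha_i \mathfrak{e}_i$ gives $D.\Gamma = \tilde{D}.\tilde{\Gamma} + \sum_i \alpha_i \, \mathrm{mult}_{p_i}(\Gamma) \geq 0$: the first summand is non-negative because $|\tilde{D}|$ has finite base locus, so $\tilde{\Gamma}$ is avoided by a generic member of $|\tilde{D}|$, and the remaining terms are non-negative by the previous step.

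The bulk of the work is ruling out $D^2 \in \{0,2\}$ (recall $D^2$ is even on an Enriques surface). If $D^2 = 0$, then $\sum_i \alpha_i^2 = 0$ forces $a = 0$, so $\tilde{D} = f^*D$ and $D$ is an elliptic pencil with $D.L = 6$; Claim \ref{cl:eecc1} applies to $|f^*D|$, so its general member maps generically one-to-one onto its image in $S_0$, forcing the image to have degree $6$, a contradiction since $\varphi(\tilde{D}_0) \subseteq \mathfrak{D}_0$ has degree $2$. If $D^2 = 2$ and $a = 0$, Claim \ref{cl:eecc1} again rules out $\phi(D) \geq 2$; when $\phi(D) = 1$ we have $D \sim 2F + G$ with $G$ a nodal cycle ($G^2 = -2$) as a fixed component of $|D|$, so $f^*G$ would be a fixed component of $|f^*D| = |\tilde{D}|$, contradicting that $\tilde{D}$ is the moving part. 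If $D^2 = 2$ and $a = 2$, then $\sum_i \alpha_i^2 = 2$ forces two points $p, q \in Z$ with $\alpha_p = \alpha_q = 1$, and an $h^0$-count ($h^0(\tilde{D}) = h^0(D) = 2$) forces every $D_0 \in |D|$ to contain both $p$ and $q$. When $\phi(D) \geq 2$, $|D|$ is a complete base-component free pencil with $p, q$ in every fibre, violating (C3); when $\phi(D) = 1$, the fixed nodal cycle $G$ of $|D|$ contains $p, q$, but $G$ is contained in $\Pi$ by construction, violating (C1).

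With $D^2 \geq 4$ established, the Hodge index inequality $L^2 \cdot D^2 \leq (L.D)^2$ yields $72 \leq (6 + a)^2$, forcing $6 + a \geq 9$ and hence $a \geq 3$. The principal obstacle is the case analysis for small $D^2$, which exploits decisively Claim \ref{cl:eecc1} together with the carefully chosen genericity properties (C1) and (C3) of $Z$.
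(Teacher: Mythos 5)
Your overall strategy is the paper's: write $\tilde{D} \sim f^*D - \sum \alpha_i \mathfrak{e}_i$, show $\alpha_i \geq 0$ and $D$ nef, rule out $D^2 \in \{0,2\}$ by playing the degree computation $\tilde{D}.\tilde{H}=6$ (i.e.\ a generically $3:1$ map onto a conic) against Claim \ref{cl:eecc1}, and finish with the Hodge index theorem. The first, second and last paragraphs are fine. The problem is the case $D^2=2$, where your dichotomy is broken. Since $\phi(D) \leq \lfloor\sqrt{D^2}\rfloor = 1$, the branch ``$\phi(D) \geq 2$'' is empty, so everything rests on your claim that $\phi(D)=1$ forces $D \sim 2F+G$ with $G$ a nodal cycle appearing as a fixed component of $|D|$. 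That claim is false: the generic nef class of degree $2$, e.g.\ $D \sim E_1+E_2$ with $E_i$ half-pencils and $E_1.E_2=1$, has $\phi(D)=1$ and $|D|$ base-component free (a pencil of irreducible genus-$2$ curves with two base points). In exactly this situation your written argument produces no contradiction, because there is no fixed component to pull back. The correct dichotomy is ``$|D|$ base-component free'' versus ``$|D|$ has a fixed component'': in the first case Claim \ref{cl:eecc1} applies directly (it is stated for any base-component free pencil with $(f^*D).\tilde{H}=6$ or $8$, and its proof is written precisely to handle $P^2=2$ with two base points), and in the second case your pullback-of-the-fixed-part argument works. The same mislabelling infects your $a=2$ subcase, where the (C3) contradiction you derive is valid for base-component free $|D|$ but is filed under the vacuous heading ``$\phi(D)\geq 2$''.

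There is a second omission: for $D^2=2$ you only treat $a=0$ and $a=2$, but $\sum\alpha_i^2 \leq 2$ also allows a single $\alpha_j=1$, giving $(f^*D).\tilde{H}=7$, which is outside the scope of Claim \ref{cl:eecc1}, and your (C3)/(C1) arguments as written do not address it. The paper closes both this gap and the previous one by the observation (used inside the proof of Claim \ref{cl:eecc1}) that the nonzero $\alpha_i$ occur only at blown-up points that are base points of $|D|$; for a base-component free $|D|$ with $D^2=2$ and $D.L \leq 8$ these base points lie on half-pencils $E$ with $E.L \leq 8 < L^2$, hence on $\Pi$, and so are avoided by $Z$ by (C1). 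This forces all $\alpha_i=0$ when $D^2 \leq 2$, reducing everything to $(f^*D).\tilde{H}=6$ and Claim \ref{cl:eecc1}. Your proof needs this ingredient (or an equivalent) to be complete.
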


\begin{proof}
We have $\tilde{D}.\tilde{H}=6$ and any member of $|\tilde{D}|$ is mapped generically $3:1$ by $\varphi$ to a conic on $S_0$. Therefore $\tilde{D} \sim f^*D- \sum \alpha_i \mathfrak{e}_i$ for some $D \in \Pic S$ such that $|D|$ is base-component free. Moreover if $\dim |D| = 1$ then the blown-up points in $S$ are base-points of $|D|$ with multiplicity $\alpha_i$. If $D^2 = 0$ this can happen only if $\alpha_i = 0$ for every $i$ and $|D|$ is a pencil on $S$ such that $(f^*D).\tilde{H}=6$, contradicting Claim \ref{cl:eecc1}.  If $D^2 = 2$ this can happen only if $\alpha_i = 0$ for every $i$ except two of them, say $\alpha_j = \alpha_k = 1$ and $|D|$ is a pencil on $S$ such that $(f^*D).\tilde{H}=8$, again contradicting Claim \ref{cl:eecc1}. Hence $D^2 \geq 4$ and the Hodge index theorem yields $D.L \geq 9$, whence $6=\tilde{D}.\tilde{H} =D.L - \sum \alpha_i$ implies $\sum \alpha_i \geq 3$.
\end{proof}

\begin{claim} 
\label{cl:eecc1'}
For any distinct $i,j,k \in \{1, \ldots, 9\}$ we have
\[ h^0(\tilde{H} -\mathfrak{e}_i -\mathfrak{e}_j-\mathfrak{e}_k) \leq 1. \]
\end{claim}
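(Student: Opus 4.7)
The approach is by contradiction: I will assume $h^0(\tilde{H} - \mathfrak{e}_i - \mathfrak{e}_j - \mathfrak{e}_k) \geq 2$ and use the geometry of $\varphi$ to produce three collinear points $\varphi_A(x_i),\varphi_A(x_j),\varphi_A(x_k) \in C_0 \subset \PP^3$, contradicting property (C2) of $Z$.

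The first step is to transfer the hypothesis to $S_0$. Multiplication by the canonical section of $\mathfrak{e}_i + \mathfrak{e}_j + \mathfrak{e}_k$ yields an injection $H^0(\tilde{H} - \mathfrak{e}_i - \mathfrak{e}_j - \mathfrak{e}_k) \hookrightarrow H^0(\tilde{H})$ whose image consists of sections of $\tilde{H}$ whose zero divisors contain $\mathfrak{e}_i + \mathfrak{e}_j + \mathfrak{e}_k$. Since $\varphi$ is defined by $|\tilde{H}|$, these come from two distinct hyperplanes $H_1,H_2 \subset \PP^3$ with $\varphi^{*}(H_a \cap S_0) \geq \mathfrak{e}_i + \mathfrak{e}_j + \mathfrak{e}_k$, which forces $\varphi(\mathfrak{e}_l) \subset H_a$ for all $l \in \{i,j,k\}$ and $a \in \{1,2\}$. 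Because $\tilde{H}.\mathfrak{e}_l = 1$, the restriction $\varphi|_{\mathfrak{e}_l}$ is birational onto its image, so $\ell_l := \varphi(\mathfrak{e}_l)$ is a line in $\PP^3$. Since two distinct hyperplanes of $\PP^3$ meet in a line, and $\ell_i \cup \ell_j \cup \ell_k \subset H_1 \cap H_2$, the three lines must coincide: $\ell_i = \ell_j = \ell_k =: \ell$.

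Next I bring the geometry back to $C$. Because $C$ passes simply through each $x_l$, we have $\tilde{C}.\mathfrak{e}_l = 1$, so $\tilde{C}$ meets $\mathfrak{e}_l$ in exactly one point $p_l$; the isomorphism $f|_{\tilde{C}} \colon \tilde{C} \to C$ sends $p_l$ to $x_l$. Using the commutative diagram $\varphi|_{\tilde{C}} = \varphi_A \circ f|_{\tilde{C}}$ set up in the excerpt, $\varphi_A(x_l) = \varphi(p_l) \in \varphi(\mathfrak{e}_l) = \ell$ for every $l \in \{i,j,k\}$. Hence the three distinct points $\varphi_A(x_i),\varphi_A(x_j),\varphi_A(x_k)$ lie on a common line of $\PP^3$.

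To conclude, since $A$ is very ample with $h^0(A) = 4$, three distinct points $q_1,q_2,q_3 \in C$ are collinear in $\PP^3$ under $\varphi_A$ exactly when they impose only two conditions on $|A|$, i.e., when $h^0(A - q_1 - q_2 - q_3) \geq 2$. Applied to $\{x_i,x_j,x_k\}$, and using $Z \sim A$ on $C$, the residual degree-six divisor $\sum_{l \notin \{i,j,k\}} x_l$ lies in the pencil $|A - x_i - x_j - x_k|$, which is a base-point-free $g^1_6$; so six of the nine points of $Z$ form a $g^1_6$ on $C$, contradicting property (C2). I expect the only point demanding care to be the bookkeeping translating sections vanishing along $\mathfrak{e}_i + \mathfrak{e}_j + \mathfrak{e}_k$ into hyperplane sections of $S_0$ containing each of the three lines $\ell_l$; once this is correctly set up the argument reduces directly to the collinearity criterion for the very ample system $|A|$ and to (C2), which was evidently engineered for precisely this purpose.
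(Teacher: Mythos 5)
Your proof is correct, and it reaches the same final contradiction as the paper --- six of the nine points of $Z$ moving in a pencil of degree six, against property (C2) --- but it gets there by a genuinely different middle step. The paper's own argument is a two-line cohomological computation: from the restriction sequence
\[ 0 \to f^*K_S-\mathfrak{e}_i-\mathfrak{e}_j-\mathfrak{e}_k \to \tilde{H}-\mathfrak{e}_i-\mathfrak{e}_j-\mathfrak{e}_k \to \O_{\tilde{C}}(\tilde{H}-\mathfrak{e}_i-\mathfrak{e}_j-\mathfrak{e}_k) \to 0 \]
one reads off $h^0(\O_C(A-x_i-x_j-x_k)) \geq 2$ at once, since the kernel has no sections and $\O_{\tilde{C}}(\tilde{H}-\mathfrak{e}_i-\mathfrak{e}_j-\mathfrak{e}_k) \cong \O_C(x_{l_1}+\cdots+x_{l_6})$ for the six remaining points. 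You instead pass through the projective geometry of $\varphi$: the pencil of hyperplanes whose pullbacks contain $\mathfrak{e}_i+\mathfrak{e}_j+\mathfrak{e}_k$ forces the three lines $\varphi(\mathfrak{e}_i),\varphi(\mathfrak{e}_j),\varphi(\mathfrak{e}_k)$ to coincide with the common line of the pencil, hence the images $\varphi_A(x_i),\varphi_A(x_j),\varphi_A(x_k)$ are collinear, which for the very ample $A$ with $h^0(A)=4$ is equivalent to $h^0(A-x_i-x_j-x_k)\geq 2$. All your intermediate steps check out: $\tilde{H}.\mathfrak{e}_l=1$ does imply $\mathfrak{e}_l$ is not contracted and maps birationally to a line; the commutativity $\varphi|_{\tilde{C}}=\varphi_A\circ f|_{\tilde{C}}$ gives $\varphi_A(x_l)\in\varphi(\mathfrak{e}_l)$; and the collinearity criterion for an embedding by a complete linear system is the standard one. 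What your route buys is extra geometric information (the coincidence of the three lines), which the claim does not need; what it costs is length, since the collinearity detour is ultimately a synthetic restatement of the same fact --- that $x_i+x_j+x_k$ imposes only two conditions on $|A|$ --- that the exact sequence delivers immediately. (The base-point-freeness you assert for the resulting $g^1_6$ is neither needed nor obviously relevant: (C2) forbids any pencil at all supported on six of the nine points.)
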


\begin{proof}
Assume, to get a contradiction, that $h^0(\tilde{H} -\mathfrak{e}_1-\mathfrak{e}_2-\mathfrak{e}_3) \geq 2$. Then, from
\[ 0 \hpil f^*K_S-\mathfrak{e}_1-\mathfrak{e}_2-\mathfrak{e}_3  \hpil \tilde{H} -\mathfrak{e}_1-\mathfrak{e}_2-\mathfrak{e}_3   
\hpil \O_{\tilde{C}}(\tilde{H} -\mathfrak{e}_1-\mathfrak{e}_2-\mathfrak{e}_3)  \hpil   0, \] 
we find $h^0(\O_{\tilde{C}}(\tilde{H} -\mathfrak{e}_1-\mathfrak{e}_2-\mathfrak{e}_3)) \geq 2$. But $\O_{\tilde{C}}(\tilde{H} -\mathfrak{e}_1-\mathfrak{e}_2-\mathfrak{e}_3) \iso \O_C(A-x_1-x_2-x_3) \iso \O_C(x_4 + \cdots +x_9)$, where the $x_i$'s are the nine points of $Z$.  But this means that $|\O_C(x_4 + \cdots +x_9)|$ is a $g^1_6$, contradicting property (C2). 
\end{proof}

\begin{claim} 
\label{cl:eecc2}
Let $\tilde{G}$ be an irreducible curve on $\tilde{S}$ different from the $\mathfrak{e}_i$'s such that $\varphi$ maps $\tilde{G}$ generically $1:1$ or $2:1$ to a line or to a point.

\noindent Then $\tilde{G} \sim f^*G$ for some effective irreducible $G \in \Pic S$.
\end{claim}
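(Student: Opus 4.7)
The plan is to argue by contradiction. Since $\tilde{G}$ is irreducible and different from each $\mathfrak{e}_i$, the direct image $G := f_{*}\tilde{G}$ is an irreducible curve on $S$, and
\[
\tilde{G} \sim f^{*}G - \sum_{i} \beta_{i}\,\mathfrak{e}_{i},
\]
where $\beta_{i} = \operatorname{mult}_{x_{i}}(G) \geq 0$. The conclusion $\tilde{G} \sim f^{*}G$ is equivalent to $G \cap Z = \emptyset$, so I suppose that some $\beta_{i} \geq 1$ and look for a contradiction. The numerical input is
\[
\tilde{G}\cdot\tilde{H} = G\cdot M - \sum_{i} \beta_{i} \in \{0,1,2\},
\]
and since $K_{S}$ is $2$-torsion, $G\cdot L = G\cdot M$ lies in $[\sum_{i} \beta_{i},\, \sum_{i} \beta_{i} + 2]$. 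I plan to combine this bound with the genericity conditions (C1)--(C4) on $Z$, the inequalities $\phi(L) \geq 3$ and $\phi(M) \geq 2$ (both already available), and the Hodge index theorem applied to $(G, M)$ on $S$, organising the argument by the value of $G^{2} \in \{-2, 0, 2, 4, \dots\}$.

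The easy part of the case analysis proceeds as follows. If $G^{2} = -2$ then $G$ is a nodal curve, hence $G \in \Pi$, and (C1) gives $G \cap Z = \emptyset$, a contradiction. If $G^{2} = 0$ and $G$ is a half-pencil the same conclusion holds. If $G^{2} = 0$ and $G$ belongs to a genuine elliptic pencil, then (C3) yields $\sum_{i} \beta_{i} = \length(G \cap Z) \leq 1$ with equality forcing $G$ smooth; combining with $\phi(L) \geq 3$ and $\phi(M) \geq 2$ pins the configuration to $\phi(L) = 3$, $\sum_{i} \beta_{i} = 1$, $G \cdot L = 3$ and $\tilde{G}\cdot\tilde{H} = 2$, so that $\tilde{G} \cong G$ is smooth elliptic mapping $2{:}1$ onto a line of $S_{0}$. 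If $G^{2} \geq 2$, Hodge index (using $M^{2} = 18$) gives $G \cdot M \geq \lceil\sqrt{18\,G^{2}}\rceil \geq 6$, whence $\sum_{i} \beta_{i} \geq 4$, and the lower bound grows with $G^{2}$. For $G^{2} = 2$, $|G|$ is a base-component-free pencil and (C3) forces $\sum_{i} \beta_{i} \leq 1$, a contradiction. For $G^{2} \geq 4$, $|G|$ is a net, and if nonhyperelliptic then (C4) gives $\sum_{i} \beta_{i} \leq 1$, again incompatible with the Hodge lower bound.

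The genuinely hard part will be to close out two residual subcases: (i) the smooth elliptic scenario just described, and (ii) $|G|$ being a hyperelliptic net (the one configuration for which (C4) does not directly control the generic smooth member). For (i) I plan to decompose $\varphi^{*}(\varphi(\tilde{G})) = \tilde{G} + \tilde{G}'$, where $\tilde{G}'$ is the unique irreducible component of $\tilde{H}$-degree one mapping $1{:}1$ onto the image line. Either $\tilde{G}' = \mathfrak{e}_{j}$ for some $j$, in which case a projection-formula self-intersection computation using $\ell^{2} = -1$ on $S_{0}$ produces a contradiction; or $\tilde{G}'$ itself satisfies the hypotheses of the claim with strictly smaller $\tilde{H}$-degree, whence an inductive application of the claim gives $\tilde{G}' = f^{*}G'$ for some irreducible $G'$, and the relation $\tilde{G} + \tilde{G}' \sim \tilde{H} - \tilde{D}$ (with $|\tilde{D}|$ the strict transform of the pencil of conics residual to the image line) combined with Corollary~\ref{cor:eecc1} and Claim~\ref{cl:eecc1'} eliminates the configuration. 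For (ii) I plan to invoke the structural fact that a hyperelliptic net on an Enriques surface carries an auxiliary elliptic pencil $|E|$ with $E \cdot G = 2$, and then apply (C3) to $|E|$ together with $\phi(L) \geq 3$ and the Hodge bound to rule out the required numerical profile. The main obstacle will be subcase (i), since the absence of moduli on $\tilde{G}$ means the contradiction must be extracted from the rigid geometry of lines on $S_{0}$ together with the generic choice of $Z \in |A|$.
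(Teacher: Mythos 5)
Your overall strategy (write $\tilde{G}\sim f^*G-\sum\beta_i\mathfrak{e}_i$, assume some $\beta_i>0$, and play the bound $\tilde{G}.\tilde{H}\leq 2$ against the genericity conditions (C1)--(C4)) matches the paper's starting point, and your treatment of $G^2=-2$, of half-pencils, and of $G^2=2$ is fine. But there is a genuine gap at the decisive step, the case $G^2\geq 4$. You claim that for a nonhyperelliptic net $|G|$ condition (C4) gives $\sum\beta_i\leq 1$. It does not: (C4) bounds $\length(G\cap Z)$ only for \emph{singular} members and for \emph{smooth hyperelliptic} members of the net, precisely because those form one-dimensional families; it says nothing about a general smooth nonhyperelliptic member, and no genericity condition on $Z$ could, since a net is two-dimensional and through any two points of $Z$ there passes a member. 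This case is where all the work lies. The paper instead (a) uses the residual pencil of conics and Corollary \ref{cor:eecc1} to get $\tilde{H}\geq\tilde{G}+\tilde{D}$ and hence, via $L.(G+D)\leq 18$ and the Hodge index theorem, pins down $G^2=(D')^2=4$, $G.D'=5$, $L.G=9$ and $L+K_S\sim D'+G$; (b) extracts from (C4) only that $G$ is \emph{smooth} of genus $3$ and, if $\phi(G)=2$, nonhyperelliptic, which kills the generically $1{:}1$ case (genus) and reduces the $2{:}1$ case to $\phi(G)=1$, where writing $G\sim 2E_1+E_2$ forces $L\eqv 3(E_1+E_2)$, contradicting the standing hypothesis that $L$ is not $3$-divisible; and (c) treats the contracted case ($\tilde{G}.\tilde{H}=0$, hence all $\beta_i=1$) separately, producing $h^0(\tilde{H}-\mathfrak{e}_1-\mathfrak{e}_2-\mathfrak{e}_3)\geq 2$ against Claim \ref{cl:eecc1'}. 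None of (a)--(c) is present in your plan, and your sketch of the hyperelliptic subcase (applying (C3) to an auxiliary pencil) does not reach the $3$-divisibility contradiction that actually closes it.

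A smaller but real flaw: your ``residual subcase (i)'' is vacuous, and the arithmetic that produces it is wrong. An irreducible member $G$ of a genuine elliptic pencil satisfies $G\sim 2E$ for a half-pencil $E$, so $G.L\geq 2\phi(L)\geq 6$, not merely $G.L\geq\phi(L)=3$; combined with $\tilde{G}.\tilde{H}\leq 2$ this gives $\sum\beta_i\geq 4$, which already contradicts (C3) (a singular or reducible member misses $Z$, a smooth one meets it in at most one point). So the smooth elliptic configuration with $G.L=3$ that you flag as the hard residual case cannot occur, and the inductive argument you reserve for it is unnecessary; had it occurred, the proposed induction on $\tilde{H}$-degree and the appeal to $\ell^2=-1$ on a possibly singular cubic would themselves need justification.
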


\begin{proof}
Assume, to get a contradiction, that $\tilde{G} \sim f^*G - \sum \beta_i \mathfrak{e}_i$, with at least one $\beta_i >0$. This means that $G \cap Z \neq \emptyset$, so that by property (C1) we must have $h^0(G) \geq 2$ and consequently $G.L \geq 2\phi(L) \geq 6$. By assumption we have $2 \geq \tilde{G}.\tilde{H}=G.L- \sum \beta_i$, whence $\sum \beta_i \geq 4$. Now we cannot have $\length(G \cap Z) = 1$, for then $\tilde{G} \sim f^*G- \beta \mathfrak{e}_x$ with $\beta \geq 4$ and one exceptional curve $\mathfrak{e}_x$ lying over the only intersection point $x$ between $G$ and $Z$. Hence
\[ 2p_a(\tilde{G})-2 = \tilde{G}.(\tilde{G}+K_{\tilde{S}}) = G^2- \beta^2+\beta = 4-\beta.(\beta-1) \leq -8,\]
an absurdity. Therefore $\length(G \cap Z) \geq 2$ and from property (C3), by \cite[Prop.3.1.6 and 3.1.4]{cd}, we deduce that $G^2 \geq 4$.

By assumption we have $\tilde{H} \geq \tilde{G} + \tilde{D}$, where $|\tilde{D}|$ is the strict transform of a pencil of conics on $S_0$. It follows that $L+K_S \geq G+D$. Moreover $\tilde{D} \sim f^*D- \sum \alpha_i \mathfrak{e}_i$, with $D^2 \geq 4$ and $\sum \alpha_i \geq 3$ by Corollary \ref{cor:eecc1}. The Hodge index theorem yields $G.L \geq 9$ and $D.L \geq 9$. Now both $G$ and $D$ are nef, whence $18 = L^2 \geq L.(G+D) \geq D^2 + G^2 + 2D.G$ and we get that there exists $D'  \geq D$ such that
\begin{eqnarray}
\label{eq:eecc1'''}
G^2=(D')^2=4, \; G.D'=5, \; L.G = 9 \; \mbox{and} \; L+K_S \sim D' + G. 
\end{eqnarray}
Again from $\length(G \cap Z) \geq 2$ and property (C4) it follows that $G$ is smooth and that if furthermore 
$\phi(G)=2$, then $G$ is nonhyperelliptic. Now $p_a(G)=3$, whence $\varphi$ cannot map $\tilde{G}$ generically $1:1$ to a line, and if $\varphi$ maps $\tilde{G}$ generically $2:1$ to a line, then we must have 
$\phi(G)=1$. Using \cite[Lemma2.4]{kl1} we can write $G \sim 2E_1+E_2$, with $E_i >0$, $E_i^2=0$ and $E_1.E_2=1$. Now $5=(2E_1+E_2).D'$ together with $3 \leq \phi(L) \leq E_1.L = E_1.G+ E_1.D'= 1 + E_1.D'$ and $3 \leq \phi(L) \leq E_2.L = E_2.G + E_2.D'= 2 + E_2.D'$ imply $E_1.D'=2$ and $E_2.D'=1$, and one easily sees that this implies $D' \eqv E_1+2E_2$, so that $L \eqv 3(E_1+E_2)$, contrary to our assumptions.

We are left with the case of $\varphi$ contracting $\tilde{G}$ to a point, that is $\tilde{H}.\tilde{G}=0$. This implies $\sum \beta_i=9$ and since $G$ is smooth, we must have
\begin{equation}
\label{eq:eecc1'}
\tilde{G} = f^* G - \sum_{i=1}^9 \mathfrak{e}_i.
\end{equation}
Moreover, as $0 \leq \tilde{D^2} = D^2- \sum \alpha_i^2 = 4- \sum \alpha_i^2$ and $\sum \alpha_i \geq 3$, we must have 
$\alpha_i=1$ for exactly three or four distinct $i$'s and $\alpha_i=0$ for the rest. Possibly after rearranging indices we can therefore write
\begin{equation}
\label{eq:eecc1''}
\tilde{D} = f^* D - \mathfrak{e}_1-\mathfrak{e}_2-\mathfrak{e}_3 -\varepsilon \mathfrak{e}_4, \; \mbox{with} \; 
\varepsilon=0 \; \mbox{or} \; 1. 
\end{equation}
Now for some $\tilde{G}_1 \geq 0$ we have $\tilde{H} \sim \tilde{G}_1 + \tilde{D} + \tilde{G}$
and combining with \eqref{eq:eecc1'''}-\eqref{eq:eecc1''} we get
\begin{eqnarray*}
\tilde{G}_1 \sim & f^*(L+K_S - G - D) + \mathfrak{e}_1 + \mathfrak{e}_2 + \mathfrak{e}_3 +\varepsilon \mathfrak{e}_4,  
\end{eqnarray*}
whence $\mathfrak{e}_i.\tilde{G}_1 = -1$ for $1 \leq i \leq 3$. Therefore $\tilde{G}_1 \geq \mathfrak{e}_1 + \mathfrak{e}_2 + \mathfrak{e}_3$, whence
\[ h^0(\tilde{H}-\mathfrak{e}_1-\mathfrak{e}_2-\mathfrak{e}_3)=h^0(\tilde{G}_1-\mathfrak{e}_1-\mathfrak{e}_2-\mathfrak{e}_3 + \tilde{D} + \tilde{G}) \geq h^0(\tilde{D}) =2, \]
contradicting Claim \ref{cl:eecc1'}.
\end{proof}

We will now denote by $\mathfrak{L_j}$ the line $\varphi(\mathfrak{e}_j)$, for $j=1,\ldots, 9$ (note that these lines may coincide), and by $\mathfrak{D}_j$ the pencil of conics on $S_0$ given by the hyperplanes through $\mathfrak{L_j}$ (in other words $\O_{S_0}(1) \sim \mathfrak{L_j} + \mathfrak{D}_j$). We denote the strict transform of this pencil by $|\tilde{D_j}|$. In particular $\tilde{H}.\tilde{D_j}=6$ and by Corollary \ref{cor:eecc1} we have
\begin{equation}
\label{eq:eecc2}
\tilde{D_j} \sim f^*D_j- \sum_{i=1}^9 \alpha_{ji} \mathfrak{e}_i, \; \mbox{with} \;  D_j^2 \geq 4 \; 
\mbox{and} \;  \sum \alpha_{ji} \geq 3.  
\end{equation}
We have $\tilde{H} \sim \varphi^*(\mathfrak{L_j} + \mathfrak{D}_j)$, which yields,
for each $j=1, \ldots, 9$,
\begin{equation}
\label{eq:eecc3}
\tilde{H} \sim \tilde{\Delta}_{0j} + \tilde{\Delta}_{1j} + \tilde{D_j}, 
\end{equation}
where $\tilde{\Delta}_{1j}>0$ such that none of its components are contracted by $\varphi$ and $\varphi(\tilde{\Delta}_{1j})=\mathfrak{L_j}$, and $\tilde{\Delta}_{0j} \geq 0$ is contracted by $\varphi$, that is $\tilde{H}.\tilde{\Delta}_{0j}=0$.

Clearly $\mathfrak{e}_j \sub \tilde{\Delta}_{1j}$ by construction, so that three cases may occur:
\begin{equation}
\label{eq:eecc4}
\tilde{\Delta}_{1j} = \mathfrak{e}_j + \mathfrak{e}_a +\mathfrak{e}_b, \; \mbox{for some} \; a,b \in \{1, \ldots, 9 \}, a \neq b;
\end{equation}
or
\begin{equation}
\label{eq:eecc5}
\tilde{\Delta}_{1j} = \mathfrak{e}_j + \mathfrak{e}_a +\tilde{\Gamma}_j, \; \mbox{for some} \; a \in \{1, \ldots, 9 \},
\end{equation}
with $\tilde{\Gamma}_j$ irreducible being mapped generically $1:1$ to $\mathfrak{L_j}$ by $\varphi$, $\tilde{\Gamma}_j \neq \mathfrak{e}_i$ for all $i$; or
\begin{equation}
\label{eq:eecc6}
\tilde{\Delta}_{1j} = \mathfrak{e}_j +\tilde{\Gamma}_j,
\end{equation}
with $\tilde{\Gamma}_j$ either irreducible and being mapped generically $2:1$ to $\mathfrak{L_j}$ by $\varphi$ or consisting of two irreducible components $\neq \mathfrak{e}_i$ for all $i$.

\noindent By Claim \ref{cl:eecc2} we have, in all cases, that
\begin{equation}
\label{eq:eecc7}
\tilde{\Gamma}_j = f^* \Gamma_j \; \mbox{and} \; \tilde{\Delta}_{0j} = f^*\Delta_{0j}
\end{equation}
From $\tilde{H} \sim f^*(L+K_S) - \sum_{i=1}^9 \mathfrak{e}_i$, \eqref{eq:eecc3}, \eqref{eq:eecc7} and \eqref{eq:eecc2} we get, for each $p \in \{1, \ldots, 9\}$,
\[ 1 = \tilde{H}.\mathfrak{e}_p = (\tilde{\Delta}_{0j} + \tilde{\Delta}_{1j} + \tilde{D_j}).\mathfrak{e}_p = (f^*\Delta_{0j}).\mathfrak{e}_p + \tilde{\Delta}_{1j}.\mathfrak{e}_p + ( f^*D_j- \sum_{i=1}^9 \alpha_{ji} \mathfrak{e}_i).\mathfrak{e}_p = \tilde{\Delta}_{1j}.\mathfrak{e}_p + \alpha_{jp}. \]
Using \eqref{eq:eecc4}-\eqref{eq:eecc7} we deduce that
\[\alpha_{jp} =  \begin{cases} 1 & {\rm if} \ p \not\in \{j, a, b \} \\ 2 & {\rm if} \ p \in \{j, a, b \} \end{cases} \mbox{\ in case} \ \eqref{eq:eecc4} \ \mbox{and} \ \alpha_{jp} =  \begin{cases} 1 & {\rm if} \ p \not\in \{j, a \} \\ 2 & {\rm if} \ p \in \{j, a \} \end{cases} \mbox{\ in case} \ \eqref{eq:eecc5}.\]  
Moreover, we have
\begin{equation} 
\label{eq:eecc8}
L+K_S \sim D_j+ \Gamma_j+ \Delta_{0j} \; 
\mbox{(with $\Gamma_j=0$ in case \eqref{eq:eecc4})}.
\end{equation}
If we are in case \eqref{eq:eecc4}, from \eqref{eq:eecc2} and \eqref{eq:eecc8} we have
\[ 0 \leq \tilde{D_j}^2 = D_j^2 - \sum_{i=1}^9 \alpha_{ji}^2  \leq 18+ \Delta_{0j}^2 -18 = \Delta_{0j}^2, \]
which implies $\Delta_{0j}=0$. Reordering indices we can from \eqref{eq:eecc3} assume that 
$\tilde{H} \sim \mathfrak{e}_1+\mathfrak{e}_2+\mathfrak{e}_3 + \tilde{D_j}$,
whence $h^0(\tilde{H} -\mathfrak{e}_1-\mathfrak{e}_2-\mathfrak{e}_3)=2$, contradicting Claim \ref{cl:eecc1'}. 

If we are in case \eqref{eq:eecc5}, then $\Gamma_j.L=1, \Gamma_j^2=-2$ and we claim that $D_j ^2 \leq (L- \Gamma_j)^2$. The latter being obvious if $\Delta_{0j}=0$ (using \eqref{eq:eecc8}), we assume $\Delta_{0j}>0$. By \eqref{eq:eecc7} we have $0 = \tilde{\Delta}_{0j}.\tilde{H} = \tilde{\Delta}_{0j}.(f^*L- \sum \mathfrak{e}_i) = f^*(\Delta_{0j}).(f^*L- \sum \mathfrak{e}_i) = \Delta_{0j}.L$. Now write $\tilde{\Delta}_{0j} = \sum_{q} \tilde{G}_{qj}$ with $\tilde{G}_{qj} > 0, \Supp(\tilde{G}_{qj})$ connected and $\tilde{G}_{qj}.\tilde{G}_{pj} = 0$ for $q \neq p$. Then $\tilde{H}.\tilde{G}_{qj} = 0$ for every $q$ and, as in \eqref{eq:eecc7}, $\tilde{G}_{qj} = f^{\ast} G_{qj}$. Therefore $L.G_{qj} = 0$ for every $q$ and the Hodge index theorem and \eqref{eq:eecc7} imply that $\tilde{G}_{qj}^2 = G_{qj}^2 \leq -2$. Moreover, as $\varphi$ maps $\tilde{\Gamma}_j$ to a line and $\tilde{G}_{qj}$ to a point, we have that $\tilde{\Gamma}_j.\tilde{G}_{qj} \leq 1$, whence 
\[ 2\Gamma_j.\Delta_{0j} + \Delta_{0j}^2 = \tilde{\Delta}_{0j}.(2 \tilde{\Gamma}_j + \tilde{\Delta}_{0j}) = \sum_{q} \tilde{G}_{qj}.(2 \tilde{\Gamma}_j + \sum_{q} \tilde{G}_{qj}) = \sum_{q} (2 \tilde{G}_{qj}.\tilde{\Gamma}_j + \tilde{G}_{qj}^2) \leq 0. \]
Therefore
\[ D_j ^2= (L- \Gamma_j - \Delta_{0j})^2 = (L- \Gamma_j)^2 -2 (L-\Gamma_j).\Delta_{0j} + \Delta_{0j}^2 =
(L- \Gamma_j)^2 + 2\Gamma_j.\Delta_{0j} + \Delta_{0j}^2 \leq (L- \Gamma_j)^2. \]
But this yields the contradiction $\tilde{D_j}^2=D_j^2 - \sum_{i=1}^9 \alpha_{ji}^2 \leq (L- \Gamma_j)^2 -15 = - 1$.
Therefore we must be in case \eqref{eq:eecc6} for all $j$. In particular, using \eqref{eq:eecc7},
\begin{equation}
\label{eq:eecc9}
\varphi^*(\mathfrak{L_j}) \leq \mathfrak{e}_j+\tilde{\Gamma}_j+ \tilde{\Delta}_{0j} \sim
                               \mathfrak{e}_j+f^*(\Gamma_j+\Delta_{0j}), \; \mbox{for all} \; j.
\end{equation}
This implies that all the $\mathfrak{L_j}$ are distinct lines. As $S_0$ cannot contain more than $6$ mutually disjoint lines (since otherwise its minimal desingularization $\tilde{S}_0$ would contain at least $7$ disjoint $(-1)$-curves, which is impossible, since it is obtained by blowing up $\PP^2$ in $6$ points), we must have $\mathfrak{L_i} \cap \mathfrak{L_j} \neq \emptyset$ for some $i \neq j$. But then $|\mathfrak{L_i} + \mathfrak{L_j}|$ is a pencil of conics on $S_0$, whence by \eqref{eq:eecc9} and \eqref{eq:eecc7} we have
\[ \dim |\mathfrak{e}_i+\mathfrak{e}_j+f^*(\Gamma_i+\Delta_{0i}+\Gamma_j+\Delta_{0j})| \geq 1. \]
But since $\mathfrak{e}_i.(\mathfrak{e}_i+\mathfrak{e}_j+f^*(\Gamma_i+\Delta_{0i}+\Gamma_j+\Delta_{0j}))=-1$ we see that $\mathfrak{e}_i$, and similarly $\mathfrak{e}_j$, are both fixed in $|\mathfrak{e}_i+\mathfrak{e}_j+f^*(\Gamma_i+\Delta_{0i}+\Gamma_j+\Delta_{0j})|$. Therefore
\[ \dim |f^*(\Gamma_i+\Delta_{0i}+\Gamma_j+\Delta_{0j})| \geq 1, \]
but since $f^*(\Gamma_i+\Delta_{0i}+\Gamma_j+\Delta_{0j}).\tilde{C}= (\Gamma_i+\Delta_{0i}+\Gamma_j+\Delta_{0j}).L=4$ we obtain a $g^1_4$ on $C$, a contradiction.

This concludes the proof of the case $r = 3$.

\section{Curves of Clifford dimensions from $4$ to $9$}
\label{sec:r=4-9}

In this section we will show that there is no exceptional curve $C$ of Clifford dimension $r$, with $4 \leq r \leq 9$, on an Enriques surface $S$.  Set $L = \O_S(C)$ and let $A$ be a line bundle on $C$ computing the Clifford dimension. We start with the following result

\begin{lemma} 
\label{useful2}
Assume $C$ is an exceptional curve of Clifford dimension $r$ with $4 \leq r \leq 9$ on an Enriques surface $S$. Then $r \leq 6$ and
\[ C^2=8r-6, \; \phi(C)=r, \; \Cliff C = 2r-3, \; \gon C =2r. \]
Moreover there is a unique line bundle $A$ computing the Clifford dimension and it satisfies $\omega_C \sim 2A$ and $\deg A=4r-3$.
\end{lemma}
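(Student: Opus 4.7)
The plan is to combine the classification of exceptional curves of Clifford dimension $r \leq 9$ from \cite{elms} and \cite{ma} with Enriques-specific numerical bounds on the function $\phi$.

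The bulk of the statement should follow directly from the ELMS conjecture, which is established for $r \leq 9$ by \cite{elms} (the case $r \leq 3$) and by Martens \cite{ma} (the extension to $r \leq 9$): any exceptional curve $C$ of Clifford dimension $r$ in this range carries a unique line bundle $A$ computing $\Cliff C$, with $h^0(A) = r+1$, $\deg A = 4r-3$ and $\omega_C \sim 2A$, and moreover $\Cliff C = 2r-3$ and $\gon C = 2r$. The identity $C^2 = 8r-6$ is then a one-line computation: on an Enriques surface $2K_S \sim 0$, so $C.K_S = 0$ and hence $\deg\omega_C = C^2$; combined with $\omega_C \sim 2A$ this gives $C^2 = 2\deg A = 8r-6$.

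The remaining claims $\phi(C) = r$ and $r \leq 6$ require Enriques-specific input. Lemma \ref{cliffhanger}'s preceding result (Lemma 2.2) already provides $\phi(C) \geq r$. Applying the Cossec-Dolgachev bound $\phi(L)^2 \leq L^2$ for a nef line bundle $L$ with $L^2 > 0$ on an Enriques surface (see \cite{cd}) to $L = \O_S(C)$ yields $r^2 \leq 8r - 6$, hence $r \leq 7$; this excludes $r = 8, 9$, and for $r = 5, 6$ pins down $\phi(C) = r$ exactly since then $r \leq \phi(C) \leq \lfloor \sqrt{8r-6} \rfloor = r$.

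The main obstacles are the borderline cases $r = 4$ and $r = 7$. For $r = 4$, one must exclude $\phi(C) = 5$: by \cite{elms,ma} every $g^1_{\gon C} = g^1_8$ on $C$ is a subseries of $|A|$, and confronting this with the classification of low-degree pencils on curves on Enriques surfaces (\cite[Prop.3.1]{kl1}) should force the existence of a halfpencil $|2E|$ with $E.C = 4$, giving $\phi(C) \leq 4$. The genuinely hardest case is $r = 7$, where one would have $\phi(C) = 7$ and $C^2 = 50$, so $L$ satisfies $\phi(L)^2 = L^2 - 1$; by \cite[Lemma2.4]{kl1} such an $L$ admits only a very rigid decomposition in $\Pic S$, and I would rule it out by a direct analysis, in the spirit of Sections \ref{sec:plane} and \ref{sec:r=3}, showing that no smooth curve in such a linear system can carry the unique $g^r_{4r-3} = g^7_{25}$ demanded by the ELMS classification.
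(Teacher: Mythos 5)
Your first paragraph and the reduction to $r\leq 7$ match the paper exactly: the paper also quotes \cite[End of \S 5, Thm.~3.6 and Thm.~3.7]{elms} for the uniqueness of $A$, $\omega_C\sim 2A$, $h^0(A)=h^1(A)=r+1$, $\Cliff C=2r-3$, $g(C)=4r-2$ (hence $C^2=8r-6$ and $\gon C=2r$), and then combines $\phi(C)\geq r$ from \eqref{eq:E2a} with $\phi(C)^2\leq C^2$ to get $8\phi(C)\geq \phi(C)^2+6$, i.e.\ $\phi(C)\leq 7$ and $r\leq 7$. Your observation that this already pins down $\phi(C)=r$ for $r=5,6$ is correct.

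The genuine gap is that the two remaining cases --- excluding $\phi(C)=5$ when $r=4$ (where $C^2=26$ and $\lfloor\sqrt{26}\rfloor=5$) and excluding $r=7$ altogether (where $C^2=50$, $\phi(C)=7$) --- are not proved: you only describe what you ``would'' do, and the $r=7$ case in particular is deferred to an unexecuted ``direct analysis in the spirit of Sections \ref{sec:plane} and \ref{sec:r=3}'', which would be a substantial piece of work on its own. Moreover, the heavy geometric machinery you envisage is not needed: the paper disposes of both cases in one stroke by citing \cite[Proposition 1.4]{kl1}, a purely numerical gap statement for the function $\phi$ on Enriques surfaces to the effect that no line bundle satisfies $\phi(L)^2< L^2<\phi(L)^2+\phi(L)-2$. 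This kills $(L^2,\phi)=(26,5)$ since $25<26<28$, and $(L^2,\phi)=(50,7)$ since $49<50<54$, yielding $r=\phi(C)\leq 6$ immediately. So the missing ingredient in your argument is precisely this refinement of the Cossec--Dolgachev inequality $\phi(L)^2\leq L^2$; without it (or a completed substitute for the two borderline cases) the proof is incomplete.
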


\begin{proof}
Since $r \leq 9$ then from \cite[End of \S 5, Thm. 3.6 and Thm. 3.7]{elms} it follows that $A$ is unique, $\omega_C \sim 2A$, $h^0(A)=h^1(A) =r+1$, $\Cliff C =2r-3$ and $g(C)=4r-2$, whence $C^2 =8r-6$ and $\gon C = 2r$. From \eqref{eq:E2a} we have $8 \phi(C)  \geq  8r = C^2 + 6 \geq \phi(C)^2+6$, which yields $\phi(C) \leq 7$, whence $r \leq 7$ by \eqref{eq:E2a}. By \cite[Proposition1.4]{kl1}, we get $r=\phi(C) \leq 6$.
\end{proof}

\begin{lemma} 
\label{lemma:exc2}
Let $N$ be a nef line bundle on an Enriques surface $S$ with $(N^2, \phi(N))=(42,6)$ or $(34,5)$. 
Then there is an effective divisor $B$ on $S$ satisfying $B^2=2$ and $B.N=2\phi(N)$.
\end{lemma}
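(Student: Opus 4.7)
My plan is to construct $B$ as the sum of two primitive isotropic classes: $B = E_1 + E_2$ with $E_1^2 = E_2^2 = 0$, $E_1.E_2 = 1$ and $E_i.N = \phi(N)$ for $i=1,2$. Then $B^2 = 2E_1.E_2 = 2$ and $B.N = E_1.N + E_2.N = 2\phi(N)$, giving the claim.

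The first half is automatic: by definition of $\phi$, there exists a primitive nef isotropic divisor $E_1 > 0$ with $E_1.N = \phi(N)$. The real work is producing a second halfpencil $E_2 > 0$, primitive isotropic, with $E_1.E_2 = 1$ and $E_2.N = \phi(N)$. For this I would analyze the residual divisor
\[
M := N - c E_1, \qquad \text{with } c = 3 \text{ in case } (N^2,\phi(N))=(42,6), \quad c = 2 \text{ in case }(34,5),
\]
which has $(M^2, M.E_1) = (6,6)$ in the first case and $(14, 5)$ in the second. In both cases $M$ has positive self-intersection and positive intersection with $E_1$, and I would argue that Hodge index combined with a decomposition result for divisors of small $\phi$ on Enriques surfaces (in the style of \cite[Lemma2.4]{kl1}) forces the existence of a primitive isotropic $E_2$ satisfying $E_2.N = \phi(N)$ and $E_1.E_2 = 1$. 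Concretely, any isotropic $F$ nef with $F.E_1 \geq 1$ satisfies $F.N = F.M + c F.E_1 \geq \phi(N)$; pinning $E_2$ down amounts to showing one can achieve equality with $E_2.E_1 = 1$.

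The main obstacle is precisely this existence step: ruling out configurations in which every isotropic class $F$ achieving $F.N = \phi(N)$ either coincides with $E_1$ or has $F.E_1 \geq 2$. Since $\phi(N) = 6$ is the maximum allowed by $\sqrt{42}$ and $\phi(N) = 5$ is the maximum allowed by $\sqrt{34}$, the admissible decompositions of $N$ in terms of elliptic halfpencils are very rigid; I expect to list the few possible numerical types for $(N, E_1)$ via the Hodge index theorem on the sublattice spanned by $N$, $E_1$ and candidate $E_2$, and then verify directly in each type that the required $E_2$ exists on $S$. Once $E_2$ is in hand, setting $B = E_1 + E_2$ concludes the proof.
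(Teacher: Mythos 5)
Your overall target is the same as the paper's: write $B=E_1+E_2$ with $E_1,E_2>0$ isotropic, $E_1.E_2=1$ and $E_i.N=\phi(N)$. But the proposal stops exactly where the proof has to start. You yourself identify ``the main obstacle is precisely this existence step'' and then only say that you ``would argue'' and ``expect to list'' the possible configurations; no configuration is actually listed or excluded, so nothing is proved. This is a genuine gap, not a presentational one: the whole content of the lemma is the existence of the second isotropic class, and the Hodge-index bookkeeping you gesture at has to be carried out explicitly.

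Moreover, the specific residual divisor you propose to analyze does not lead where you want. In the case $(N^2,\phi(N))=(42,6)$ your $M=N-3E_1$ has $M^2=6$, so $\phi(M)\leq 2$; but the class $E_2$ you are after must satisfy $E_2.M=E_2.N-3E_2.E_1=6-3=3$, so it is \emph{not} the divisor computing $\phi(M)$. Worse, any isotropic $F$ with $F.M=\phi(M)\leq 2$ is forced to have $F.E_1\geq 2$ (if $F.E_1=0$ then $F\eqv kE_1$ and $F.M=6k\geq 6$; if $F.E_1=1$ then $F.N\leq 2+3=5<\phi(N)$), hence $F.N\geq 7$ and $F$ is useless. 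The same happens for $M=N-2E_1$ in the case $(34,5)$. So the natural output of your construction is exactly one of the ``bad'' configurations you say you need to rule out. The paper avoids this by subtracting only \emph{one} copy of the first isotropic divisor: it sets $N_1=N-E$ (so $N_1^2=30$, resp.\ $24$), takes $E_1$ with $E_1.N_1=\phi(N_1)$, and shows by a short case analysis (using $N_1-3E_1>0$, \cite[Lemma2.4]{kl1} and \cite[Lemma2.1]{klvan}) first that $\phi(N_1)=5$ (resp.\ $4$) and then that $E.E_1=1$, whence $E_1.N=\phi(N_1)+1=\phi(N)$ and $B=E+E_1$ works. If you want to salvage your approach, replace $N-cE_1$ by $N-E_1$ and supply that case analysis.
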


\begin{proof}
Choose an $E>0$ such that $E^2=0$ and $E.N=\phi(N)$. Set $N_1=N-E$, which is effective by \cite[Lemma2.4]{kl1}, and choose an $E_1 >0$ such that $E_1^2=0$ and $E_1.N_1=\phi(N_1)$. 

We first treat the case $(N^2, \phi(N))=(42,6)$. Then $N_1^2=30$, whence $E_1.N_1=\phi(N_1) \leq 5$.

If $\phi(N_1) \leq 4$, then $(N_1-3E_1)^2 \geq 6$ and $N_1-3E_1 > 0$ by \cite[Lemma2.4]{kl1}. Now $6=\phi(N) \leq E_1.N =E_1.N_1+E_1.E \leq 4 + E_1.E$ implies $E_1.E \geq 2$, whence $6=E.N=E.N_1 = 3E.E_1 + E.(N_1-3E_1) \geq 7$, a contradiction.
Therefore $\phi(N_1)=5$, so that $(N_1-3E_1)^2 =0$. If $E.E_1 \geq 2$ then $6=E.N=E.N_1 = 3E.E_1 + E.(N_1-3E_1)$
implies $N_1-3E_1 \eqv kE$ for some $k \geq 1$ by \cite[Lemma2.1]{klvan}. Now $5=E_1.(N_1-3E_1)=5 kE.E_1 \geq 10$ gives a contradiction. Hence $E.E_1=1$, so that $E_1.N=E.N=6$ and $E+E_1$ is the desired divisor.
The case $(N^2, \phi(N))=(34,5)$ follows in the same manner.
\end{proof}

We now choose $B$ as in Lemma \ref{lemma:exc2} in the cases $r=5$ and $6$. If there is a divisor satisfying $B^2=2$ and $B.L=2r=8$ in the case $r=4$, we pick such a divisor in that case as well, if not we choose an elliptic pencil $|2E|$ with $E.L=4$ and set $B=2E$ in this case. To summarize, we fix an effective  divisor $B$ from now on with the following properties:
\begin{eqnarray} 
\label{eq:exc3} B.L & = & 2\phi(L)=2r \; \mbox{and either} \\
\nonumber & {\rm (i)} & B^2=2 \; \mbox{or} \\
\nonumber   & {\rm (ii)} & (L^2,r)=(26,4), \; B \; \mbox{is an elliptic pencil and there is no} \\
\nonumber & & B_0 >0 \; \mbox{satisfying} \; B_0^2=2 \; \mbox{and} \; B_0.L=8.
\end{eqnarray}

By Lemma \ref{useful2}, we must have $h^0(B) = h^0(\O_C(B))=2$ and $|B_{|C}|$ is a base-point free pencil, whence
\begin{equation} 
\label{eq:exch1=0}
h^1(B)=0 \; \mbox{if} \; B^2=2.
\end{equation}

By \cite[Lemma3.1]{elms} there is an effective divisor $Z$ on $C$ of degree $2r-3$ such that $\O_C(B) \sim A - Z$. Since $\omega_C \sim 2A$, by Lemma \ref{useful2}, setting $D=L+K_S-B$, this yields
\[ A \sim \O_C(D) - Z. \]
From the cohomology of
\begin{equation*} 
0 \hpil K_S-B \hpil \I_Z(D) \hpil A \hpil 0
\end{equation*}
we find
\begin{equation*} 
H^0(\I_Z(D))  \sub H^0(A), 
\end{equation*}
with strict inclusion implying $\codim_{H^0(A)} H^0(\I_Z(D)) =1$ and $B^2=0$ by \eqref{eq:exch1=0}.

To simplify our treatment we will refer to the two different cases as (I) and (II), that is:
\begin{eqnarray} 
\label{eq:exc6} \mbox{Case (I):} & H^0(\I_Z(D)) = H^0(A), \\
 \nonumber      \mbox{Case (II):} & 
            \codim_{H^0(A)} H^0(\I_Z(D)) =1, \; \mbox{in which case} \; 
B^2=0.
\end{eqnarray}

Now let $F$ be the base-component of $|\I_Z(D)|, M = D - F$ and $Z_0 = Z \cap F$ be the scheme-theoretic intersection. Then it follows that there is an effective decomposition $Z=Z_M +Z_0$, as divisors on $C$, such that
\begin{equation} 
\label{eq:exc7'} 
|\I_Z(D)| = |\I_{Z_M} \* M| + F, \ M \; \mbox{is base-component free} 
\end{equation} 
and
\begin{equation} 
\label{eq:exc7} 
\O_C(M)-Z_M \sim A - ( F \cap C - Z_0).
\end{equation}  
From  
\begin{equation} 
\label{eq:exc8}
0 \hpil M-L \hpil \I_{Z_M} \* M \hpil \O_C(M)-Z_M \hpil 0
\end{equation}
we get
\begin{equation} 
\label{eq:exc9}
h^0(\O_C(M)-Z_M) \geq h^0(\I_{Z_M} \* M)=h^0(\I_{Z}(D)) = 
 \left\{ \begin{array}{ll}
          r+1 & \mbox{in Case (I)}, \\
          r   & \mbox{in Case (II).}
        \end{array}
\right. 
\end{equation}
Moreover, $h^1(\O_C(M)-Z_M) \geq h^2(M-L)=h^0(L-M+K_S) \geq h^0(B)=2$,
whence $\O_C(M)-Z_M$ contributes to the Clifford index of $C$.

If $h^0(\O_C(M)-Z_M) \geq r+1$ we therefore have, by \eqref{eq:exc7} and Lemma \ref{useful2},
\begin{eqnarray*}
  \Cliff C  & \leq & \Cliff \O_C(M)(-Z_M) \leq \deg(\O_C(M)-Z_M)-2r  = \\
                     & = & \deg A -2r - \deg (F \cap C - Z_0) = \Cliff C - 
 \deg (F \cap C - Z_0),
\end{eqnarray*}
whence $Z_0 = F \cap C$ and $\O_C(M)-Z_M  \sim A$. 

If $h^0(\O_C(M)-Z_M) =r$, then, since $r$ is the Clifford dimension of $C$, we have
\[ \Cliff C  <  \Cliff \O_C(M)(-Z_M) = \Cliff C +2 - \deg (F \cap C - Z_0), \]
whence $\deg (F \cap C - Z_0) \leq 1$. However, as $h^0(\O_C(M)-Z_M) < h^0(A)$,
we must have $\deg (F \cap C - Z_0) =1$.
We therefore conclude that
\begin{eqnarray} 
\label{eq:exc10}   \mbox{Either} &  & \O_C(M)-Z_M  \sim A, \; \mbox{or} \\
\nonumber & & \O_C(M)-Z_M \sim A-x, \; \mbox{for some} \; x \in C, \; \mbox{in which case}  \\
\nonumber   & & h^0(\O_C(M)-Z_M) = h^0(\I_{Z_M} \* M) = r \;  \mbox{and we are in Case (II).}
 \end{eqnarray}
Now take cohomology of \eqref{eq:exc8} and set
\[ V = \Im \Big\{ H^0(\I_{Z_M} \* M) \hpil H^0(\O_C(M)-Z_M) \Big \}. \]
Clearly
\begin{equation} 
\label{eq:exc11}
\dim V= h^0(\I_{Z_M} \* M)=  \left\{ \begin{array}{ll}
          r+1 & \mbox{in Case (I)}, \\
          r   & \mbox{in Case (II).}
        \end{array}
\right. 
\end{equation}
The evaluation map $ev_{(\I_{Z_M} \* M)} : H^0(\I_{Z_M} \* M) \* \O_S \khpil \I_{Z_M} \* M$ is surjective off a finite set since $|\I_{Z_M} \* M|$ is without fixed components and its kernel is a vector bundle whose dual we denote by $\F$, while its cokernel is a torsion sheaf with finite support that will be
denoted by $\tau_M$. Similarly, the evaluation map $ev_V: V \* \O_S \hpil \O_C(M)-Z_M$ is surjective off a finite set and its kernel is a vector bundle whose dual we denote by $\E_0$, while its cokernel is a torsion sheaf with finite support that will be
denoted by $\tau_V$. Set $l_V =\length(\tau_V)$ and $l_M = \length(\tau_M)$. Note that 
\begin{equation} 
\label{eq:nuova}
\mbox{if} \ l_V = 0 \ \mbox{then} \ h^0(\E_0(K_S)) = h^2(\E_0^*) = h^1(\O_C(M)(-Z_M)).
\end{equation}
Taking evaluation maps in \eqref{eq:exc8}, applying the snake lemma and dualizing yields 
\begin{equation} 
\label{eq:exc14} 
0 \hpil \O_S(L-M) \hpil \E_0 \hpil \F   \hpil \tau \hpil 0, 
\end{equation}  
where $\tau$ is a torsion sheaf of finite support of length $l:=l_M-l_V \geq 0$.

Now note that we have
$c_1(\E_0) \sim L \; \mbox{and} \; c_1(\F) \sim M$
and, from \eqref{eq:exc11} and \eqref{eq:exc14}, 
\begin{equation} 
\label{eq:exc16}
 \rk \E_0= \dim V= \left\{ \begin{array}{ll}
          r+1 & \mbox{in Case (I),}  \\
          r   & \mbox{in Case (II),}
        \end{array}
\right. \; \mbox{and} \; \rk \F =\rk \E_0-1
\end{equation} 
(note that it follows that $\E_0 \iso \E(C,A)$ in case (I))  and
\begin{equation}
\label{eq:exc17}
c_2(\E_0) = \deg (\O_C(M)-Z_M) -l_V.
\end{equation} 
Moreover we have
\begin{equation} 
\label{eq:extra}
h^0(\E_0^*)=h^0(\F^*)=0 \; \mbox{and} \; h^1(\E_0^*) \leq h^0(\O_C(M)(-Z_M))-h^0(\I_{Z_M} \* M).
\end{equation} 
Taking $c_2$ in \eqref{eq:exc14} and combining with \eqref{eq:exc10} and \eqref{eq:exc17} we obtain
\begin{equation}
\label{eq:exc18}
4r-3 \geq \deg (\O_C(M)-Z_M)= M.(L-M) + c_2(\F) +l_M.
\end{equation} 
We also note that by dualizing the evaluation sequence of $\I_{Z_M} \* M$, that is the middle column of the commutative diagram above, we see that,
\begin{equation} 
\label{eq:exc19}
\F \; \mbox{is globally generated off a finite set and, if} \; l_M=0 \; \mbox{and} \; M^2 >0, \; \mbox{then} \; \F \; 
\mbox{is good.}
\end{equation} 
(Indeed, the latter is an immediate consequence of Lemma \ref{lemma:good}, since $Z_M \subset C$ is curvilinear.)

Now if $M^2=0$, then $M \sim mP_0$ for an elliptic pencil $|P_0|$ and $m \geq 1$ and $c_2(\F) \geq 2\rk \F-2m \geq 2(r-1-m)$ by \cite[Prop.3.2(b)]{kn2} and \eqref{eq:exc16}, whence inserting into  \eqref{eq:exc18} we get
$4r-3 \geq mP_0.L + 2(r-1-m) \geq 2(mr+r-1-m)$.
Therefore $m=1$ and $h^0(M)=2$. But from \eqref{eq:exc9} we must have $h^0(M) \geq h^0(\I_{Z_M} \* M) \geq r \geq 4$, a contradiction. Therefore $M^2 >0$, and again from \eqref{eq:exc9} together with the fact that $h^1(M)=0$ as $M$ is base-component free, we must have 
\begin{eqnarray} 
\label{eq:exc22}  M^2 & \geq & 2r-2 \; \mbox{with equality only in case (II) with} \\
\nonumber & & h^0(M) = h^0(\I_{Z_M} \* M)=r. 
\end{eqnarray}

Now note that $(L-M).L \geq (L-D).L =B.L=2r$, and since $L^2=8r-6$, also
\begin{equation} 
\label{eq:exc30}
L.M \leq 6r-6.
\end{equation} 

Next we claim that
\begin{equation} 
\label{eq:exc31}
\phi(M) \geq 2.
\end{equation} 
Indeed, if $\phi(M)=1$, one easily sees, by \cite[Lemma2.4]{kl1} that one can write $M \sim kE_1+E_2$ for $E_i >0$, $E_i^2=0$ and $E_1.E_2=1$ and $k:= \frac{1}{2}M^2 \geq r-1$ by \eqref{eq:exc22}. Therefore $M.L \geq (k+1)\phi(L) \geq r^2$ and combining with \eqref{eq:exc30} we find that $r=4$, and again by \eqref{eq:exc22} we find $M^2=6$ and $h^0(M)=h^0(\I_{Z_M} \* M)=4$. Therefore $Z_M$ is contained in the base locus of $|M|$, so that $\deg Z_M \leq 2$, whence $M.L=M.C \leq \deg A+2=15$ by \eqref{eq:exc10}, a contradiction.

Therefore \eqref{eq:exc31} holds. We now claim that we have
\begin{equation} 
\label{eq:exc23}
M.(L-M) \geq 2r-2 \; \mbox{and} \; M.(L-M) \geq 2r \; \mbox{if} \; h^1(M-L) \geq 1 \; \mbox{or} \; (L-M)^2 \leq 0.
\end{equation} 

Indeed, if $(L-M)^2 \leq 0$ this simply follows since $(L-M).L \geq (L-D).L=B.L=2r$. For the rest we note that $L+K_S \sim M + (L-M+K_S)$ is an effective decomposition with $h^0(L-M+K_S) \geq h^0(B) = 2$, so that
$M.(L-M) \geq \Cliff C +2h^1(L-M+K_S) = 2r-3 +2h^1(L-M+K_S)$ by Lemma \ref{cliffhanger}. 

If $M.(L-M) =2r-3$, then $h^1(L-M+K_S)=0$ and both $\O_C(M)$ and $\O_C(L-M+K_S)$ compute the Clifford index of $C$ by Lemma \ref{cliffhanger} and therefore $h^0(\O_C(M))=h^0(M) \geq r+1$ and $h^0(\O_C(L-M+K_S))=h^0(L-M+K_S) \geq r+1$,
whence $M^2 \geq 2r$ and $(L-M)^2 \geq 2r$, but this is easily seen to contradict the Hodge index theorem.

If $h^1(L-M+K_S) >0$ and $M.(L-M) \leq 2r-1$, then we must have
$h^1(L-M+K_S) =1$ and $M.(L-M) =2r-1$, so that $h^0(\O_C(M)) \leq h^0(M)+1$.
Moreover $h^0(\O_C(L-M+K_S))=h^0(L-M+K_S)$ as $h^1(M)=0$.
Again by Lemma \ref{cliffhanger}, both $\O_C(M)$ and $\O_C(L-M+K_S)$ compute the Clifford index of $C$, 
whence $M^2 \geq 2r-2$ and $(L-M)^2 \geq 2r-2$. Now
\[ 8r-6 = L^2 = M^2 + (L-M)^2 +2M.(L-M) \geq 8r-6 \]
implies that $M^2 = 2r-2$, so that we must be in Case (II) by \eqref{eq:exc22} and $r = \phi(L) = 4$. Therefore $M^2 = 6$ and $M.L= 13$. By \eqref{eq:exc31}, $\phi(M) = 2$ and again by \cite[Lemma2.4]{kl1}, we can write $M \sim F_1+F_2+F_3$ for $F_i >0$, $F_i^2=0$ and $F_i.F_j=1$ if $i \neq j$. But now $M.L=13$ implies that at least two of the $F_i$'s satisfy $F_i.L=4$, contradicting \eqref{eq:exc3}.
This settles \eqref{eq:exc23}.

\subsection{Case (I)}
Recall that $\O_C(M)-Z_M = A$ by \eqref{eq:exc10} whence $\deg (\O_C(M)-Z_M)=4r-3$ by Lemma \ref{useful2}.
By \eqref{eq:exc16} we have $\rk \F =r$. Setting $c(\F) = c_2(\F)-2(\rk \F-1)=c_2(\F)-2(r-1) \geq 0$ by Proposition \ref{prop:bassoc}, we may rewrite \eqref{eq:exc18} as 
\begin{equation*}
 M.(L-M) + l_M + c(\F) =2r-1
\end{equation*}
Comparing with \eqref{eq:exc23} we see that $(L-M)^2 >0$ and that the only possibilities are
\[ ( M.(L-M), l_M, c(\F)) = (2r-2,1,0), (2r-2,0,1), (2r-1,0,0). \]

In the two cases with $c(\F)=0$ we must have $M^2 \leq 8$ by Proposition \ref{prop:bassoc}(i) and \eqref{eq:exc31}.
Since $M^2 \geq 2r \geq 8$ by \eqref{eq:exc22}, we must have $r=4$ and $M^2=8$, whence  $M \eqv 2M_0$ with $M_0^2=2$ by Proposition \ref{prop:bassoc}(i). Therefore $M.(L-M)$ is even, so that we must be in the case with $M.(L-M)=6$ and $l_M=1$. But $L^2=26$ implies $(L-M)^2=6$ and the Hodge index theorem yields the contradiction $48= M^2 (L-M)^2 \leq 36$.

Therefore, the only case remaining is the one with $M.(L-M)=2r-2$, $l_M=0$ and $c(\F)=1$. By \eqref{eq:exc19} $\F$ is good. It is also clear that we must have $l=l_V=0$, whence twisting \eqref{eq:exc14} by $K_S$ and using \eqref{eq:exc10}, \eqref{eq:nuova} and that $h^1(\E_0(K_S))=0$ by \eqref{eq:extra}, we find
\[ h^0(\F(K_S))=h^0(\E_0(K_S)) - \chi(L-M) = h^1(A)- \chi(L-M) = r- \frac{1}{2}(L-M)^2. \]  
Therefore, if $h^0(\F(K_S)) \leq 1$, we have $(L-M)^2 \geq 2r-2$, whence the contradiction
\[ 4r(r-1) \leq M^2 (L-M)^2 \leq \Big( M.(L-M) \Big)^2 = 4(r-1)^2 \]
by \eqref{eq:exc22} and the Hodge index theorem.
Hence $h^0(\F(K_S)) \geq 2$ and we are in one of the four cases in Proposition \ref{prop:bassoc}(ii). Recall that $r \leq 6$ by Lemma \ref{useful2}.

If we are in case (ii-a) then $M.L \geq 5\phi(L)=5r$ and \eqref{eq:exc30} yields $r=6$ and $M.L=30$.
But $M.L=M^2+M.(L-M)=22$, a contradiction.

If we are in one of the cases (ii-b) or (ii-c) then $M^2=10$, whence $(L-M)^2=4(r-3)$, contradicting and the Hodge index theorem, as $r \leq 6$.

If we are in case (ii-d) then $M^2 \leq 8$. Comparing with \eqref{eq:exc22} we get $r=4$ and $M^2=8$, so that $M.(L-M)=6$. From $L^2=26=M^2 + (L-M)^2 +2M.(L-M) = 20 + (L-M)^2$ we find $(L-M)^2=6$ and the Hodge index theorem yields the same contradiction as above.

\subsection{Case (II)}

By \eqref{eq:exc6} and \eqref{eq:exc3} we have $r=4$ and by \eqref{eq:exc16} we have $\rk \F =3$. Setting $c(\F) = c_2(\F)-2(\rk \F-1)=c_2(\F)-4\geq 0$ as in Proposition \ref{prop:bassoc}, we rewrite \eqref{eq:exc18} as 
\begin{equation} 
\label{eq:exc25}
M.(L-M) + l_M + c(\F) = \deg (\O_C(M)(-Z_M))-4.
\end{equation} 

We now divide the treatment into the two cases occurring in \eqref{eq:exc10}.

\subsubsection{The case $\O_C(M)(-Z_M) \sim A$} 

In this case \eqref{eq:exc25} reads 
\begin{equation} 
\label{eq:exc26}
M.(L-M) + l_M + c(\F) = 9.
\end{equation} 
Moreover, from \eqref{eq:exc6} and \eqref{eq:exc8} we must have $h^1(M-L) \geq 1$. Therefore \eqref{eq:exc23} yields $M.(L-M) \geq 8$. Comparing with \eqref{eq:exc26}  we see that the only options are
\[ ( M.(L-M), l_M, c(\F)) = (8,1,0), (8,0,1), (9,0,0). \]
In the two cases with $c(\F)=0$ we must have $M^2 \leq 8$ by Proposition \ref{prop:bassoc}(i) and \eqref{eq:exc31}. Since $M^2 \geq 6$ by \eqref{eq:exc22}, we must have $M^2=6$ or $8$. If $M^2=8$ then  $M \eqv 2M_0$ with $M_0^2=2$ by Proposition \ref{prop:bassoc}(i). Therefore $M.(L-M)$ is even, so that we must be in the case with $M.(L-M)=8$ and $l_M=1$. Now $L^2=26=M^2 + (L-M)^2 +2M.(L-M) = 24 + (L-M)^2$ implies
$(L-M)^2=2$. Since $h^1(L-M+K_S) \neq 0$ there is a $\Delta >0$ with $\Delta^2 =-2$ and $\Delta.(L-M) \leq -2$, by \cite[Cor.2.5]{klvan}. Obviously $\Delta.M \geq 2$, whence $(M+\Delta)^2 \geq 10$, $(L-M-\Delta)^2 \geq 4$ and $(M+\Delta).(L-M-\Delta) \leq 6$, which is easily seen to contradict the Hodge index theorem. If $M^2=6$ we have $h^0(M)=h^0(\I_{Z_M} \* M)$ by \eqref{eq:exc22}, whence $Z_M=\emptyset$ since $M$ is base-point free by \eqref{eq:exc31}. It follows that $\O_C(M) \sim A$, so that $M.L= \deg A=13$, whence $M.(L-M)=7$, a contradiction. 

Therefore, the only case remaining is the one with $M.(L-M)=8$, $l_M=0$ and $c(\F)=1$. By \eqref{eq:exc19} $\F$ is good. Also we must have $l=l_V=0$, whence twisting \eqref{eq:exc14} by $K_S$ and using \eqref{eq:exc10}, \eqref{eq:nuova}, $h^1(\F(K_S))=0$ by Proposition \ref{prop:bassoc} and that $h^1(\E_0(K_S)) \leq 1$ by \eqref{eq:extra} we find
\[ h^0(\F(K_S)) \geq h^0(\E_0(K_S)) - \chi(L-M)-1 = h^1(A)- \chi(L-M)-1 = 3- \frac{1}{2}(L-M)^2. \]  
Therefore, if $h^0(\F(K_S)) \leq 1$, we have $(L-M)^2 \geq 4$. Hence by \eqref{eq:exc22} and $L^2=26$ we get $M^2=6$ and, as above, $Z_M=\emptyset$ since $M$ is base-point free and $\O_C(M) \sim A$, so that $M.L= \deg A=13$, whence $M.(L-M)=7$, a contradiction. 

Hence $h^0(\F(K_S)) \geq 2$ and we are in one of the four cases in Proposition \ref{prop:bassoc}(ii).

If we are in case (ii-a) then $M.L \geq 5\phi(L)=20$ which contradicts \eqref{eq:exc30}.

We cannot be in case (ii-b), as $\rk \F=3$.

If we are in case (ii-c) then $M^2=10$ and $\phi(M)=2$. Picking any $E_1>0$ with $E_1^2=0$ and 
$E_1.M=2$ with $E_1$ nef, one easily sees that one can write $M \sim 2E_1+E_2+E_3$, with 
$E_i>0$, $E_i^2=0$ and $E_i.E_j=1$ for $i \neq j$. Since $M.L=18$ and $E_i.L \geq 5$ for at least two of the $E_i$'s, by \eqref{eq:exc3}, we must have $E_1.L=4$ and $E_2.L=E_3.L=5$. Therefore $E_i.(L-M)=2$ for all $i=1,2,3$. Now $(L-M)^2=0$ and we have that (see \eqref{eq:exc7'})
$L-M \sim L-D+F \sim B+K_S+F$, where $B \sim 2E$ is an elliptic pencil with $E.L=4$. Since $E_1$ is nef, we can only have $(E_1.E,E_1.F)=(0,2)$ or $(1,0)$. The latter contradicts  \eqref{eq:exc3}, since then we would have $(E+E_1)^2=2$ and $(E+E_1).L=8$. Hence we must be in the first case with $E_1 \eqv E$. From $8 = L.(L-M)=L.(2E_1+F) = 8 + L.F$ it follows that $L.F = 0$ and $F^2 < 0$. Now there has to be a nodal curve $R < F$ such that $R.E_1 \geq 1$. Then $(2E_1+R)^2 \geq 2$ and $(2E_1+R).L=8$. The Hodge index theorem yields $(2E_1+R)^2=2$, but this again contradicts 
\eqref{eq:exc3}.

If we are in case (ii-d) then $M^2 \leq 8$. Now $M^2=6$ gives, by \eqref{eq:exc22}, the same contradiction as above, whence $M^2=8$, and consequently $(L-M)^2=2$. But this is the same case treated above, where we derived a contradiction from the fact that $h^1(L-M+K_S) \neq 0$.

\subsubsection{The case $\O_C(M)(-Z_M) \sim A-x$} 

In this case \eqref{eq:exc25} reads 
\begin{equation} 
\label{eq:exc28}
M.(L-M) + l_M + c(\F) = 8.
\end{equation} 

We now claim that 
\begin{equation} 
\label{eq:exc29}
M^2 \geq 8 \; \mbox{and} \; M.(L-M) \geq 7.
\end{equation} 

Indeed, we have $M^2 \geq 6$ by \eqref{eq:exc22}. If  $M^2 = 6$ then $h^0(M)=h^0(\I_{Z_M} \* M)$ which means that $Z_M=\emptyset$, since $M$ is base-point free by \eqref{eq:exc31}. Hence $M.L=\deg A-1=12$, whence $M.(L-M)=6$ and $(L-M)^2=8$, which is incompatible with the Hodge index theorem. Therefore $M^2 \geq 8$. Moreover, by \eqref{eq:exc23} we have $M.(L-M) \geq 6$, and equality implies $(L-M)^2 > 0$, and since $M^2+(L-M)^2=14$, we get the three possibilities $(M^2,(L-M)^2)=(12,2),(10,4),(8,6)$. The last two cases are easily ruled out using the Hodge index theorem. In the first case we get $(L-M).L=8=2\phi(L)$, contradicting \eqref{eq:exc3}. 

Therefore we have shown \eqref{eq:exc29}.  
By \eqref{eq:exc28} and \eqref{eq:exc29}  we see that the only options are
\[ ( M.(L-M), l_M, c(\F)) = (7,1,0), (7,0,1), (8,0,0). \]
In the two cases with $c(\F)=0$ we must have $M \eqv 2M_0$ with $M_0^2=2$ by Proposition \ref{prop:bassoc}(i), \eqref{eq:exc31} and \eqref{eq:exc29}. Therefore $M.(L-M)$ is even, so that we must be in the case with $M.(L-M)=8$. But then $M_0.L = 8=2\phi(L)$, contradicting \eqref{eq:exc3}. 

Therefore, the only case remaining is the one with $M.(L-M)=7$, $l_M=0$ and $c(\F)=1$. By \eqref{eq:exc19} $\F$ is good. Since $M^2 \geq 8$ by \eqref{eq:exc29} we must have $(L-M)^2 \leq 4$ by the Hodge index theorem. It is also clear that we must have $l=l_V=0$, whence twisting \eqref{eq:exc14} by $K_S$ and using \eqref{eq:exc10}, \eqref{eq:nuova} and that $h^1(\E_0(K_S))=0$ by \eqref{eq:extra} we find
\[ h^0(\F(K_S)) = h^0(\E_0(K_S)) - \chi(L-M) = h^1(A-x)- \chi(L-M) = 4- \frac{1}{2}(L-M)^2 \geq 2. \]  
Therefore we are in one of the four cases in Proposition \ref{prop:bassoc}(ii).

If we are in case (ii-a) then $M.L \geq 5\phi(L)=20$ which contradicts \eqref{eq:exc30}.

We cannot be in case (ii-b), as $\rk \F=3$.

If we are in case (ii-c) then $M^2=10$ and $\phi(M)=2$. As above we can write $M \sim 2E_1+E_2+E_3$, with $E_i>0$, $E_i^2=0$ and $E_i.E_j=1$ for $i \neq j$. Since $M.L=17$ we must have $E_i.L =4$ for at least two of the $E_i$'s, contradicting \eqref{eq:exc3}.

If we are in case (ii-d) then $M^2 \leq 8$, whence \eqref{eq:exc29} implies $M^2=8$ and \eqref{eq:exc31} implies that $\phi(M)=2$. Also $(L-M)^2=4, (L-M).L= 11$ and we claim that $h^1(L-M+K_S)=0$. Indeed, if not, by \cite[Cor.2.5]{klvan}, there exists a $\Delta > 0$ such that $\Delta^2 = - 2$ and $k:= - \Delta.(L-M) \geq 2$. Also $\Delta.M \geq 2 + \Delta.L \geq 2$. By \cite[Lemma2.3]{kl1} there exists $G>0$ such that $G^2 = 4$ and $L-M \sim G + k\Delta$. Now $11 = (L-M).L= L.G + k L.\Delta \geq L.G$ implies $\phi(G) = 2$, for, otherwise, we can write $G = 2E' + E''$ with $E' > 0, E'' > 0, (E')^2 = (E'')^2 = 0$, giving the contradiction $11 \geq L.G \geq 3 \phi(L) = 12$. Hence, as $\phi(G) = 2$, we find the contradiction $7 = M.(L-M) = M.G + k M.\Delta \geq 2 \phi(G) + 4 = 8$. Therefore $h^1(L-M+K_S)=0$ and, by \eqref{eq:exc8}, we have $h^0(\I_{Z_M} \* M)=h^0(A-x)=4$. Now $\deg (A-x)=12$, $M.L=15$ and $h^0(M)=5$, so that $Z_M$ is a scheme of length three imposing only one condition on $|M|$. Since $|M|$ is base-point free, any subscheme of length one of $Z_M$ poses one condition. Pick a subscheme $X \subset Z_M$ of length two. By the Reider-like results in \cite[Prop.3.7]{kn1} there is a $\Delta >0$ such that $X \sub \Delta$ and
$2\Delta^2 \leq M.\Delta \leq \Delta^2 +2 \leq 4$.
Also note that, since $l_M=0$, we have that $Z_M = \Bs|\I_{Z_M} \* M|$. As $Z_M$ is curvilinear of length $3$, as in the proof of Proposition \ref{prop:bassoc}, the general divisor $M_0 \in |\I_{Z_M} \* M|$ is irreducible. 

Now $\Delta^2 \leq -2$ implies $M.\Delta=0$, a contradiction, since $X$ is contained in an irreducible member $M_0$ of $|M|$. Moreover $\Delta^2=2$ implies $M \eqv 2\Delta$ by the Hodge index theorem, which is impossible since $M.(L-M)=7$. Therefore $\Delta^2=0$ and $\Delta.M=2$. It follows that $\Delta \cap M_0 = X$ and therefore $Z_M \not\subset \Delta$. By \cite[Thm.3.2.1]{cd} we can write $\Delta = \Gamma + \Delta'$ with $\Gamma \geq 0$ and $\Delta'$ of canonical type, in particular nef. But then, as $\phi(M)=2$, we must have $\Gamma.M = 0, \Delta'.M=2$, $\Delta'$ is primitive, $\Gamma \cap M_0 = \emptyset$ and again $\Delta' \cap M_0 = X$ and $Z_M \not\subset \Delta'$. 

Set $N = M - \Delta'$ and $y = Z_M - X$ (as divisors on $C$). Since any member of $|M|$ passing through a point in $\Supp Z_M$ contains the whole $Z_M$ and $Z_M \not\subset \Delta'$, we have that $y$ is a base-point of $|N|$. Now let $|N| = |N_0| + F'$ be the decomposition into the moving and fixed part. We claim that $N_0^2 = 4$ and $\phi(N_0) = 2$. In fact $N^2 = 4$ gives $h^0(N_0) = h^0(N) \geq 3$ whence either $N_0^2 = 0$ and $N_0 = kP_0$ for some $k \geq 2$ and some elliptic pencil $P_0$ or $N_0^2 > 0$. Since $N_0.L = N.L - F'.L \leq M.L - \Delta'.L \leq 15 - \phi(L) = 11 < 3 \phi(L) = 12$, the first case cannot hold, therefore $N_0^2 > 0, h^1(N_0) = 0$ and Riemann-Roch implies that $N_0^2 \geq 4$. But $0 \leq M.N_0 \leq M.(M - \Delta') \leq 8 - \phi(M) = 6$ and the Hodge index theorem yields $N_0^2 = 4$. Also $N_0.L < 3 \phi(L)$ gives $\phi(N_0) = 2$. Therefore $|N_0|$ is base-point free and $y \in \Supp F'$.

Next note that $M - F' \sim \Delta' + N_0$ is nef and $(\Delta' + N_0)^2 \geq 8, \phi(\Delta' + N_0) \geq 2$ and therefore $|M - F'|$ is base-point free. Pick any $M' \in |M - F'|$ not intersecting $Z_M$. Then $M' + F' \in |M|$ passes through $y$ and therefore it contains the whole $Z_M$. It follows that $Z_M \subset F'$ and $M.F' = M_0.F' \geq 3$. But then we get the contradiction
\[ 8 = M^2 = M.(M-F') + M.F' = M.(\Delta' + N_0) + M.F' \geq 5 + 2 \phi(M) = 9 .\]

This concludes the proof of the case $4 \leq r \leq 9$. 

\section{Curves of higher Clifford dimensions}
\label{sec:r>9}

In this section we will show that there is no exceptional curve of Clifford dimension $r \geq 10$ on an Enriques surface.

Assume to get a contradiction that, on an Enriques surface $S$, there is an exceptional curve $C$ of Clifford index $c$ and Clifford dimension $r \geq 10$ . Set $L = \O_S(C)$ and let $A$ be a line bundle on $C$ computing the Clifford dimension. Then $h^0(A)=r+1$ and, as $A$ computes the Clifford index of $C$, $A$ is base-point free and the vector bundle $\E:=\E(C,A)$ is defined.
Note that by \eqref{eq:E2a} we have $\phi(L) \geq 10$ and $L^2 \geq 100$. We need the following

\begin{lemma}   
\label{usefulr10}
Let $E$ be any effective divisor satisfying $E^2=0$ and $E.L=\phi(L)$. Then $h^0(\E (-2E) ) \geq 4$.
In particular, $\E$ is generated by its global sections off the (possibly zero) base divisor of $|\N_{C/S} -A|$. 
\end{lemma}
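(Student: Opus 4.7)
The plan is to tensor the defining sequence \eqref{eq:eca} of $\E=\E(C,A)$ by $\O_S(-2E)$ and then extract the cohomology. The key preliminary step is the vanishing $h^0(\O_S(-2E))=h^1(\O_S(-2E))=0$. The first is immediate. For the second, I first note that the minimality of $E.L=\phi(L)$ forces $E$ to be primitive in $\Pic S$, so $E$ is a halfpencil and $2E$ is a connected scheme of arithmetic genus one with $h^0(\O_{2E})=1$. Hence the restriction map $H^0(\O_S)\to H^0(\O_{2E})$ is an isomorphism, and combined with $h^1(\O_S)=0$, the long exact cohomology sequence of $0\to\O_S(-2E)\to\O_S\to\O_{2E}\to 0$ gives $h^1(\O_S(-2E))=0$.

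Twisting \eqref{eq:eca} by $\O_S(-2E)$ then yields a canonical isomorphism $H^0(\E(-2E))\cong H^0((\N_{C/S}-A)(-2E|_C))$. By adjunction $\N_{C/S}-A\cong\omega_C-A-K_S|_C$, so the latter group equals $H^0(\omega_C-A-(2E+K_S)|_C)$. Since $\deg((2E+K_S)|_C)=(2E+K_S).L=2\phi(L)$ (using $K_S.L=0$), restricting sections of $\omega_C-A$ along this divisor of $C$ gives
\[ h^0(\E(-2E))\geq h^0(\omega_C-A)-2\phi(L)=h^1(A)-2\phi(L). \]
From $h^0(A)=r+1$ and Riemann-Roch on $C$ one has $h^1(A)=g+r-\deg A$. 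Using $\deg A=2r+\Cliff C\leq 2r+2\phi(L)-3$ (from \eqref{eq:E2b}), $r\leq\phi(L)$ (from \eqref{eq:E2a}), $g=\tfrac12 L^2+1\geq\tfrac12\phi(L)^2+1$ (Cossec's bound $L^2\geq\phi(L)^2$ for nef $L$ on an Enriques surface), and the hypothesis $\phi(L)\geq r\geq 10$, I would conclude
\[ h^0(\E(-2E))\geq g-r-4\phi(L)+3\geq\tfrac12\phi(L)^2-5\phi(L)+4\geq 4. \]
The main obstacle is the tightness of this chain of inequalities, which becomes an equality at $r=\phi(L)=10$ and $L^2=100$, so every bound must be invoked with no slack.

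For the in-particular assertion, regularity $h^1(\O_S)=0$ gives a surjection $H^0(\E)\twoheadrightarrow H^0(\N_{C/S}-A)$, so sections of $\N_{C/S}-A$ lift to $\E$. At any point $p\notin C$, the trivial subsheaf $H^0(A)^\ast\otimes\O_S\hookrightarrow\E$ surjects onto $\E|_p$, so the $r+1$ trivial sections generate $\E$ at $p$. At $p\in C$ outside the base divisor of $|\N_{C/S}-A|$, lifting a section of $\N_{C/S}-A$ non-vanishing at $p$ to $H^0(\E)$ and combining with the trivial sections generates the fiber $\E|_p$.
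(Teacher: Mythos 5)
Your proposal is correct and follows essentially the same route as the paper: twist \eqref{eq:eca} by $\O_S(-2E)$, use $h^1(\O_S(-2E))=0$ to identify $h^0(\E(-2E))$ with $h^0(\omega_C-A-(2E+K_S)|_C)$, bound this below by $h^1(A)-2\phi(L)$, and then combine Riemann--Roch with \eqref{eq:E2a}, \eqref{eq:E2b}, $L^2\geq\phi(L)^2$ and $\phi(L)\geq 10$ to get the value $4$; the global generation statement is extracted from \eqref{eq:eca} exactly as in the paper. The only difference is that you supply explicit justifications (primitivity of $E$ for the vanishing $h^1(\O_S(-2E))=0$, and the fiberwise generation argument) that the paper leaves implicit.
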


\begin{proof}
Since $h^1(\O_S(-2E))=0$, tensoring \eqref{eq:eca} by $\O_S(-2E)$, we see that 
$h^0(\E (-2E) ) \geq h^0((\N_{C/S}-A) (-2E))$. Now 
\[ h^0((\N_{C/S}-A) (-2E)) = h^0((\omega_C-A) (-E-(E+K_S)) \geq h^0(\omega_C-A)-2\phi(L) = h^1(A) -2\phi(L). \] 
From (\ref{eq:E2a}), (\ref{eq:E2b}) and Riemann-Roch we find 
\[ h^1(A)-2\phi(L)  =  g(C)+1-c-h^0(A)-2\phi(L) \geq \frac{1}{2}L^2+ 4 -5\phi(L) \geq \frac{1}{2}\phi(L)^2-5\phi(L)+4 \geq 4. \]
Then $h^0(\E (-2E) ) \geq 4$ and $h^0(\N_{C/S}-A) \geq 4$, that implies the last assertion by \eqref{eq:eca}. 
\end{proof}

By Lemma \ref{usefulr10} there is a nonzero section $s \in H^0(\E)$ vanishing along some member of $|2E|$, so that we can apply Lemma \ref{useful3} and we get a sequence \eqref{eq:E8} with $\F$ locally free of rank $r$, $\tau$ a torsion sheaf supported on a finite set and with $D \geq 2E$. In particular Lemma \ref{bigger0} applies. Set $M = \det \F$. Since $h^1(D+K_S) \leq 1$ by Lemma \ref{bigger0}, it follows from the fact that $M_{|C} \geq A$ 
that $h^0(M) \geq r$, that is
\begin{equation}
\label{eq:E12'}
M^2 \geq 2r-2 \geq 18. 
\end{equation}
Now let $P$ be an elliptic pencil such that $P.M = 2\phi(M)$, then $(M-P)^2 \geq 2$ and $P.(M-P) = 2\phi(M) >0$, whence there is a nontrivial effective decomposition $M \sim P + N$, with $N^2 \geq 2$. We must have
\begin{equation}
\label{eq:E12}
P.(M+D)=P.(N+D) =P.L \geq 2\phi(L) \geq c+3
\end{equation}
by (\ref{eq:E2b}). Moreover we claim that
\begin{equation}
\label{eq:E13}
N.(P+D) \geq c+1.
\end{equation}
Indeed, by Lemma \ref{cliffhanger} we must have $N.(P+D) \geq c$, and if equality occurs then $c = \Cliff (N+K_S)_{|C} = \Cliff (P+D)_{|C}$ and $h^1(N+K_S) = h^1(P+D)=0$. Since $A$ computes the Clifford dimension of $C$ we must have $h^0(N + K_S)=h^0((N + K_S)_{|C}) \geq h^0(A)=r+1$, and similarly for $h^0(P+D)$. Thus both $N^2 \geq 2r$ and $(P+D)^2 \geq 2r$. Letting $E_1 >0$ be any effective divisor such that $E_1^2=0$ and $E_1.N= \phi(N)$ and setting $P_1 =2E_1$ we find as above that there is an effective decomposition 
$N \sim P_1 + N_1$ with $N_1^2 \geq 4$. This yields, by (\ref{eq:E2b}) and Lemma \ref{cliffhanger}, that
\begin{eqnarray*}
    c & = & (P+D).N = P_1.(P+D) + N_1.(P+D) \geq (c+3 -P_1.N_1) +(c -P_1.N_1) \\ 
      & = & 2c+3-2P_1.N_1 =     2c+3-4\phi(N),
\end{eqnarray*}
whence $4\phi(N) \geq c+3$. By the Hodge index theorem
\[ 20 \phi(N)^2 \leq 20N^2 \leq (P+D)^2 N^2 \leq ((P+D).N)^2 =c^2 \leq (4\phi(N)-3)^2 < 16 \phi(N)^2, \]
a contradiction. This proves (\ref{eq:E13}).

\begin{lemma} 
\label{decmov}
The divisor $D$ has no decomposition $D \sim D_1 +D_2$ such that $h^0(D_1) \geq 2$ and $h^0(D_2) \geq 2$. In particular $-2 \leq c_2 (\F) - 2\rk \F \leq -1$.
\end{lemma}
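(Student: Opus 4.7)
The plan is to argue by contradiction: assume a decomposition $D \sim D_1 + D_2$ with $h^0(D_i) \geq 2$, and derive a contradiction via the Hodge index theorem. Since $M^2 > 0$ by Lemma \ref{bigger0}, we have $h^0(M+K_S) \geq 2$, so Lemma \ref{cliffhanger} applies to each of the decompositions $L + K_S \sim D_i + (D_{3-i} + M + K_S)$ (both summands have $h^0 \geq 2$), giving $c \leq D_i.(D_{3-i} + M)$ for $i = 1, 2$. Summing these and using $D.M \leq c + 2$ from Lemma \ref{bigger0} yields $D_1.D_2 \geq (c-2)/2$.

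Next, the hypothesis $h^0(D_i) \geq 2$ forces $D_i.L \geq 2\phi(L)$ (by considering the moving part of $D_i$, which is a nonzero nef divisor with $h^0 \geq 2$), so $D.L \geq 4\phi(L)$. Combined with $D.L = D^2 + D.M$ and $c \leq 2\phi(L) - 3$ from \eqref{eq:E2b}, this gives $D^2 \geq 2\phi(L) + 1$. Together with $M^2 \geq 2r - 2$ from \eqref{eq:E12'}, the Hodge index theorem reads
\[ (2\phi(L)+1)(2r-2) \leq D^2 M^2 \leq (D.M)^2 \leq (2\phi(L)-1)^2, \]
which simplifies to $4\phi(L)(\phi(L) - r) \geq 2r - 3$. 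Since $\phi(L) \geq r \geq 10$ by \eqref{eq:E2a}, this is immediately contradictory when $\phi(L) = r$; for $\phi(L) > r$ one closes the case by sharpening $M^2 \geq 2r$ using $M_{|C} \geq A$ (Lemma \ref{useful3}) together with $\deg A = c + 2r$, and by exploiting the equality criterion in Lemma \ref{cliffhanger} which, if attained, forces $h^0(D_i) \geq r+1$ and thus a stronger lower bound on $D^2$.

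For the ``in particular'' claim, combine \eqref{eq:E11} with Lemma \ref{cliffhanger} applied to $L + K_S \sim D + (M + K_S)$ (both summands have $h^0 \geq 2$ since $D^2, M^2 > 0$ by Lemma \ref{bigger0}) to obtain $c_2(\F) - 2\rk\F \leq -2h^1(D) - \length(\tau) \leq 0$. Suppose for contradiction that $c_2(\F) - 2\rk\F = 0$; then $h^1(D) = \length(\tau) = 0$ and the equality criterion in Lemma \ref{cliffhanger} forces $h^0(\O_C(D)) = h^0(D)$ and $h^0(\O_C(M+K_S)) = h^0(M+K_S)$, with both line bundles computing $\Cliff C$. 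By the definition of Clifford dimension $r$, this gives $h^0(D), h^0(M+K_S) \geq r+1$, whence $D^2, M^2 \geq 2r$ by Riemann--Roch. Now recall $D \geq 2E$ from the construction of $s$ via Lemma \ref{usefulr10}, so $D \sim 2E + (D-2E)$ with $(D-2E)^2 = D^2 - 4E.D$; a numerical bound on $E.D = \phi(L) - E.M$ together with $D^2 \geq 2r$ yields $(D-2E)^2 \geq 2$, hence $h^0(D-2E) \geq 2$ by Riemann--Roch, producing a forbidden decomposition of $D$.

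The hardest step will be the tight numerical bookkeeping: closing the first part uniformly for all $\phi(L) \geq r$, and verifying $(D-2E)^2 \geq 2$ in the extremal case of the second part. Both hinge on exploiting every bit of slack between $\phi(L) \geq r$, $c \leq 2\phi(L)-3$, $D.M \leq c+2$, and $M.L \geq c + 2r$, and may require choosing the auxiliary half-pencil $E$ so as to minimize $E.D$ (equivalently, maximize $E.M$) in the final decomposition.
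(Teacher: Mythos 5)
Your argument for the first assertion has a genuine gap. The Hodge--index inequality you arrive at, $(2\phi(L)+1)(2r-2) \leq D^2M^2 \leq (D.M)^2 \leq (2\phi(L)-1)^2$, is equivalent to $4\phi(L)(\phi(L)-r) \geq 2r-3$, which is violated only when $\phi(L)=r$; for $\phi(L) \geq r+1$ it holds automatically (the left side is at least $4r+4$), so there is no contradiction, and nothing in the setup forces $\phi(L)=r$ --- \eqref{eq:E2a} gives only $\phi(L) \geq r$. The sharpenings you propose do not close this: $M_{|C} \geq A$ together with $D.M \leq c+2$ gives back only $M^2 \geq 2r-2$ (to reach $M^2 \geq 2r$ you would need $h^1(D+K_S)=0$, whereas Lemma \ref{bigger0} only gives $h^1(D+K_S) \leq 1$), and the equality criterion in Lemma \ref{cliffhanger} yields $h^0(D_i) \geq r+1$ only when equality is attained, while in the strict case you gain just $+1$ on $D_1.D_2$, which is nowhere near enough. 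The paper's proof is structurally different, and the difference is essential: it measures the moving pieces $D_i$ against $\phi(M)$ rather than $\phi(L)$. Using the decomposition $M \sim P+N$ (with $P$ an elliptic pencil realizing $P.M=2\phi(M)$) and the inequalities \eqref{eq:E12} and \eqref{eq:E13} established just before the lemma, one gets $c+2 \geq D.M = P.D+N.D \geq 2c+4-4\phi(M)$, hence $4\phi(M) \geq c+2$; a decomposition $D \sim D_1+D_2$ into moving classes then gives $D.M \geq 4\phi(M) \geq c+2 \geq D.M$, forcing equality throughout, and the contradiction is one of parity: $2\phi(L)=c+3$ makes $c$ odd while $4\phi(M)=c+2$ makes $c$ even. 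Nothing in your proposal plays the role of the bound $4\phi(M) \geq c+2$ or of the parity clash.

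The second assertion is handled correctly in outline and along the same lines as the paper ($c_2(\F)-2\rk\F=0$ forces $D.M=c$, $h^1(D)=\length(\tau)=0$ and $h^0(D)=h^0(\O_C(D)) \geq r+1$ by Clifford dimension, hence $D^2 \geq 2r \geq 20$ and a forbidden moving decomposition of $D$), but your specific choice of auxiliary half-pencil fails: with $E.L=\phi(L)$ you only know $E.D \leq \phi(L)$, and since $\phi(L) \geq r$ while $D^2 \geq 2r$, the quantity $(D-2E)^2=D^2-4E.D$ can be as negative as $-2r$. You must instead take a half-pencil $E'$ with $E'.D=\phi(D) \leq \sqrt{D^2}$, which gives $(D-2E')^2=D^2-4\phi(D) \geq D^2-4\sqrt{D^2}>0$ for $D^2 \geq 20$, hence $h^0(D-2E') \geq 2$. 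Note finally that this second part rests logically on the first, so the gap above must be repaired before it stands.
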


\begin{proof}
From Lemma \ref{bigger0}, (\ref{eq:E12}) and (\ref{eq:E13}) we get
\begin{eqnarray*}
    c+2 & \geq & D.M = P.D + N.D \geq (c+3 -P.N) +(c+1 -P.N) = 2c+4-2P.N \\
        & =    & 2c+4-4\phi(M),
\end{eqnarray*}
whence 
\begin{eqnarray}
\label{eq:E14}
4\phi(M) \geq c+2, \hs & & \mbox{and if equality holds, then} \\
\nonumber & &  P.(N+D) =2\phi(L) =c+3 \hs \mbox{and} \hs N.(P+D) =c+1.
\end{eqnarray}
Now if $D \sim D_1 +D_2$ with $h^0(D_1) \geq 2$ and $h^0(D_2) \geq 2$ we have $D_i.M \geq 2\phi(M)$, whence $D.M \geq 4\phi(M) \geq c+2$. By Lemma \ref{bigger0} we must have equalities both places, whence by (\ref{eq:E14}) we have $P.(N+D) =2\phi(L) =c+3$ and $N.(P+D) =c+1$. But $2\phi(L) =c+3$ implies that $c$ is odd and $4\phi(M) = c+2$ implies that $c$ is even, a contradiction.

Finally, in view of Lemma \ref{bigger0}, assume to get a contradiction that $c_2 (\F) - 2\rk \F =0$. Then, by Lemmas \ref{cliffhanger} and \ref{useful3}, $c=\Cliff D_{|C}$ and $h^0(D_{|C})=h^0(D), h^1(D) = 0$, and, since $A$ computes the Clifford dimension, we must have  $h^0(D) \geq r+1$, whence $D^2 \geq 2r \geq 20$, and $D$ would have a decomposition into two moving classes.
\end{proof}

By Lemma \ref{decmov} we get $h^0(D-2E)=1$, whence from (\ref{eq:E8}) and Lemma \ref{usefulr10} we find
$h^0(\F(-2E)) \geq h^0(\E(-2E)) - h^0(D-2E) \geq 3$.
After saturating we find an exact sequence 
\begin{equation*}
0 \hpil \O_S(D_1) \hpil \F \hpil \F_1 \hpil \tau_1 \hpil 0,
\end{equation*}
where $D_1 \geq 2E$, $\F_1$ is locally free of rank $r-1 \geq 9$, is globally generated off a finite set 
and $\tau_1$ is a torsion sheaf supported on a finite set. From Lemma \ref{useful3} we find $h^0(\F_1^*)=h^1 (\F_1^*)=0$. 

As in section \ref{sec:r=3} page \pageref{port}, $M_1:= \det \F_1$ is nontrivial and base-component free (in particular it is nef and $h^0(M_1) \geq 2$) and $M \sim M_1+D_1$. We have
\[ -1 \geq c_2 (\F) - 2\rk \F = D_1.M_1 -2 + c_2 (\F_1) - 2\rk \F_1 + \length(\tau_1), \]
whence $D_1.M_1 + c_2 (\F_1) - 2\rk \F_1 \leq 1$.

If $M_1^2=0$ then $M_1 \sim mP_1$ for an elliptic pencil $|P_1|$ and an integer $m \geq 1$. By \cite[Prop.3.2]{kn2} we have $c_2 (\F_1) - 2\rk \F_1 \geq -2m$, whence $m(P_1.D_1-2) \leq 1$ so that we must have $P_1.D_1=0$ or $2$ (recalling that $P_1.D_1$ is even). Since $P_1.D_1=P_1.M$ and $M^2 >0$ we must have $P_1.D_1=2$. It easily follows that, as $P_1 = 2G$ for some $G$, we can write $M \sim (\frac{1}{4}M^2)P_1+E_1$ for some $E_1 >0$ satisfying $E_1^2=0$. Since $D^2 >0$ by Lemma \ref{bigger0}, we have $E_1.D \geq 1$ and, by (\ref{eq:E2b}), $P_1.D=P_1.L-P_1.M \geq 2\phi(L)-2 \geq c+1$, whence by (\ref{eq:E12'})
\[ M.D= (\frac{1}{4}M^2)P_1.D+E_1.D \geq \frac{9}{2}(c+1)+1 > 4c+5, \]
contradicting the fact that $M.D \leq c+2$ by Lemma \ref{bigger0}.

Hence $M_1^2 >0$, so that by \cite[Prop.3.2]{kn2} we have $c_2 (\F_1) - 2\rk \F_1 \geq -2$, whence 
\begin{equation}
\label{eq:E1233}
D_1.M_1 \leq 3
\end{equation}
and, by the Hodge index theorem, $D_1^2M_1^2 \leq 9$.
This gives $D_1^2 \leq 4$ whence, by (\ref{eq:E12'}) and (\ref{eq:E1233}), $18 \leq M^2 = M_1^2+D_1^2+2M_1.D_1 \leq M_1^2 + 10$, giving $M_1^2 \geq 8$.
As $D_1 \geq 2E$ and $M_1$ is nef, we deduce from (\ref{eq:E1233}) that $E.M_1 = 1$ and we can write $M_1 = (\frac{1}{2}M_1^2) E + F$ for some $F > 0$ with $F^2 = 0, E.F=1$.
By Lemma \ref{cliffhanger} we have $D_1.(M_1 + D) \geq c$, whence by (\ref{eq:E1233}), $D_1.D \geq c - 3$. Now, using $M_1 = (\frac{1}{2}M_1^2) E + F$, $M_1^2 \geq 8$ and $\phi(L) \geq 10$, we get
\[ M_1.D = M_1.L - M_1.M = M_1.L - M_1^2 - M_1.D_1 \geq \frac{1}{2}M_1^2 (\phi(L) -2) + \phi(L) - 3 \geq 39 \]
whence Lemma \ref{bigger0} gives the contradiction
\[ c + 2 \geq D.M = D.M_1 + D.D_1 \geq 36 + c. \]

This concludes the proof of the case $r \geq 10$ and therefore also the proof of Theorem \ref{thm:noexc}.

\section{$W^1_4$ on curves of genus $9$}
\label{sec:w14}

In this section we extend to the case $L^2=16$ the result in \cite[Prop.4.15]{kl1}.

\begin{prop} 
\label{casi14,16}
Let $L$ be a base-point free line bundle on an Enriques surface with $L^2 = 16$ and 
$\phi(L) = 2$. Let $C$ be a general smooth curve in $|L|$. Then $W^1_4(C)$ is smooth. 
\end{prop}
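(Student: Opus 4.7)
Since $\phi(L) = 2$, $|L|$ is base-point free by \cite[Thm.4.4.1]{cd}; the general $C \in |L|$ is smooth of genus $g = 9$, and by \cite[Thm.1.3]{kl1} has $\gon C = 2\phi(L) = 4$. In particular $C$ is non-hyperelliptic, so $W^2_4(C) = \emptyset$, and every $A \in W^1_4(C)$ has $h^0(A) = 2$, $h^1(A) = 6$. The Petri map
\[
\mu_0(A) \colon H^0(A) \otimes H^0(\omega_C \otimes A\i) \to H^0(\omega_C)
\]
has source of dimension $12$ and target of dimension $9$, so it can never be injective. By the standard identification $T_A W^1_4(C) = (\Im \mu_0(A))^{\perp} \subseteq H^1(\O_C)$, smoothness at $A$ is equivalent, once $W^1_4(C)$ is shown to be zero-dimensional, to surjectivity of $\mu_0(A)$.

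My plan is to classify $W^1_4(C)$ using Lemma \ref{lemma:16,1,4} and then to verify Petri surjectivity at each point. By that lemma, every $A \in W^1_4(C)$ is of one of three types: (i) $A = \O_C(P)$ for an elliptic pencil $P$ with $P.L = 4$; (ii) $A = \O_C(D)(-x-y)$ with $|D|$ base-component free, $D^2 = 2$, $D.L = 6$, and $x, y$ the two base points of $|D|$ on $S$; or (iii) $A = \O_C(D)(-Z_4)$ with $L \eqv 2D$, $D^2 = 4$. For general $C$ I would rule out type (ii) by noting that, as $D$ varies in its finitely many classes with $D^2 = 2$ and $D.L = 6$, the base points $x, y$ describe a finite subset $\Sigma \subset S$, and since $|L|$ is base-point free the general $C \in |L|$ avoids $\Sigma$. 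Type (iii) requires the Picard-lattice condition $L \eqv 2D$ with $D^2 = 4$; if it fails it is excluded, and if it holds it is to be handled by an analysis parallel to type (i). After this reduction $W^1_4(C) = \{A_P = \O_C(P)\}$ is the finite set indexed by the (finitely many) elliptic pencils $P$ with $P.L = 4$; in particular it is zero-dimensional.

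The core step is then to show $\mu_0(A_P)$ is surjective for each $P$. My approach is to exploit the surface structure via the commutative diagram obtained from the restriction sequences $0 \to M - L \to M \to M|_C \to 0$ for $M = P$ and $M = L + K_S - P$, relating $\mu_0(A_P)$ on $C$ to the multiplication map
\[
H^0(\O_S(P)) \otimes H^0(\O_S(L + K_S - P)) \to H^0(\O_S(L + K_S))
\]
on $S$. The restriction $H^0(P) \to H^0(A_P)$ is an isomorphism (both are $2$-dimensional, $h^0(P - L) = 0$), so the image of $\mu_0(A_P)$ is controlled by the $S$-multiplication together with the extra sections in $H^0(\omega_C \otimes A_P\i)$ coming from $H^1(\O_S(-P))$. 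An equivalent and probably cleaner formulation uses the Lazarsfeld--Mukai bundle $\E(C, A_P)$ of \eqref{eq:eca}, reinterpreting failure of surjectivity as a nontrivial trace-free endomorphism of $\E(C, A_P)$.

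The main obstacle is precisely this final surjectivity check. The numerics $h^0(A) \cdot h^1(A) = 12 > 9 = g$ preclude any Petri injectivity argument, so surjectivity must be extracted from the geometry. On a K3 surface, the analogous result follows from simplicity of the Lazarsfeld--Mukai bundle via Lazarsfeld's standard argument; on an Enriques surface the nontriviality of $K_S$ (although $2K_S \sim 0$) forces careful tracking of the $K_S$-twist throughout, so one needs extra vanishings of cohomology groups of $\E(C, A_P)$ and its $K_S$-twist, which reduce via \eqref{eq:eca} to explicit vanishings on $S$ that can be established from $L^2 = 16$ and $\phi(L) = 2$.
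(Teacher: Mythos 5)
Your setup is sound: reducing to zero-dimensionality of $W^1_4(C)$ plus surjectivity of $\mu_0$ at each point is exactly right, and your classification of the $g^1_4$'s is consistent with what the paper imports wholesale from \cite[Prop.4.15]{kl1} (which already gives uniqueness of the $g^1_4$, namely $(2E)_{|C}$ for the unique genus one pencil $|2E|$ with $E.L=2$; your type (iii) is in particular never realized for general $C$, a point you leave unresolved). The genuine gap is the final surjectivity check, which is the entire content of the proof and which you explicitly defer. Your proposed route --- simplicity of the Lazarsfeld--Mukai bundle plus ``explicit vanishings on $S$'' --- is the wrong tool here: Lazarsfeld's simplicity argument yields \emph{injectivity} of the Petri map, which as you yourself observe is numerically impossible ($h^0(A)h^1(A)=12>9=g$), and surjectivity of $\mu_0$ is not a statement that reduces to cohomology vanishings of $\E(C,A)$ on the surface. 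Concretely, since $A$ is a base-point-free pencil, the base-point-free pencil trick identifies $\ker\mu_0$ with $H^0(K_C-2A)$, so non-surjectivity is equivalent to $h^0(K_C-2A)\geq 4$, i.e.\ to the existence of a $g^3_8$ on $C$ with $2A + (K_C - 2A) = K_C$. Ruling this out is a Brill--Noether statement about $C$, not a vanishing on $S$.

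The paper's proof of that last step uses two substantial inputs that are absent from your plan: first, $\Cliff C = 2$ (Corollary \ref{cliff}) forces the putative $|K_C-2A|$ to be a base-point-free $g^3_8$, and if it is not very ample one obtains a base-point-free complete $g^2_6$, which is excluded by Proposition \ref{g^2_6particolari} --- a two-page argument proved earlier in the paper precisely for the pairs $(L^2,\phi(L))=(14,3)$ and $(16,2)$; second, if $|K_C-2A|$ is very ample, then $C$ is a complete intersection of a quadric and a quartic in $\PP^3$, hence canonically a quadric section of a Del Pezzo surface, which contradicts \cite[Prop.5.14]{klGM}. Neither the $g^2_6$ non-existence nor the Gaussian-map/Del Pezzo input is something you can expect to recover from the numerics $L^2=16$, $\phi(L)=2$ alone, so as written your proposal does not close.
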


\begin{proof}
By \cite[Prop.4.15]{kl1} there is a unique genus one pencil $|2E|$ on $S$ such that $E.L=2$ and there is a unique $g^1_4$ on $C$, 
namely $A = (2E)_{|C}$. As in the proof of  \cite[Prop.4.15]{kl1} we need to prove that the multiplication map 
$\mu_{0, A} : H^0(A) \otimes H^0(K_C-A) \to H^0(K_C)$ is surjective. If not,
by the base-point-free pencil trick, we have $h^0(K_C - 2A) \geq 4$. 
Since $\deg(K_C - 2A) = 8$ and $\Cliff(C) = 2$ by Corollary \ref{cliff} we see that $|K_C - 2A|$ 
is a base-point free $g^3_8$ on $C$. 
If $|K_C - 2A|$ is not very ample, there are two points $P, Q \in C$ such that
$|K_C - 2A - P - Q|$ is a $g^2_6$, that must be base-point free (since $\Cliff(C) = 2$), contradicting Proposition \ref{g^2_6particolari}.
Therefore $|K_C - 2A|$ is very ample and $C$ is isomorphic to a smooth curve of degree
$8$ and genus $9$ in $\PP^3$, hence to a smooth complete intersection of an irreducible quadric and
a quartic surface in $\PP^3$. Hence $C$, in its canonical embedding, is isomorphic to a quadric
section of a Del Pezzo surface, contradicting \cite[Prop.5.14]{klGM}. 
\end{proof}

\end{document}